\newtheorem*{maintheorem}{Main Theorem}
\newtheorem{prop}{Proposition}[section]
\newtheorem{lemma}[prop]{Lemma}
\newtheorem{theorem}[prop]{Theorem}
\theoremstyle{remark}
\newtheorem{remark}[prop]{Remark}
\numberwithin{equation}{section}
\begin{document}

\author{Johannes Jaerisch and Hiroki Takahasi}

\address{Graduate School of Mathematics, Nagoya University,
Furocho, Chikusaku, Nagoya, 464-8602, JAPAN} 
\email{jaerisch@math.nagoya-u.ac.jp}
\urladdr{http://www.math.nagoya-u.ac.jp/~jaerisch/}

\address{Keio Institute of Pure and Applied Sciences (KiPAS), Department of Mathematics,
Keio University, Yokohama,
223-8522, JAPAN} 
\email{hiroki@math.keio.ac.jp}
\urladdr{\texttt{http://www.math.keio.ac.jp/~hiroki/}}

\subjclass[2010]{11K55, 37D25, 37D35, 37D40}
\thanks{{\it Keywords}: dimension theory; mixed multifractal spectra; neutral periodic points; non-uniformly expanding Markov maps, backward continued fraction expansion, Bowen-Series maps}

\title[Mixed multifractal spectra of Birkhoff averages]{
Mixed multifractal spectra of Birkhoff averages\\ for non-uniformly expanding one-dimensional Markov maps
with countably many branches
}
 \date{\today}
 
 \maketitle
   \begin{abstract} 
  For a Markov map of an interval or the circle
   with countably many branches and 
   finitely many neutral periodic points,
  we establish conditional variational formulas for the mixed multifractal spectra of Birkhoff averages of countably many
  observables,
  in terms of the Hausdorff dimension of invariant probability measures. 
Using our results, %these formulas to describe the Hausdorff dimension of Besicovitch-Eggleston
  %sets prescribing the frequency of visits to elements of the Markov partition, 
  %and as a particular case, to determine the Hausdorff dimension of sets with
 % frequency defects.
%Further applications of 
we are able to exhibit new fractal-geometric results for  backward continued fraction expansions of real numbers, answering in particular a question of Pollicott. Moreover, we establish formulas for  multi-cusp winding spectra for the Bowen-Series maps associated with finitely generated free Fuchsian groups with parabolic elements.
%  We present number-theoretic and geometric examples to which our results apply,
 % and uncover their new multifractal aspects.
   \end{abstract}

   %   I cannot find an essential difficulty in generalizations to conformal maps (higher dimension).
 
%\tableofcontents
  \section{Introduction}

       Borel's normal number theorem states that Lebesgue almost every real number has the property
       that the limiting frequency of each digit in the decimal expansion is $1/10$. 
       It is therefore natural to ask for the Hausdorff dimension of the set of real numbers
       with a different limiting behavior. This problem was investigated by
       Besicovitch \cite{Bes34} and Eggleston \cite{Egg49} in the 30s and 40s, who computed the Hausdorff dimension of the set of
       real numbers whose digits have limiting frequency given 
       by any prescribed probability vector.

 This number-theoretic problem can be generalized as follows.
             Given a dynamical system on a metric space $X$, 
      a sequence $A_1,\ldots, A_k$ of pairwise disjoint subsets of $X$, and
       a probability vector $\boldsymbol\alpha=(\alpha_1,\ldots,\alpha_k)$,
          is it possible to describe the Hausdorff dimension of the set of initial points whose orbits visit each $A_j$ with
       frequency $\alpha_j$? One can also consider a frequency of visits to a countably infinite number of 
       subsets, and even more generally, Birkhoff averages of 
       a countably infinite number of observables.
       The problem is to describe
       {\it mixed multifractal spectra
        of Birkhoff averages}, or simply {\it mixed Birkhoff spectra} of countably many observables.
%From \cite[p.1597]{Ols03}: 

Mixed Birkhoff spectra combine different local characteristics which depend simultaneously on 
%various different aspects of the underlying dynamical systems: 
families of observables.
%In \cite{Ols03} even uncountable families of functions are considered.}
     For certain maps having a Markov structure, several results are known which describe mixed 
Birkhoff spectra:
             Fan and Feng \cite{FanFen00}, and Olsen \cite{Ols03,Ols04}
            on finite shift spaces;
                Fan et al. \cite{FLMW10} on the Hausdorff dimension of Besicovitch-Eggleston sets on infinite shift spaces;
               Fan et al. \cite{FanJorLiaRam16,FLM10} on uniformly expanding Markov interval maps with infinitely many branches.

  In this paper, we 
  describe mixed Birkhoff spectra for
   non-uniformly expanding one-dimensional Markov maps with a countable (finite or infinite) number of branches. 
    The lack of uniform expansion is due to the existence of neutral periodic points.
  Various types of results in different settings on  dimension theory for such
       {\it parabolic systems} 
        (i.e., maps with neutral periodic points) are fairly in abundance. We refer to        \cite{GelRam09,Iom10,JorRam11,KesStr04,Nak00,PolWei99,Yur02} for results on Lyapunov spectra, 
        \cite{MauUrb00,MauUrb03,Urb96} on the Hausdorff dimension of limit sets,  %of iterated %function systems; 
        \cite{Cli13,Hof10,JJOP10} on Birkhoff spectra of observables not necessarily related to Lyapunov exponents, and  \cite{TakVer03} on entropy spectra.
      %  and mixed Birkhoff spectra were not considered in neither of these papers.
          However, to our knowledge,
        there is no known result on the mixed Birkhoff spectra of parabolic systems,
        in spite of its potential of broad applications.

        %assume the existence of Markov structure, and 
        %are concerned with descriptions of Lyapunov spectra
        %Moreover, symbolic representations of maps are
        %often assumed to be the full shift
    %    Even for non-mixed Birkhoff spectra (single observables),  results are few  \cite{JJOP10,JorRam}

\subsection{Statement of the main result}
%We introduce our setting and results in more precise terms.
Let $M=[0,1]$ or $M=\mathbb{S}^1$, and let $S$ be a % \textcolor{red}{totally ordered} 
   countable set.
   {\it A Markov map} is a map $f\colon\varDelta\to M$ such that the following holds:

 \begin{itemize}
 
\item[(M0)]
   There exists a family $\{\varDelta_a\}_{a\in S}$ 
of connected subsets of $M$ with pairwise disjoint interiors
%\footnote{
%%\textcolor{red}{If we request that the partition elements are disjoint, and also (M3) (M4), then
% the Bowen-Series map is not covered as the following observation shows: a partition element $\varDelta_a$
% such that ${\rm cl}(\varDelta_a)\setminus\varDelta_a$ contains a neutral fixed point
% is attached a hyperbolic index according to (M3), but $\inf|Df_a|=1.$ A solution
% to this problem is to allow two partition elements to intersect each other at their boundaries,
% and allow a single neutral periodic point to have two indices.}
%  \textcolor{red}{Even if one point in $x\in\varDelta$ belongs to two partition elements $\varDelta_a$ and $\varDelta_b$,
%$fx$ makes sense because we declare in the first place that $f$ is a map defined on $\varDelta$:
%$f_a(x)=f_b(x)$ automatically.}}
such that $\varDelta=\bigcup_{a\in S}\varDelta_a$
and $f|_{\varDelta_a}=f_a$ for each $a\in S$.

% \item[(M1)] For each $a\in S$, $f_a$ is a $C^1$ diffeomorphism onto its image
% with appropriate one-sided derivatives.
 
 \item[(M1)] For each $a\in S$, $f_a$ 
is a $C^1$ diffeomorphism onto its image with bounded derivative.
 \item[(M2)] If $a,b\in S$ and $f\varDelta_a\cap \varDelta_b$ 
 has non-empty interior,
 then $f\varDelta_a\supset \varDelta_b$.
 
 \end{itemize}
 The family  $\{\varDelta_a\}_{a\in S}$  is called {\it a Markov partition} of $f$. 
 We say a Markov map $f$ is {\it fully branched}  $f\varDelta_a\supset\varDelta_b$ 
 holds for all $a\in S$, $b\in S$.
  We say $f$ 
 is {\it non-uniformly expanding} if
 $|f'|>1$ everywhere except for finitely many indices $(a,x_a)$, $a\in S$,
   for which $x_a\in\varDelta_a$ and $|f_a'x_a|=1$.
   Such indices $a$ are called {\it neutral indices},
   and $x_a$ is called a {\it neutral periodic point} if $f^nx_a=x_a$ holds for some $n\geq1$.
  All other indices are called {\it expanding}.
Let $\Omega$ denote the set of all neutral indices.
In the case $\Omega=\emptyset$, $f$ is called {\it uniformly expanding}.
 All neutral periodic points are topologically repelling.
  %\end{remark}
 % \begin{remark}\label{rem2}
 % If $a\in S$ is an expanding index and $\varDelta_a$ is not closed, then we may have $\inf_{\varDelta_a}|f'|=1$.

      Condition (M2) determines a transition matrix over $S$.
       We will assume a strong connectivity of the
       associated directed graph, called {\it  finite irreducibility}.
    Moreover, we will  assume that $f$ has {\it uniform decay of cylinders}. We refer to Section \ref{CM} for the definitions of these two key properties.

       We aim to describe a fractal structure of the maximal invariant set
       $$J= \bigcap_{n=0}^\infty f^{-n}\varDelta.$$
      We assume $J\neq\emptyset$, and
     decompose $J$ into {\it level sets} defined as follows.
    Let $\mathcal{F}$ denote
    the set of $\mathbb R$-valued functions on $\varDelta$ with  suitable bounds on their distortion (see Section \ref{TF} for the precise definition).
    We say $f$ has {\it mild distortion} if $\log|f'|\in\mathcal F$.
      Let us introduce the following two infinite-dimensional vector spaces
      $$\mathcal{F}^{\mathbb N}=\{\boldsymbol\phi=(\phi_1,\phi_2,\ldots)\colon\phi_i\in\mathcal F\ \
      \text{$\forall i\geq1$}\}$$
      and
      $$\mathbb R^{\mathbb N}=\{\boldsymbol\alpha=(\alpha_1,\alpha_2,\ldots)\colon\alpha_i\in\mathbb R\ \
       \forall i\geq1\}.$$
      For $\boldsymbol\phi\in\mathcal F^{\mathbb N}$, $\boldsymbol\alpha\in\mathbb R^{\mathbb N}$ and an integer $k\geq1$,
     put  $\boldsymbol\phi_k=(\phi_1,\ldots,\phi_k)$, $\boldsymbol\alpha_k=(\alpha_1,\ldots,\alpha_k)$ and 
      $$\|\boldsymbol\alpha_k\|=\max_{1\leq i\leq k}|\alpha_i|.$$ 
    For an integer $n\geq1$ and $x\in\bigcap_{j=0}^{n-1}  f^{-j}\varDelta$, we denote the multi-Birkhoff sum by
     $$S_n\boldsymbol\phi_k(x)=\left(S_n\phi_1(x),\ldots,S_n\phi_k(x)\right),$$
      where
    $S_n\phi=\sum_{j=0}^{n-1}\phi\circ f^j$ for $\phi\colon \varDelta\to\mathbb R$. The level sets we consider are given by 
    $$B(\boldsymbol\phi,\boldsymbol\alpha)=\bigcap_{k=1}^\infty B_k(\boldsymbol\phi,\boldsymbol\alpha)$$ where we have set 
    $$B_k(\boldsymbol\phi,\boldsymbol\alpha)=\left\{x\in J\colon\lim_{n\to\infty}\left\|\frac{1}{n}
    S_n\boldsymbol\phi_k(x)-\boldsymbol\alpha_k\right\|=0\right\}.$$
Note that $B(\boldsymbol\phi,\boldsymbol\alpha)$ is the set of points in $J$ for which the Birkhoff average of each $\phi_i$ exists and is equal to $\alpha_i$.
We have the multifractal decomposition
\begin{equation*}\label{decomp}
J=\left(\bigcup_{\boldsymbol\alpha\in\mathbb R^{\mathbb N}} B(\boldsymbol\phi,\boldsymbol\alpha)\right)\cup B'(\boldsymbol\phi),\end{equation*}
where $B'(\boldsymbol\phi)$ denotes the set of points in $J$ for which the Birkhoff 
average of some $\phi_i$ does not exist. If $f|_J$ is transitive, then each non-empty level set
$B(\boldsymbol\phi,\boldsymbol\alpha)$ is a dense subset of $J$.
Our goal is to describe the {\it mixed multifractal spectrum of Birkhoff averages}
$$b_{\boldsymbol\phi}(\boldsymbol\alpha)={\rm dim}_HB(\boldsymbol\phi,\boldsymbol\alpha),\quad \boldsymbol\alpha \in \mathbb{R}^{\mathbb{N}},$$ where ${\rm dim}_H$ denotes the Hausdorff dimension on $M$. The metric on $M$ is the Euclidean metric if $M=[0,1]$, and
 the standard Riemannian metric if $M=S^1$.

All measures appearing in this paper are probabilities on appropriate Borel sigma-fields.
For each measure $\mu$ which is invariant under 
the map $f|_J\colon J\to J$,
 denote by $h(\mu)\in[0,\infty]$ the \emph{Kolmogorov-Sina{\u\i} entropy} of $\mu$ with respect to $f|_J$.
If $f$ is differentiable at $\mu$-almost every point, then define {\it the Lyapunov exponent} of $\mu$ 
 by $\chi(\mu)=\int\log|f'|d\mu\in[0,\infty]$.
Let $\mathcal M(f)$ denote the set of $f|_J$-invariant measures with finite Lyapunov exponent.
{\it The dimension} of a measure $\mu\in\mathcal M(f)$ is
the number $$\dim(\mu)=\begin{cases}
\displaystyle{\frac{h(\mu)}{\chi(\mu)}}&\ \text{ if }\chi(\mu)>0;\\
0&\ \text{ if }\chi(\mu)=0.\end{cases}$$  
A measure $\mu\in\mathcal M(f)$ is called {\it expanding} if  $\chi(\mu)>0$.
For an ergodic expanding measure $\mu$, 
$\dim(\mu)$ is equal to the supremum of the Hausdorff dimensions of sets
with full $\mu$-measure \cite{Hof95,HofRai92,Led81,MauUrb03}.

Our formula for $b_{\boldsymbol\phi}(\boldsymbol\alpha)$ is given in terms of the dimension of measures.
 Important quantities are
 $$\delta_0=\sup\{\dim(\mu)\colon\mu\in\mathcal M(f)\}$$
 and $$\beta_\infty=\inf
\left\{\beta\in\mathbb R\colon \sup\{F_\beta(\mu)\colon\mu\in\mathcal M(f)\}<\infty\right\}.$$
Here, $F_\beta\colon\mu\in\mathcal M(f)\mapsto
h(\mu)-\beta\chi(\mu)$ is
{\it a free energy with inverse temperature} $\beta$.
 %put $$ \mathcal M_{\rm %e}(\boldsymbol\phi_k)=\{\mu\in\mathcal M_{\rm e}(f)\colon
%\text{$\phi_i\in L^1(\mu)\ $ $1\leq \forall i\leq %k$}\}.$$
%For a measure $\mu\in \mathcal M_{\rm %e}(\boldsymbol\phi_k)$ 
%it is convenient to write
For $\boldsymbol\phi\in\mathcal F^{\mathbb N}$ and $k\geq1$,
we denote  $$\int\boldsymbol\phi_k d\mu=\left(\int\phi_1d\mu,\ldots,\int\phi_kd\mu\right),$$ where we will always assume that $\phi_i\in L^1(\mu)$ for every $1\leq i\leq k$, if this notation is in use.
 %Then $\beta_\infty\leq\delta_0$ holds.
For $\alpha\in[0,\infty]$ define
$$L(\alpha)=\left\{x\in J\colon\lim_{n\to\infty}\frac{1}{n}\log|(f^n)'x|=\alpha\right\}.$$
We are now in the position to state our main result. 
%\textcolor{green}{regarding "compactly supported": shall we introduce a symbol $\mathcal{M}_c(f)$ and write that "in all theorems" we can replace $M$ by $M_c$?}
Let us say that $f$ is {\it saturated by expanding measures}, or simply {\it saturated} if
   \begin{equation}\label{dim}
\dim_HJ=\delta_0.\end{equation}

\begin{maintheorem}[Conditional variational formulas for mixed Birkhoff spectra]
Let $f\colon\varDelta\to M$
 be a finitely irreducible non-uniformly expanding Markov map which 
has mild distortion and uniform decay of cylinders.
   Let $\boldsymbol\phi\in\mathcal{F}^{\mathbb N}$ and $\boldsymbol\alpha\in\mathbb R^{\mathbb N}$.
  Then the following holds:
  \smallskip

 \noindent {\rm (a)}     We have $B(\boldsymbol\phi,\boldsymbol\alpha)
\neq\emptyset$  if and only if for any integer $k\geq1$
 and any $\epsilon>0$ there exists 
a measure
  $\mu\in\mathcal M(f)$  such that 
   $$\left\|\int\boldsymbol\phi_kd\mu-\boldsymbol\alpha_k \right\|<\epsilon.$$
 
  \noindent {\rm (b)} %(Conditional variational formula)}  
  Assume $f$ is saturated by expanding measures.
%there exists a hyperbolic measure with dimension arbitrarily close to $\dim_HJ$.
 If $B(\boldsymbol\phi,\boldsymbol
 \alpha)
\neq\emptyset$ then 
\begin{align*}b_{\boldsymbol\phi}(\boldsymbol\alpha)
&=\lim_{k\to\infty}\dim_HB_k(\boldsymbol\phi,\boldsymbol\alpha)\\
&=\lim_{k\to\infty}
   \lim_{\epsilon\to0}\sup\left\{\dim(\mu)\colon \mu\in\mathcal M(f),\
   \left\|\int\boldsymbol\phi_k d\mu-\boldsymbol\alpha_k\right\|<\epsilon\right\}.
%   &=\lim_{k\to\infty}
 %  \lim_{\epsilon\to0}\sup\left\{\dim(\mu)\colon \mu\in\mathcal M_c(f), \
  % \left\|\int\boldsymbol\phi_k d\mu-\boldsymbol\alpha_k\right\|<\epsilon\right\}.
\end{align*}
If moreover each $\phi_i$ is bounded, then 
\begin{align*}b_{\boldsymbol\phi}(\boldsymbol\alpha)
&=\lim_{k\to\infty}\lim_{\epsilon\to0}
   \sup\left\{\sup\left\{\dim(\mu)\colon \mu\in\mathcal M(f),\
   \left\|\int\boldsymbol\phi_k d\mu-\boldsymbol\alpha_k\right\|<\epsilon\right\},\beta_\infty\right\}.
\end{align*}
 %\noindent{\rm (c)} 
%Assume $f$ is saturated by expanding measures. Then $$ \dim_HJ=\delta_0=\sup \left\{ \dim(\mu)\colon \mu\in\mathcal M(f)\right\}.$$
If moreover $f$ has a neutral periodic point, then
\begin{align*}
\delta_0=\dim_HL(0)=\lim_{\alpha\to0}\dim_HL(\alpha).\end{align*}
\end{maintheorem}
%\textcolor{red}{$\max\to\sup$, to cover the case $\beta_\infty=-\infty$.}

%Main inspirations for the Main Theorem is
Note that the two limits  in the conditional variational formulas can be exchanged because of the monotonicity of the supremum in  $\epsilon$ and $k$. Further, we may replace $\mathcal M(f)$ by the subset  of measures with compact support.

Our Main Theorem allows us to investigate  a number of important multifractal spectra  which could not be studied before. As a first application we state new fractal-geometric  results for backward continued fractions in Section 1.2. 
%In Section 2 we establish a formula for dimension spectra of Besicovitch-Eggleston sets.  
In Section 2 we describe cusp winding spectra for hyperbolic surfaces of  finitely generated free Fuchsian groups with parabolic elements. 
%We note that the underlying Renyi map is fully-branched non-uniformly expanding with countably many full branches. 

Moreover, our Main Theorem extends the results of Fan et al. \cite[Theorem 1.1, Theorem 1.2]{FanJorLiaRam16}
 in which only uniformly expanding fully branched  interval maps were considered. Note that our  class $\mathcal F$ of observables is strictly larger than the one considered in \cite{FanJorLiaRam16}. Let us also remark that our proof differs significantly from that of   \cite{FanJorLiaRam16}. We refer to Section \ref{method} for further details.
 
Let us finally comment on the assumptions in our Main Theorem. The saturation assumption \eqref{dim}, which  seems very natural  for our task, has also been assumed in \cite{JJOP10}, and it follows from the assumptions in \cite{FanJorLiaRam16}. We state a sufficient condition in Section \ref{dist-cont} via inducing schemes. The uniform decay of cylinders clearly holds for uniformly expanding maps, and for non-uniformly expanding maps we have to state it as an assumption. A number of concrete examples has this property, see \cite[Section 8]{MauUrb03} and Section \ref{vdecay} for details.

 \subsection{Dimension spectra for  backward continued fraction expansions}
We describe our results in some number-theoretic problems.
Each irrational number $x\in(0,1)\setminus\mathbb Q$ has a unique 
%{\it Backward Continued Fraction (BCF)} 
expansion 
   \begin{equation}\label{expansion}x=1-\cfrac{1}{b_1(x)-\cfrac{1}{b_2(x)-\cfrac{1}{\ddots}}},\end{equation}
    where each digit $b_j(x)$ is an integer greater than or equal to $2$. 
     The digits in this expansion are generated by iterating the R\'enyi map \cite{Ren57}
     \begin{equation*}f\colon x\in[0,1)\mapsto\frac{1}{1-x}-\left\lfloor\frac{1}{1-x}\right\rfloor\in[0,1).\end{equation*}
        This means that for all $x\in(0,1)\setminus\mathbb Q$,
      $$b_j(x)=\left\lfloor\frac{1}{1-f^{j-1}x}\right\rfloor+1\quad \forall j\geq1.$$  
      The graph of the R\'enyi map can be obtained from that of the Gauss map %$x\in(0,1]\to 1/x-\lfloor1/x\rfloor$
      by reflecting the latter in the line $x=1/2$. For this reason, \eqref{expansion} is called the {\it
      Backward Continued Fraction} (BCF) expansion of the irrational number $x$.
       The R\'enyi map
    is a fully branched non-uniformly expanding Markov map having $x=0$ as a unique neutral fixed point.
    It satisfies all the assumptions in the Main Theorem (see Section 5 and the proof of Lemma \ref{lalpha}).

%    This expansion is generated by the R\'enyi map \cite{Ren57},
 %   %(see Section \ref{BCF1} for the definition)
  %  which is a non-uniformly expanding Markov map
   % with a neutral fixed point.
    
    The behavior of the arithmetic mean of the BCF digits is peculiar.
Aaronson \cite{Aar86} proved that the arithmetic mean convergences to $3$ in measure
as $n\to\infty$.
Aaronson and Nakada \cite{AarNak03} proved that
$$\liminf_{n\to\infty}\frac{1}{n}\sum_{j=1}^nb_j(x)=2\ \text{ and }\ 
\limsup_{n\to\infty}\frac{1}{n}\sum_{j=1}^nb_j(x)=\infty$$
for Lebesgue a.e. $x\in(0,1)\setminus\mathbb Q$.
Slightly modifying the proof of the Main Theorem we show that
some mixed Birkhoff spectra including the arithmetic mean 
of the BCF digits are completely flat.

\begin{theorem}[Completely flat mixed Birkhoff spectrum]\label{degenerate}
Let $f$ be the R\'enyi map. For any $\alpha\in[2,\infty]$
and any $\boldsymbol\phi=(\phi_i)_{i=1}^\infty\in\mathcal F^{\mathbb N}$ 
such that $\phi_i$ is bounded for every $i\geq1$, we have
%\[
 %\dim_H \left\{x\in [0,1)\colon \lim_{n\to\infty}\frac{1}{n}\#\{1\leq j\leq n\colon b_j(x)=2\}=1,\
 %\lim_{n\to\infty}\frac{1}{n}\sum_{j=1}^nb_j(x)=\alpha
  %\right\}=1.
%\]
  \[
 \dim_H \left\{
  \begin{tabular}{l}
  \vspace{5pt}
 $x\in (0,1)\setminus\mathbb Q\colon
 \displaystyle{\lim_{n\to\infty}\frac{1}{n}S_n\phi_i(x)=\phi_i(0)}\ \ \forall i\geq1$,\\
 \vspace{5pt}
 $\quad\quad\quad\quad\quad\quad\ \displaystyle{ \lim_{n\to\infty}\frac{1}{n}\log |(f^n)'x|=0,}$\\
    $\quad\quad\quad\quad\quad\quad\  \displaystyle{  \lim_{n\to\infty}\frac{1}{n}\sum_{j=1}^nb_j(x)=\alpha}$\\
  \end{tabular}\right\}=1.\]
\end{theorem}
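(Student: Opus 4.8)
\emph{Plan of proof.} The first step is to recast the set in question as a level set to which the Main Theorem applies. Let $\psi_0\colon\varDelta\to\mathbb R$ be the digit function $\psi_0(x)=\lfloor 1/(1-x)\rfloor+1$; it is constant on each cylinder $\varDelta_a$ (equal to $a$), hence has vanishing distortion and lies in $\mathcal F$ despite being unbounded, and $b_j(x)=\psi_0(f^{j-1}x)$, so $\tfrac1n\sum_{j=1}^n b_j(x)=\tfrac1n S_n\psi_0(x)$. Likewise $\tfrac1n\log|(f^n)'x|=\tfrac1n S_n\log|f'|(x)$, and $\log|f'|\in\mathcal F$ because the R\'enyi map has mild distortion. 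Listing $\phi_1,\phi_2,\dots$ together with $\log|f'|$ and $\psi_0$ into one $\boldsymbol\psi\in\mathcal F^{\mathbb N}$, and the targets $\phi_i(0)$, $0$, $\alpha$ into one $\boldsymbol\beta$, the set in the theorem equals, up to the countably many rational points of $J$ (which do not affect the Hausdorff dimension), the level set $B(\boldsymbol\psi,\boldsymbol\beta)$.

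Second, I would note that the conditions on the $\phi_i$ are automatic once the Lyapunov average vanishes. Since $\log|f'|=-2\log(1-\cdot)\ge 0$, with equality only at the neutral fixed point $0$, Chebyshev's inequality shows that $\tfrac1n S_n\log|f'|(x)\to 0$ forces the orbit of $x$ to lie in any prescribed neighbourhood of $0$ with frequency one; since each $\phi_i$ is bounded and (being in $\mathcal F$) continuous at $0$, this gives $\tfrac1n S_n\phi_i(x)\to\phi_i(0)$. The same estimate shows that $|\int\phi_i\,d\mu-\phi_i(0)|$ is small whenever $\chi(\mu)$ is small. Hence $B(\boldsymbol\psi,\boldsymbol\beta)$ equals the level set for the two observables $(\log|f'|,\psi_0)$ with targets $(0,\alpha)$, and it remains to show this level set has Hausdorff dimension $1$.

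Third, for $\alpha\in[2,\infty)$ I would invoke the Main Theorem. The R\'enyi map satisfies its hypotheses and is saturated (established in Section~5; cf.\ the proof of Lemma~\ref{lalpha}), so $\delta_0=\dim_HJ=1$; since $0$ is a neutral fixed point, the last assertion of part~(b) together with part~(b) applied to the single observable $\log|f'|$ with target $0$ gives $\sup\{\dim(\mu)\colon\mu\in\mathcal M(f),\ \chi(\mu)<\epsilon\}\to 1$ as $\epsilon\to 0$; moreover, by the remark after the Main Theorem, the near-maximizers $\mu$ may be taken with compact support, so that $\int\psi_0\,d\mu<\infty$ and, since $\dim(\mu)>0$, also $\int\psi_0\,d\mu>2$. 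Given $\eta,\epsilon>0$ I fix such a $\mu$ with $\chi(\mu)<\epsilon/2$, $\dim(\mu)>1-\eta$ and put $\gamma=\int\psi_0\,d\mu$. Using that entropy and Lyapunov exponent are affine, I adjust $\int\psi_0$ by convex combinations: combining $\mu$ with the Dirac mass $\delta_0$ at the neutral fixed point (for which $h=\chi=0$, $\int\psi_0=2$) keeps the dimension equal to $\dim(\mu)$ for any combination weight $<1$ while letting $\int\psi_0$ decrease continuously from $\gamma$ toward $2$; combining $\mu$ with the Dirac mass at an $f$-fixed point $x_N\in\varDelta_N$ (for which $h=0$, $\chi\asymp 2\log N$, $\int\psi_0=N$) lets $\int\psi_0$ increase, the weight needed to reach a given target being $O(1/N)$, so the added Lyapunov exponent is $O((\log N)/N)$ and the dimension is perturbed by $o(1)$ as $N\to\infty$. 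Either way one obtains $\nu\in\mathcal M(f)$ with $|\int\psi_0\,d\nu-\alpha|<\epsilon$, $\chi(\nu)<\epsilon$ and $\dim(\nu)>1-2\eta$. Since $\eta,\epsilon$ are arbitrary, part~(a) (applied to these measures) gives $B(\boldsymbol\psi,\boldsymbol\beta)\ne\emptyset$, and the first (general) formula of part~(b) gives $\dim_H B(\boldsymbol\psi,\boldsymbol\beta)=1$; the reverse inequality $\le 1$ is trivial.

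Finally, for $\alpha=\infty$ the target set is not a level set over $\mathbb R^{\mathbb N}$, so I would instead rerun the Moran/Cantor construction underlying the lower bound in the proof of the Main Theorem, fed with a sequence $\nu_m\in\mathcal M(f)$ produced by the same convex-combination recipe — now tuning the weight $s_m$ on the atom at $x_{N_m}$ so that $s_mN_m\to\infty$ while $s_m\log N_m\to 0$ — so that $\dim(\nu_m)\to 1$, $\chi(\nu_m)\to 0$ and $\int\psi_0\,d\nu_m\to\infty$; shadowing $\nu_1,\nu_2,\dots$ over sufficiently fast-growing time windows yields a set whose points have vanishing Lyapunov average, digit average tending to $\infty$, and (by the second step) each $\phi_i$-average equal to $\phi_i(0)$, of Hausdorff dimension $\liminf_m\dim(\nu_m)=1$. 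I expect the main obstacle to be exactly this simultaneous control present in the last two steps: pushing the integral of the \emph{unbounded} observable $\psi_0$ to the prescribed value in $[2,\infty]$ while keeping the Lyapunov exponent near $0$ and the dimension near $\delta_0=1$. The mechanism that makes it work — and that has to be made quantitative, together with the choice of time windows when $\alpha=\infty$ — is that the integral of an unbounded observable can be shifted by an arbitrary amount by adding a vanishingly small mass of an atomic measure, which, having zero entropy and a controlled Lyapunov exponent, perturbs the ratio $\dim=h/\chi$ only negligibly.
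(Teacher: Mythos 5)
Your overall strategy is the paper's: augment the observable list with $\log|f'|$ and the digit function, manufacture measures of dimension close to $1$ and small Lyapunov exponent whose digit-integral can be tuned to any value in $[2,\infty]$ by convex combination with Dirac masses at the neutral fixed point $0$ and at the fixed point in $\varDelta_N$, feed these into the lower-bound machinery (Proposition \ref{lowd} via the Main Theorem), and handle $\alpha=\infty$ by a modified Moran construction. This is exactly what Section 6.1 does (Lemmas \ref{lalpha} and \ref{seq} and the convex combinations $\nu_j=t_j\mu_{p(j)}+(1-t_j)\delta_1$); the only real difference of route is how the small-Lyapunov-exponent input is obtained (you cite the last assertion of Main Theorem(b), while Lemma \ref{seq} deduces $\chi(\mu_p)\to0$ indirectly from the strict monotonicity of the Lyapunov spectrum).

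The one genuine weak point is your second step. Membership in $\mathcal F$ does not imply continuity at $0$ --- $\mathcal F$ is defined only by $D_1(\phi)<\infty$ and $D_n(\phi)=o(n)$ --- so ``bounded and (being in $\mathcal F$) continuous at $0$'' is not a valid inference, and your Chebyshev argument as written does not yield $\frac{1}{n}S_n\phi_i(x)\to\phi_i(0)$, nor $\int\phi_i\,d\mu\approx\phi_i(0)$ for $\chi(\mu)$ small. The claims themselves are true, but the proof must go through the cylinders $\varDelta_{1^m}=[0,\frac{1}{m+1})$ containing $0$: the condition $D_m(\phi_i)=o(m)$ gives $\sup_{\varDelta_{1^m}}|S_m\phi_i-m\phi_i(0)|=o(m)$, and combining this with $f$-invariance (write $\int\phi_i\,d\mu=\frac{1}{m}\int S_m\phi_i\,d\mu$) and with $\mu(\varDelta\setminus\varDelta_{1^m})\le C_m\chi(\mu)$ repairs the measure version; the pointwise version is similar, using that every visit to a small neighbourhood of $0$ initiates a long sojourn in $\varDelta_1$. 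This step is load-bearing for you because your interpolating measures $\nu$ are not close to $\delta_0$ in general (e.g.\ when $\alpha=\int\psi_0\,d\mu$ no interpolation happens at all), whereas the paper sidesteps the issue entirely: its measures $\nu_j=t_j\mu_{p(j)}+(1-t_j)\delta_0$ have $t_j\to0$, so $\int\phi_i\,d\nu_j\to\phi_i(0)$ follows from boundedness of $\phi_i$ alone. Either repair step 2 as above, or rearrange the interpolation so that the weight on the non-atomic part tends to $0$ (first push $\int\psi_0$ above $\alpha$ with a tiny mass at $x_N$, then pull down to $\alpha$ with a mass at $\delta_0$ tending to $1$), as the paper does.
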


 The following proposition is a special case of Theorem \ref{degenerate}. 
The Birkhoff spectrum 
of the arithmetic mean of the BCF digits is completely flat, in sharp contrast to 
that of the 
regular continued fraction expansion digits, which 
is a  strictly increasing real-analytic function \cite{IomJor}.
\begin{prop}[Completely flat Birkhoff spectrum]\label{degen}
For any $\alpha\in[2,\infty]$,
\[
 \dim_H \left\{x\in (0,1)\setminus\mathbb Q\colon 
 \lim_{n\to\infty}\frac{1}{n}(b_1(x)+\cdots+b_n(x))=\alpha
  \right\}=1.
\]\end{prop}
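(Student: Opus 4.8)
The plan is to derive Proposition~\ref{degen} directly from Theorem~\ref{degenerate} by making a trivial choice of the family of observables. First I would take $\boldsymbol\phi=(\phi_i)_{i=1}^\infty$ to be the zero sequence, $\phi_i\equiv 0$ for every $i\geq1$. Each $\phi_i$ is bounded and belongs to $\mathcal F$, since the zero function satisfies any distortion bound trivially, so $\boldsymbol\phi\in\mathcal F^{\mathbb N}$ is an admissible choice in Theorem~\ref{degenerate}; moreover $\phi_i(0)=0$, so the condition $\lim_{n\to\infty}\frac1n S_n\phi_i(x)=\phi_i(0)$ is satisfied by every $x\in(0,1)\setminus\mathbb Q$ and imposes no restriction.

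With this choice, the set whose Hausdorff dimension Theorem~\ref{degenerate} evaluates is
\[
E_\alpha'=\Bigl\{x\in(0,1)\setminus\mathbb Q\colon \lim_{n\to\infty}\tfrac1n\log|(f^n)'x|=0,\ \lim_{n\to\infty}\tfrac1n\sum_{j=1}^n b_j(x)=\alpha\Bigr\},
\]
and the theorem asserts $\dim_H E_\alpha'=1$. Since plainly
\[
E_\alpha'\subset E_\alpha:=\Bigl\{x\in(0,1)\setminus\mathbb Q\colon\lim_{n\to\infty}\tfrac1n\bigl(b_1(x)+\cdots+b_n(x)\bigr)=\alpha\Bigr\},
\]
monotonicity of Hausdorff dimension gives $\dim_H E_\alpha\geq\dim_H E_\alpha'=1$. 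On the other hand $E_\alpha\subset(0,1)\subset\mathbb R$, so $\dim_H E_\alpha\leq1$ holds automatically, and combining the two bounds yields $\dim_H E_\alpha=1$, which is exactly the statement of Proposition~\ref{degen}.

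I expect no genuine obstacle in this deduction itself; all the substance is contained in Theorem~\ref{degenerate}, whose proof in turn rests on the Main Theorem together with a separate argument controlling the unbounded digit observable $x\mapsto\lfloor 1/(1-x)\rfloor+1$ near the neutral fixed point $x=0$. The only point in the above that warrants a moment's thought is that the zero sequence is indeed an element of $\mathcal F^{\mathbb N}$ with bounded entries, which is immediate from the definition of $\mathcal F$.
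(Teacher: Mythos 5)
Your deduction is correct and is exactly the paper's intended route: the paper simply declares Proposition \ref{degen} a special case of Theorem \ref{degenerate}, and your choice of the zero sequence for $\boldsymbol\phi$ together with the inclusion $E_\alpha'\subset E_\alpha$ and the trivial upper bound $\dim_H E_\alpha\le 1$ is the standard way to make that precise. No issues.
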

%In fact, a stronger statement holds.
%See Theorem \ref{degenerate}.

%\textcolor{red}{An important property of the  Renyi map is that it is saturated. This follows from Proposition \ref{verify}. }
For each $n\geq2$ consider the set
   \begin{equation*}
   E(n)=\{x\in[0,1)\colon b_j(x)\leq n\ \ \forall j\geq1\}.    
   \end{equation*}
By a result of Urba\'nski (\cite[Theorem 4.3]{Urb96}), the Hausdorff dimension of
$E(n)$ coincides with the first zero $t_n\in[0,1]$ of the geometric pressure function associated with the Markov map generated by  $n-1$ branches of the R\'enyi map.
Since the R\'enyi map has no invariant Borel probability  measure that is absolutely continuous with respect to the Lebesgue measure, we have
 $t_n<1$ by a  result of Ledrappier \cite{Led81b}. Since 
 %by Lemma \ref{lalpha} 
 the R\'enyi map is saturated by expanding measures, and since the support of any invariant measure with compact support is contained in $E(n)$, for some $n\ge 2$, our Main Theorem proves the following, 
  answering 
a question of Pollicott (private communication in the Fall Program of Low-Dimensional Dynamics, Shanghai, October 2019).
\begin{prop}[Limit of dimension of sets with bounded BCF digits]\label{limit-d}
We have
$$\displaystyle{\lim_{n\to\infty}}\dim_HE(n)=1.$$
\end{prop}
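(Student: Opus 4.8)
The plan is to deduce Proposition~\ref{limit-d} from the Main Theorem by identifying $\lim_{n\to\infty}\dim_HE(n)$ with the quantity $\delta_0=\sup\{\dim(\mu)\colon\mu\in\mathcal M(f)\}$ and then showing that the latter equals $1$. First I would record the monotonicity $E(2)\subset E(3)\subset\cdots$ and hence the existence of the limit $d_\infty:=\lim_{n\to\infty}\dim_HE(n)\le 1$. Since $E(n)$ is the maximal invariant set of the subsystem of the R\'enyi map generated by the $n-1$ branches $\varDelta_2,\ldots,\varDelta_n$, this subsystem is finitely irreducible, non-uniformly expanding, has mild distortion and uniform decay of cylinders (all being inherited from the R\'enyi map), and is saturated by expanding measures, as asserted in the excerpt. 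Applying the saturation identity \eqref{dim} to each subsystem gives $\dim_HE(n)=\sup\{\dim(\mu)\colon\mu\in\mathcal M(f),\ \mathrm{supp}(\mu)\subset E(n)\}$.

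Next I would pass to the full system. The key structural observation, stated in the excerpt, is that the support of any $f$-invariant Borel probability measure with compact support is contained in some $E(n)$: a compact subset of $[0,1)$ meets only finitely many cylinders $\varDelta_a$, so its points have uniformly bounded digits. Combined with the remark after the Main Theorem that $\mathcal M(f)$ may be replaced by the subset of measures with compact support, this yields
\begin{equation*}
\delta_0=\sup_{n\ge 2}\sup\{\dim(\mu)\colon\mu\in\mathcal M(f),\ \mathrm{supp}(\mu)\subset E(n)\}=\sup_{n\ge 2}\dim_HE(n)=d_\infty.
\end{equation*}
Since the R\'enyi map itself is saturated, $\dim_HJ=\delta_0$, and here $J=[0,1)$ up to the countable (hence zero-dimensional) set of rationals, so $\delta_0=\dim_H[0,1)=1$. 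Therefore $d_\infty=1$.

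Alternatively, and perhaps more transparently, I would invoke the last clause of the Main Theorem: since the R\'enyi map has a neutral periodic point (the fixed point $0$), $\delta_0=\dim_HL(0)$, and one checks directly that $L(0)$ has full Lebesgue measure in $[0,1)$ by the Aaronson--Nakada type ergodic behavior (the Lyapunov exponent vanishes Lebesgue-a.e.\ because the invariant density is infinite), whence $\delta_0=1$; this is essentially the content of the a.e.\ statement behind Theorem~\ref{degenerate}. Either route reduces the proposition to the saturation identity plus the compact-support exhaustion.

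I expect the only genuine subtlety to be the exhaustion step: one must confirm that every $\mu\in\mathcal M(f)$ can be approximated in dimension by measures supported on the $E(n)$, which is exactly where the ``replace $\mathcal M(f)$ by measures with compact support'' remark, together with finite irreducibility, is used; the verification that each truncated subsystem satisfies the hypotheses of the Main Theorem is routine but should be spelled out. Everything else is bookkeeping with monotone limits.
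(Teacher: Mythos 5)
Your argument is correct and coincides with the paper's: the paper deduces the proposition in exactly this way, from the saturation of the R\'enyi map together with the observation that every compactly supported invariant measure is supported in some $E(n)$, so that $\delta_0=\dim_HJ=1$ is exhausted by the sets $E(n)$ via the Main Theorem. The only redundancy is your appeal to saturation of each truncated subsystem: for the needed lower bound $\dim_HE(n)\geq\dim(\mu)$ one only uses that an ergodic expanding measure supported in $E(n)$ satisfies $\dim(\mu)\leq\dim_H(\mathrm{supp}\,\mu)\leq\dim_HE(n)$, so the reverse (saturation) inequality for the subsystems never enters.
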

%\begin{proof}
%By Lemma \ref{lalpha} the R\'enyi map is saturated by expanding measures. By Lemmas 4.5 and 4.6 there exist invariant measures for the R\'enyi map with compact support and Hausdorff dimension arbitrarily close to one. Since the support of each such measure is contained in  $E(n)$, for some $n\in \mathbb{N}$, the result follows. 
%for some $n\ge 2$, the Further, by In the Main Theorem we may replace $\mathcal M(f)$ by the subset %$\mathcal M_c(f)$ 
% of measures with compact support.
 %in the Main Theorem as well as in all other results of this paper.
%Since any invariant measure for the R\'enyi map with compact support is supported on $E(n)$, for some $n\ge 2$, 
%an invariant probability measure for some finite subsystem, 
%we obtain the following from Main Theorem(b),
%\end{proof}

It follows that
the set of irrational numbers in $(0,1)$ with bounded BCF digits has Hausdorff dimension $1$. 
For the regular continued fraction expansion, Jarn\'ik \cite{Jar32} proved that the set of 
irrational numbers in $(0,1)$ whose digits do not exceed $n$ has Hausdorff dimension $1-O(1/n)$.
This was later significantly improved by Hensley \cite{Hen92} to $1-O(1/n^{2}).$
 %implies that 
%the set of irrationals in $(0,1)$ whose BCF expansion digits are bounded has Hausdorff dimension $1$. 
%This extends the result of Jarn\'ik \cite{Jar32} %and Good \cite{Goo41} 
%for the regular continued fraction expansion.
%This also follows from the result of Jarn\'ik and the fact that 

%\begin{proof}
%We have already shown that the R\'enyi map has a expanding measure with dimension arbitrarily close to $1$. It %follows that there exists a sequence $\{\mu_k\}_{k=1}^\infty$ of expanding measures such that %$\lim_{k\to\infty}\dim(\mu_k)=1$ and each $\mu_k$ is supported on a $\sigma$-invariant compact set. In particular, %for each $k$, the support of $\mu_k$ is contained in a finite subsystem.
%Let $A_k\subset\mathbb N\setminus\{0,1\}$ 
%denote the set of symbols appearing in the coding sequences of points in this compact set.
%Then $\dim_HE(\max A_k)\geq\dim(\mu_k)$.
%%Since $\dim(\tilde\mu_{\delta_{n'}\tilde\psi})\to\delta=1$ as $n'\to\infty$,
%we obtain $\lim_{n\to\infty}\dim_HE(n)\geq\lim_{k\to\infty}\dim_HE(\max A_k)=1$.
%\end{proof}

%The following complementary result for the BCF digits is a direct consequence of Theorem \ref{dimthm2}.
To state our next main result let $f\colon\varDelta\to M$ be a Markov map 
with an infinite Markov partition $\{\varDelta_i\}_{i=1}^\infty$. We say that $\boldsymbol\alpha \in {\mathbb R}^{\mathbb N}$ is a  {\it  frequency vector} if
$\alpha_i\geq0$ holds for every $i\ge 1$ and $\sum_{i=1}^{\infty} \alpha_i\leq1$.
For each frequency vector $\boldsymbol\alpha$ we introduce the {\it  Besicovitch-Eggleston set} with frequency $\boldsymbol\alpha$ given by 

  $$BE(\boldsymbol\alpha)=\left\{x\in J\colon \lim_{n\to\infty}\frac{1}{n}
     \#\{0\leq j\leq n-1\colon f^jx\in \varDelta_i\}=\alpha_i
     \ \ \forall i\ge 1\right\}.$$ 
     For the R\'enyi map we have 
 $$BE(\boldsymbol\alpha)=\left\{x\in(0,1)\setminus\mathbb Q\colon \lim_{n\to\infty}\frac{1}{n}\#\{1\leq j\leq n\colon b_j(x)=i\}=\alpha_{i-1}\ \ \forall i\geq2\right\}.$$     
% $$BE(\boldsymbol\alpha)=\left\{x\in J\colon \lim_{n\to\infty}\frac{1}{n}
 %    \#\{0\leq j\leq n-1\colon f^jx\in A_i\}=\alpha_i
 %    \ \ \forall i\geq1\right\}.$$
    % In the case $\#\mathscr{A}=\infty$, we further
   % $$BE(\boldsymbol\alpha)=\bigcap_{k=1}^\infty BE_k(\boldsymbol\alpha).$$ 
   %  In the case $\mathscr{A}$ is an infinite Markov partition $\{\varDelta_i\}_{i=1}^\infty$ of the map $f$,
  %   we will write $BE(\boldsymbol\alpha)$ for $BE_{\{\varDelta_i\}_{i=1}^\infty}(\boldsymbol\alpha)$. 
%  The sets $BE(\boldsymbol\alpha_k)$, $BE(\boldsymbol\alpha)$ are called
 
 \begin{theorem}[Dimension of Besicovitch-Eggleston sets]\label{dimthm}
 Let $f\colon[0,1)\to [0,1)$ be the R\'enyi map. For every frequency vector $\boldsymbol\alpha$ we have 
\[ \dim_HBE(\boldsymbol\alpha)
   =\lim_{k\to\infty}\lim_{\epsilon\to0}
   \max\left\{\sup\left\{\dim(\mu)\colon \mu\in\mathcal M(f),\
   \max_{1\leq i\leq k}\left|\mu(\varDelta_i) -\alpha_i\right|<\epsilon\right\}, \frac{1}{2}\right\}.
 \]
 Moreover, if $\sum_{i=1}^\infty\alpha_i<1$ then we have $$\dim_HBE(\boldsymbol\alpha)=\frac{1}{2}.$$
    \end{theorem}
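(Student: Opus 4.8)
The plan is to realize the Besicovitch--Eggleston sets as level sets $B(\boldsymbol\phi,\boldsymbol\alpha)$ to which the Main Theorem applies, and then to supply two inputs specific to the R\'enyi map: the value $\beta_\infty=1/2$, and an escape-of-mass estimate for the ``moreover'' part. Put $\phi_i=\1_{\varDelta_i}$ for $i\geq1$. Each $\phi_i$ is constant on every cylinder of $f$, hence has vanishing distortion, so $\phi_i\in\mathcal F$; it is bounded, so $\boldsymbol\phi=(\phi_i)_{i\geq1}\in\mathcal F^{\mathbb N}$. Since $\tfrac1nS_n\phi_i(x)=\tfrac1n\#\{0\leq j\leq n-1\colon f^jx\in\varDelta_i\}$ we have $B(\boldsymbol\phi,\boldsymbol\alpha)=BE(\boldsymbol\alpha)$, and since $\int\phi_i\,d\mu=\mu(\varDelta_i)$ we have $\|\int\boldsymbol\phi_k\,d\mu-\boldsymbol\alpha_k\|=\max_{1\leq i\leq k}|\mu(\varDelta_i)-\alpha_i|$. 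The R\'enyi map is fully branched (hence finitely irreducible), non-uniformly expanding, has mild distortion and uniform decay of cylinders, and is saturated by expanding measures, so the Main Theorem is applicable.

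First I would check $BE(\boldsymbol\alpha)\neq\emptyset$ for every frequency vector via criterion (a). Given $k$ and $\epsilon>0$, choose $N>k$ and restrict $f$ to the still fully branched sub-Markov map on the finite alphabet $\{1,\dots,k,N\}$; let $\mu$ be the $f$-invariant measure obtained by pushing forward the Bernoulli measure with weights $(\alpha_1,\dots,\alpha_k,1-\sum_{i\leq k}\alpha_i)$ on the corresponding full shift (discard the symbol $N$ if $\sum_{i\leq k}\alpha_i=1$). Then $\chi(\mu)<\infty$, so $\mu\in\mathcal M(f)$, and $\mu(\varDelta_i)=\alpha_i$ for $1\leq i\leq k$, so criterion (a) is met. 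Applying the Main Theorem~(b) in the bounded case then gives
$$\dim_HBE(\boldsymbol\alpha)=\lim_{k\to\infty}\lim_{\epsilon\to0}\max\Big\{\sup\big\{\dim(\mu)\colon\mu\in\mathcal M(f),\ \max_{1\leq i\leq k}|\mu(\varDelta_i)-\alpha_i|<\epsilon\big\},\ \beta_\infty\Big\}.$$
It remains to evaluate $\beta_\infty$ for the R\'enyi map. With $\varDelta_a=[1-\tfrac1a,1-\tfrac1{a+1})$ one has $f_a'(x)=(1-x)^{-2}$, so $\inf_{\varDelta_a}|f_a'|=a^2$ and $\sup_{\varDelta_a}|f_a'|=(a+1)^2$; the standard thermodynamic-formalism computation then shows $\sup_{\mu\in\mathcal M(f)}(h(\mu)-\beta\chi(\mu))<\infty$ precisely when $\sum_{a}a^{-2\beta}<\infty$, i.e.\ when $\beta>1/2$. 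Hence $\beta_\infty=1/2$, which is the constant appearing in the statement; this proves the first displayed formula.

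For the ``moreover'' part, suppose $s:=\sum_{i\geq1}\alpha_i<1$. Since $\max\{\cdot,\tfrac12\}$ commutes with the monotone double limit above, it suffices to prove $\lim_{k\to\infty}\lim_{\epsilon\to0}\sup\{\dim(\mu)\colon\max_{i\leq k}|\mu(\varDelta_i)-\alpha_i|<\epsilon\}\leq\tfrac12$. Fix $\beta\in(\tfrac12,1)$ and set $P_\beta:=\sup_{\mu\in\mathcal M(f)}(h(\mu)-\beta\chi(\mu))$, which is finite because $\beta>\beta_\infty$; then $\dim(\mu)=h(\mu)/\chi(\mu)\leq\beta+P_\beta/\chi(\mu)$ whenever $\chi(\mu)>0$. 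If $\mu$ satisfies $\max_{i\leq k}|\mu(\varDelta_i)-\alpha_i|<\epsilon$ then $\sum_{i\leq k}\mu(\varDelta_i)<s+k\epsilon$, hence $\mu(\bigcup_{a>k}\varDelta_a)>1-s-k\epsilon$, and since $\inf_{\varDelta_a}\log|f_a'|=2\log a$ this gives $\chi(\mu)\geq 2\log(k+1)(1-s-k\epsilon)$. Letting $\epsilon\to0$ first and then $k\to\infty$ makes this lower bound tend to $\infty$ uniformly over the constraint sets, so $\sup\dim(\mu)\leq\beta+o(1)$; hence $\dim_HBE(\boldsymbol\alpha)\leq\beta$, and letting $\beta\downarrow\tfrac12$ yields $\dim_HBE(\boldsymbol\alpha)\leq\tfrac12$. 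The reverse inequality is immediate from the first formula, so $\dim_HBE(\boldsymbol\alpha)=\tfrac12$.

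The main obstacle is the upper bound in the ``moreover'' part: the escape-of-mass estimate must be run with the nested limits in the right order (the lower bound on $\chi(\mu)$ decays like $k\epsilon$, so $\epsilon\to0$ must precede $k\to\infty$), and it must be combined cleanly with the finiteness of $P_\beta$ for $\beta>\beta_\infty=1/2$. Identifying $\beta_\infty=1/2$ from the asymptotics $|\varDelta_a|\asymp a^{-2}$ is the other genuinely map-specific ingredient; everything else is bookkeeping around the Main Theorem.
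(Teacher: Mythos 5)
Your proposal is correct and follows the same architecture as the paper: realize $BE(\boldsymbol\alpha)$ as the level set of the bounded observables $\1_{\varDelta_i}\in\mathcal F$, apply Main Theorem(b) with $\beta_\infty=1/2$ for the first formula, and prove the ``moreover'' part by an escape-of-mass estimate ($\mu(\bigcup_{i>k}\varDelta_i)>1-s-k\epsilon$ forces $\chi(\mu)\geq 2(1-s-k\epsilon)\log(k+1)\to\infty$ in the order $\epsilon\to0$ then $k\to\infty$), which is exactly the paper's inequality chain. The one place you diverge is the step ``large Lyapunov exponent implies dimension at most $1/2+o(1)$'': the paper obtains this by combining its conditional variational formula for $\dim_HL(\alpha)$ with Iommi's result $\lim_{\alpha\to\infty}\dim_HL(\alpha)=1/2$, whereas you derive it directly from the bound $\dim(\mu)\leq\beta+P_\beta/\chi(\mu)$ with $P_\beta=\sup_\mu(h(\mu)-\beta\chi(\mu))<\infty$ for $\beta>\beta_\infty=1/2$. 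Your route is more self-contained (it avoids the external citation and only uses the finiteness of the free energy above the critical exponent, which you correctly identify from $|\varDelta_a|\asymp a^{-2}$), at the cost of having to justify the standard equivalence between finiteness of $\sup_\mu F_\beta(\mu)$ and convergence of $\sum_a a^{-2\beta}$; you also supply the non-emptiness check via Bernoulli measures on finite sub-alphabets, which the paper leaves implicit.
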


% \begin{prop}[Dimension of zero-frequency set]\label{zero-dim}
%\textcolor{green}{ADD continuity etc of BE spectrum? or write it in words? }
%We have
%\begin{align*}\dim_H\left\{x\in(0,1)\setminus\mathbb Q\colon \lim_{n\to\infty}\frac{1}{n}\#\{1\leq j\leq n\colon b_j(x)=i\}=0\ \ \forall i\geq2\right\}=\frac{1}{2}.\end{align*}
%\end{prop}

     For fully branched uniformly expanding
Markov maps with infinite Markov partitions,
  the conditional variational formulas for the Hausdorff dimension of Besicovitch-Eggleston sets in Theorem \ref{dimthm} were established in \cite{FanJorLiaRam16,FLM10}. See \cite{FLMW10} for analogous results on the countable Markov shift. 
  %For frequencies of visits to families of finitely many sets,
  %a statement analogous to Theorem \ref{dimthm} holds without the constant %$\beta_\infty$.
The second assertion of Theorem \ref{dimthm} extends the result 
  \cite[Theorem 1.2]{FanJorLiaRam16} on uniformly expanding fully branched Markov maps.
%The analogous result for regular continued fractions can be derived  again from Theorem  \ref{dimthm2}. 
This result is closely related to a result of Good \cite{Goo41} stating that  set of numbers with regular continued fraction digits tending to infinity has  Hausdorff dimension equal to $1/2$. For a related result on cutting sequences of geodesics on hyperbolic surfaces with cusps we refer to Proposition \ref{prop:multicuspfreq} in Section 2 and the remark thereafter.
Statements analogous to Theorem \ref{dimthm} hold for general non-uniformly expanding Markov maps with infinite Markov partitions.

%
%If $P_{\beta_0}<\infty$ for some $\beta_0\geq0$, then $P_\beta<\infty$ for every $\beta\geq\beta_0$.
%Since $P_{\delta_0}\leq0$ by definition, $\beta_\infty\leq\delta_0$ holds.
%If $\#S<\infty$ then $\beta_\infty=-\infty$.
%%In the description of multifractal spectra of Birkhoff averages of single functions,
% the thermodynamic formalism (an analogy with statistical mechanics) \cite{Bow75,Rue78}
%has been a powerful approach  \cite{Pes97,PesWei01}:
    % A main tool is a fruitful analogy with statistical mechanics called {\it the thermodynamic formalism} \cite{Bow75,Rue78}:
         %The existence of 
%         Markov partitions allow one to code orbits of
   %   the system by following their histories over the partition of the phase space.
     %This defines a one-dimensional spin system with exponentially decaying interactions,
     % and geometric informations of fractal level sets are encoded in the corresponding 
     % pressure function. 
     %a coding of the system via Markov partitions defines
     %a one-dimensional spin system with exponentially decaying interactions,and 
 %    geometric informations of fractal level sets are encoded in the corresponding 
  %    pressure function. 
   %  Unfortunately, the thermodynamic formalism is not well-adapted
    %to the description of the mixed multifracal
    %spectra of Birkhoff averages, owing to the lack of an analogue
     % of the pressure function suitable for a treatment of multiple functions.
     
     \subsection{Method of proof and outline of the paper}\label{method}
      The proof of the Main Theorem  differs significantly from that of \cite[Theorem 1.1, Theorem 1.2]{FanJorLiaRam16} in which the uniform expansion of the Markov map is essential.  
As stated in the  Main Theorem(b), 
the set of points with zero pointwise Lyapunov exponent has a large Hausdorff dimension,
 and can therefore intersect many level sets.
 It is convenient to restrict to level sets with positive Lyapunov exponents,
 but many level sets will be unaccounted for due to this restriction
 (see e.g. \cite[Theorem 2]{JJOP10}).
 To establish our Main Theorem for the complete multifractal spectra, we take the set with zero Lyapunov exponent into a direct consideration.
 %The difficulty in our setting 
% is that Lyapunov exponents of expanding measures are not bounded away from zero.
%Owing to this, 

What we will actually prove is the conditional variational formula  
for any level set whose Hausdorff dimension does not exceed $\delta_0$.
The saturation  assumption ensures that the formulas are valid for all level sets.
 %Although\UTF{00DA}%
%we do not know any example for which \eqref{dim} fails,
For finitely generated iterated function systems with neutral fixed points, 
essentially the same assumption  was made in \cite[Theorem 2]{JJOP10} 
in order to describe Birkhoff spectra of single observables.
%In the setting of \cite{FanJorLiaRam16}, the assumption \eqref{dim} holds and is used implicitly in the proof.
%A number of concrete examples satisfy \eqref{dim}.
%Indeed, in Section \ref{dist-cont} we show that the existence of a good induced Markov map
%implies \eqref{dim}. 

%In Section 2 we will establish a  formula for the Hausdorff dimension  of Besicovitch-Eggleston sets. 
In Section 2 we will prove related results  for the multi-cusp winding process for the geodesic flow on hyperbolic surfaces with cusps. The rest of this paper consists of four sections. Section 3 contains preliminary results on non-uniformly expanding one-dimensional 
         Markov maps.
In Section 4 we prove the Main Theorem.
%Since there is no good control over this large set, it is necessary to extract 
%a good subset with positive Lyapunov exponent from each level set  which definitely contributes to the dimension.
%Our proof of the Main Theorem  differs significantly from that of \cite[Theorem 1.1, Theorem 1.2]{FanJorLiaRam16} in which the uniform expansion of the Markov map is essential.  
%As stated in Main Theorem(c), 
%the set of points with zero (pointwise) Lyapunov exponent has a large Hausdorff dimension,
% and can therefore intersect many level sets.
% It is convenient to restrict to sets with positive Lyapunov exponents,
% but many level sets will be unaccounted for due to this restriction
% (see e.g. \cite[Theorem 2]{JJOP10}).
% \textcolor{green}{To establish our Main Theorem for the complete multifractal spectrum, we take the set with zero Lyapunov exponent into a direct consideration.}
% is essential for this approach, and we refer the reader to
%Section \ref{bsec} and Section \ref{keylow} for more technical comments on upper and lower bounds respectively,
%on the Hausdorff dimension of level sets.
For the upper bound of the Hausdorff dimension of the level sets,  we modify 
 Bowen's covering argument  \cite{Bow79}, and utilize 
 the thermodynamic formalism
 for finite and countable Markov shifts
 \cite{Bow75,MauUrb01,Rue78} applied to induced systems. 
 %Rather than a single
 %induced system, 
 We will work with
 infinitely many induced systems which altogether exhaust the whole dynamics. 
For the lower bound,
we construct subsets of the level sets by a Moran-like geometric construction \cite{Pes97}, and relate the Hausdorff dimension of the level sets to the dimension of expanding measures with prescribed expected values.

%\textcolor{blue}{
%It is derived from 
%\cite[Theorem 1]{JJOP10} on sets with positive Lyapunov exponent, combinined with 
%the assertion \cite[Lemma 6]{JJOP10} that the set of zero Lyapunov exponents does not count
%if the prescribed Birkhoff average values are properly excluded.
%The excluded values are determined by
%neutral fixed points, and they form  intervals  
%if the map has multiple neutral fixed points. %therefore
%many level sets may be unaccounted for As in Corollary \ref{cor1}, 
%Our method yields a result with only a finite set of excluded values. Compare the following corollary to the Main Theorem with \cite[Theorem 2]{JJOP10}.}
%\textcolor{blue}{\begin{cor}\label{BoSe'}
%Let $f\colon \varDelta \to M$ be
%a non-uniformly expanding fully branched Markov map with a finite Markov partition which  
%has mild distortion and uniform decay of cylinders. Assume $J$ is compact. 
%Let $\phi\colon\varDelta\to\mathbb R$ be a continuous function.
%Then
%$B(\phi,\alpha)\neq\emptyset$ holds
%if and only if $\alpha\in[\min_J\phi,\max_J\phi]$. Moreover, if \eqref{dim} holds then
%for any $\alpha\in[\min_J\phi,\max_J\phi]$
%such that
%$\int\phi d\mu\neq\alpha$ holds for any $f$-invariant measure $\mu$ supported 
%on a single neutral periodic orbit, 
% we have
%      \begin{align*}\dim_HB(\phi,\alpha)
%&=
 %\max\left\{\dim(\mu)\colon\mu\in\mathcal M(f),\ \int\phi d\mu
%  =\alpha\right\}.\end{align*}
%\end{cor}
%Here, $B(\phi,\alpha)$ denotes the set of points in $\varDelta$ for which the Birkhoff average of $\phi$ is equal to $\alpha$. We should also refer to climenhaga at this point???}
In Section 5 we provide  sufficient conditions for two of the assumptions in the Main Theorem: the uniform decay of cylinders,
and  the saturation
by expanding measures.
In Section 6 we show Theorem \ref{degenerate} and Theorem \ref{dimthm}
for the R\'enyi map.
%which implies Proposition \ref{degen}. 

\section{Cusp winding spectra for some hyperbolic surfaces}\label{BowSer}

In this section we apply our main results to cusp winding spectra for the geodesic flow on  hyperbolic surfaces modeled by a finitely generated free Fuchsian group with parabolic elements. The underlying %dynamical system 
non-uniformly expanding Markov map is %derived from 
the Bowen-Series map associated with  the action of a Fuchsian group on the boundary of hyperbolic space (\cite{BowSer79}). This induces a uniformly expanding Markov map with infinitely many branches which is not fully branched. This fact has also been used in \cite{KesStr04}.
%Related results can be found 
Other cusp winding spectra were treated in \cite{JaeKesMun16,Mun12}. 

\subsection{Bowen-Series maps for free Fuchsian groups}
We denote by $(\mathbb D,d)$ the Poincar\'e disk model of the two-dimensional hyperbolic space, where $\mathbb D\subset \mathbb R^2$ is the open unit disk around the origin, and $\mathbb S^1=\partial\mathbb D$ its boundary. We denote by $$\Lambda(G)=\overline{\bigcup_{g\in G}g(0)}\setminus\bigcup_{g\in G}g(0) \subset \mathbb{S}^1$$ the \emph{limit set} of a Fuchsian group $G$ (see \cite{Nic89} and also \cite[Section 1]{BowSer79}  for further details),
%\cite[Section 1]{BowSer79} for the definition)}, 
where the closure is taken with respect to the Euclidean topology on $\mathbb R^2$.

We begin by recalling
the definition of the Bowen-Series map from \cite{BowSer79}. 
Let $R\subset\mathbb{D}$ denote a Dirichlet fundamental domain for
a finitely generated free Fuchsian group $G$ with parabolic elements.
By Poincar\'e's polyhedron theorem, each of the finitely
many sides $s$ of $R$ gives rise to a side-pairing transformation
$g_{s}\in G$, and the set 
\[
G_{0}=\left\{ g_{s}\colon s\text{ }\text{side of }R\right\} \subset G
\]
is a symmetric set of generators of $G$. Moreover,  each vertex $p_{j}$ of $R$ ($j=1,\dots,k)$
satisfies $p_{j}\in\mathbb{S}^{1}$ and that $p_{j}$ is a parabolic
fixed point of some $g\in G$. We will assume for simplicity that
for each $p_{j}$ there exists $\gamma_j\in G_0$ such that $\gamma_j(p_j)=p_j$.  We define 
\[
\Gamma_0=\{ \gamma_j^{\pm 1} \in G_0 \colon 1\le j\le k\}\ \text{ and }\ H_{0}=G_{0}\setminus\Gamma_{0}.
\]
We will always assume that $G$ is non-elementary, which in our setting means that $\# G_0\ge 4$. Each side $s$ of $R$ is contained  in the isometric
circle of $g_s$, where we recall that the isometric circle $C_{g}$ of $g\in G$ is given by 
\[
C_g=\left\{ z\in\mathbb{R}^{2}\colon |g'(z)|=1\right\}.
\]
Here,  $|g'(z)|$ denotes the norm of the derivative of $g$ at $z$ with respect
to Euclidean metric on $\mathbb{R}^{2}$. We then define for $g\in G_0$,
\[
%\varDelta_{g}:=\mathbb{S}^{1}\cap B_{g},
\varDelta_{g}=\mathbb{S}^{1}\cap \left\{ z\in\mathbb{R}^{2}\colon |g'(z)|\ge1\right\}.
\]
%where 
%\[
%B_{g}=\left\{ z\in\mathbb{R}^{2}\colon |g'(z)|\ge1\right\} .
%\]
Let $\varDelta=\bigcup_{g\in G_{0}}\varDelta_{g}$ and define the Bowen-Series
    map $f:\varDelta\to\mathbb S^1$ by 
\[
f|_{\varDelta_{g}}=g|_{\varDelta_{g}},\quad g\in G_{0}.
\]
That $f$ is well defined follows from the fact that, for distinct $g,h\in G_{0}$, 
we have
$\varDelta_{g}\cap \varDelta_{h}=\emptyset$ unless $h=g^{-1}\in\Gamma_{0}$. %$B_{g}\cap B_{h}=\emptyset$ unless $h=g^{-1}\in\Gamma_{0}$.
In this case, we have $\varDelta_{g}\cap \varDelta_{g^{-1}}=\left\{ p_{j}\right\} $ %$B_{g}\cap B_{g^{-1}}=\left\{ p_{j}\right\} $ 
where $p_{j}$ is the parabolic fixed point of $g$, and thus $f(p_{j})=p_{j}$. 

\begin{figure}
\begin{center}
\includegraphics[height=4.5cm,width=4cm]{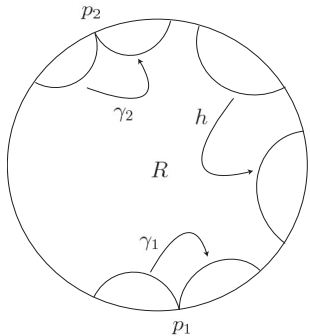}
\caption{A Dirichlet fundamental domain $R$ of a finitely generated free Fuchsian group generated by $G_0=H_0 \cup \Gamma_0$ with $H_0=\{h,h^{-1} \}$ and $\Gamma_0=\{\gamma_1,\gamma_1^{-1},\gamma_2,\gamma_2^{-1}  \}$.}
\end{center}
\end{figure}

Note that $f$ is a Markov map with Markov partition $\{\varDelta_{g}\}_{g\in G_{0}}$.
It is not difficult to verify that $f$ is a finitely irreducible  non-uniformly expanding
Markov map satisfying Renyi's condition and (M3).
Hence, $f$ has uniform decay of cylinders by Lemma \ref{uniform}.
For the maximal $f$-invariant set $J\subset\mathbb{S}^{1}$ we have
\[
J=\Lambda(G).
\]
There exists a subset
 of the set  $\{(\omega_n)_{n=0}^\infty\in G_0^{\mathbb N}
 \colon \omega_{n}\omega_{n+1}\neq1\in G\ \forall n\geq0\}$ 
 of symbolic sequences which is in one-to-one correspondence with $\Lambda(G)$
 (\cite{BowSer79,Ser86}). This correspondence is given by the coding map associated with the Markov map $f$ as defined in Section \ref{CM}.
We denote by $\Lambda_c(G)\subset \Lambda(G)$ the conical limit set of $G$ (see \cite{Nic89} for the definition). By a result of Beardon and Maskit \cite{BeaMask74} we have 
\begin{equation} 
\Lambda_{c}(G)=\Lambda(G)\setminus\bigcup_{g\in G}\bigcup_{j=1}^{k}g^{-1}(p_{j}).\label{eq:beardonmaskit}
\end{equation}

\subsection{Multi-cusp winding process}
 For $x\in \Lambda_c(G)$ it follows from \eqref{eq:beardonmaskit} that the corresponding symbolic sequence $(\omega_n)_{n=0}^\infty\in G_0^\mathbb N$ does not eventually become constant to some element of $\Gamma_0$. For  $x\in \Lambda_c(G)$ we can therefore decompose  its symbolic sequence $\omega$ into a  sequence of blocks $(B_{i}(x))_{i\ge1}$ as in \cite{JaeKesMun16}.
Each hyperbolic generator in $\omega$ forms a block of length one. For parabolic
generators in $\omega$ we build maximal blocks of consecutive appearances of the
same parabolic generator. We have either  $B_i(x)=h$ for some $h\in H_0$, or $B_i(x)=\gamma^n$ for some $\gamma\in \Gamma_0$ and $n\geq1$.

Motivated by \cite{JaeKesMun16}, we introduce
the \emph{multi-cusp winding process} $(a_{i,j})_{i\geq1,1\leq j\leq k}$, a family of functions on $\Lambda_{c}(G)$ given by 
\[
a_{i,j}(x)=\begin{cases}
n-1 & \text{if }B_{i}(x)=\gamma_{j}^{\pm n},\quad n\ge1\text{ }\text{ }\\
0 & \text{otherwise.}
\end{cases}
\]
This process has an interpretation in terms of the geodesic
flow on the associated hyperbolic surface  $\mathbb D/G$  (\cite{Ser86}).
Namely, for a fixed initial value $o\in \mathbb D/G$, the set of oriented geodesics $\gamma:[0,\infty)\rightarrow \mathbb D/G$ which satisfy $\gamma(0)=o$ and which return to some compact set of $\mathbb D/G $ infinitely often, can be identified with $\Lambda_c(G)$. This identification is obtained by lifting $\gamma$ to a geodesic $\tilde{\gamma}:[0,\infty)\rightarrow \mathbb{D}$ with $\tilde{\gamma}(0)\in R$, and identifying $\gamma$ with the endpoint $x:=\lim_{t \to \infty} \tilde{\gamma}(t)\in \mathbb{S}^1$. The  symbolic sequence $\omega$
is then the  cutting sequence which records the sides of $R$ crossed by $\gamma(t)$ as    $t\rightarrow \infty$ (\cite[Section 2]{{Ser86}}). A block $B_i(x)=\gamma_j^n$ of length $n\ge2$ of some
parabolic generator $\gamma_{j}\in \Gamma_0$ then means that the geodesic $\gamma$  spirals
$n-1$ times around the cusp associated with $\gamma_{j}$. Here, the $i$th block refers to the $i$th return of $\gamma$ to a certain compact subset of $\mathbb D/G$.

\subsection{Mixed Birkhoff spectra for multi-cusp winding process}
In order to apply our theory for mixed Birkhoff spectra to the multi-cusp
winding process, we introduce the induced Markov map $\tilde{f}:K\rightarrow\mathbb{S}^{1}$
given by 
\[
\tilde{f}x=f^{|B_{1}(x)|}x,
\]
where  $|B_1(x)|$ denotes the word length of the block $B_1(x)$ and  $K\subset\mathbb{S}^{1}$ is the  countable union of pairwise disjoint
partition elements
\[
K=\bigcup_{\omega\in F}\varDelta_{\omega}.
\]
Here,  $F$ is the  countable set of reduced words in $\bigcup_{n=2}^\infty G_{0}^n$  given
by
\[
F=\bigcup_{n=1}^\infty\left\{ \gamma^{n}g\colon
\gamma\in\Gamma_{0},g\in G_{0}\setminus\left\{ \gamma^{\pm1}\right\} \right\} \cup\left\{ hg\colon h\in H_{0},\,\,g\in G_{0}\setminus\left\{ h^{-1}\right\} \right\} .
\]
%Put $$\tilde J=\bigcap_{n=1}^\infty \tilde f{-n}K.$$
By \eqref{eq:beardonmaskit},
the maximal $\tilde{f}$-invariant set $\tilde{J}$ satisfies
\[
\tilde{J}=\Lambda_{c}(G).
\]
Let 
\[
D_{k}=\left\{ \boldsymbol{\alpha}=(\alpha_{1},\dots,\alpha_{k})\in\mathbb{R}^{k}\colon\alpha_{j}\ge0\quad 1\le \forall j\le k\right\}.
\]
For $\boldsymbol{\alpha}\in D_{k}$ we define 
\[
B(\boldsymbol{\alpha})=\left\{ x\in\Lambda_{c}(G)\colon\lim_{n\rightarrow\infty}\frac{1}{n}(
a_{1,j}(x)+\cdots +a_{n,j}(x))=\alpha_{j}\quad 1\le \forall j\le k\right\} .
\]

\begin{prop}[Birkhoff spectrum of multi-cusp windings]\label{multicusp} For every $\boldsymbol{\alpha}\in D_{k}$
we have $B(\boldsymbol{\alpha})\neq \emptyset$ and 
\[
\dim_{H}B(\boldsymbol{\alpha})=\lim_{\epsilon\to0}\sup\left\{ \dim(\mu)\colon\mu\in\mathcal{M}(\tilde{f}),\ \left|\int a_{1,j}\,d\mu-\alpha_{j}\right|<\epsilon\quad1\le \forall j\le k\right\} .
\]
%Moreover, the map $\boldsymbol\alpha\in D_k\mapsto \dim_HB(\boldsymbol\alpha)$ is continuous.
\end{prop}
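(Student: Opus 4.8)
The plan is to recognize $B(\boldsymbol\alpha)$ as a level set of a Birkhoff average for the induced uniformly expanding Markov map $\tilde f\colon K\to\mathbb S^{1}$, and then to apply the Main Theorem to $\tilde f$. Since $\tilde fx=f^{|B_{1}(x)|}x$, the block decomposition of a point $x\in\Lambda_{c}(G)=\tilde J$ simply shifts under $\tilde f$, that is $B_{i}(x)=B_{1}(\tilde f^{\,i-1}x)$ for every $i\ge1$; consequently $a_{i,j}(x)=a_{1,j}(\tilde f^{\,i-1}x)$, so that $a_{1,j}(x)+\cdots+a_{n,j}(x)=S_{n}\phi_{j}(x)$ with $\phi_{j}:=a_{1,j}$ and $S_{n}$ the Birkhoff sum for $\tilde f$. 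Each $\phi_{j}$ is constant on every element $\varDelta_{\omega}$, $\omega\in F$, of the Markov partition of $\tilde f$, hence lies in $\mathcal F$. Extending $\boldsymbol\phi=(\phi_{1},\dots,\phi_{k},0,0,\dots)$ and $\boldsymbol\alpha=(\alpha_{1},\dots,\alpha_{k},0,0,\dots)$ by zeros makes all conditions indexed by $i>k$ vacuous, so $B(\boldsymbol\alpha)=B_{m}(\boldsymbol\phi,\boldsymbol\alpha)$ for every $m\ge k$, and in particular $B(\boldsymbol\alpha)=B(\boldsymbol\phi,\boldsymbol\alpha)$ in the notation of the Main Theorem.

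Next I would check that $\tilde f$ falls within the scope of the Main Theorem. As recorded in Section \ref{BowSer}, $\tilde f$ is a uniformly expanding Markov map with the countable partition $\{\varDelta_{\omega}\}_{\omega\in F}$; thus its set of neutral indices is empty, the non-uniform expansion hypothesis holds vacuously, and uniform decay of cylinders holds automatically (Section \ref{vdecay}). Finite irreducibility of $\tilde f$ follows from that of $f$ together with the explicit form of $F$, and since $f$ satisfies R\'enyi's condition the induced map $\tilde f$ has uniformly bounded distortion, so $\log|\tilde f'|\in\mathcal F$ and $\tilde f$ has mild distortion. It remains to verify that $\tilde f$ is saturated by expanding measures, i.e.\ $\dim_{H}\tilde J=\delta_{0}$: being a finitely irreducible uniformly expanding Markov map with bounded distortion, $\tilde f$ has the property that $\dim_{H}\tilde J$ is the supremum of $\dim_{H}\tilde J_{F_{0}}$ over finite subalphabets $F_{0}\subset F$, and each finite subsystem carries an ergodic equilibrium measure realizing its dimension, whence $\dim_{H}\tilde J=\delta_{0}$ \cite{MauUrb00,MauUrb03}. (Consistently, $\Lambda(G)\setminus\Lambda_{c}(G)=\bigcup_{g\in G}\bigcup_{j=1}^{k}g^{-1}(p_{j})$ is countable by \eqref{eq:beardonmaskit}, so $\dim_{H}\tilde J=\dim_{H}\Lambda(G)$.)

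I would then first establish non-emptiness via part (a) of the Main Theorem: given $\boldsymbol\alpha\in D_{k}$ and $\epsilon>0$, choose a large integer $P\ge k+1$ with $1/P<\epsilon$ and positive integers $M_{j}$ with $|(M_{j}-1)/P-\alpha_{j}|<\epsilon$ for $1\le j\le k$, and let $\mu$ be the $\tilde f$-invariant measure on the periodic orbit whose block pattern consists of one block $\gamma_{j}^{M_{j}}$ for each $j$ together with $P-k$ filler blocks — hyperbolic generators from $H_{0}$, or, when $H_{0}=\emptyset$, alternating distinct parabolic generators — so arranged that the pattern is not eventually constant and the orbit lies in $\Lambda_{c}(G)$. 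This $\mu$ has compact support, zero entropy and finite Lyapunov exponent, hence $\mu\in\mathcal M(\tilde f)$, and $|\int a_{1,j}\,d\mu-\alpha_{j}|<\epsilon$ for all $j$; since the adjoined zero components are satisfied trivially, part (a) of the Main Theorem yields $B(\boldsymbol\phi,\boldsymbol\alpha)=B(\boldsymbol\alpha)\neq\emptyset$. Applying then part (b) of the Main Theorem to $\tilde f$ and the zero-extended $(\boldsymbol\phi,\boldsymbol\alpha)$, and using that for $m\ge k$ the level set $B_{m}(\boldsymbol\phi,\boldsymbol\alpha)$ coincides with $B(\boldsymbol\alpha)$ and the constraint $\|\int\boldsymbol\phi_{m}\,d\mu-\boldsymbol\alpha_{m}\|$ reduces to $\max_{1\le j\le k}|\int a_{1,j}\,d\mu-\alpha_{j}|$, the outer limit over $m$ in the formula collapses and one arrives at
\[
\dim_{H}B(\boldsymbol\alpha)=\lim_{\epsilon\to0}\sup\left\{\dim(\mu)\colon\mu\in\mathcal M(\tilde f),\ \left|\int a_{1,j}\,d\mu-\alpha_{j}\right|<\epsilon\quad 1\le\forall j\le k\right\},
\]
which is the assertion of the Proposition; the $\beta_{\infty}$-term of the Main Theorem does not enter because the observables $a_{1,j}$ are unbounded.

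The step requiring genuine care is the middle one, namely verifying that the induced Bowen--Series map $\tilde f$ — which has infinitely many branches and is not fully branched — meets all hypotheses of the Main Theorem, with the saturation property the most delicate point. Everything else is structural: once $\tilde f$ is known to belong to the class, the Main Theorem supplies the variational formula, and part (a) together with the periodic-orbit construction above yields the non-emptiness.
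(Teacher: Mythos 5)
Your proposal is correct and follows essentially the same route as the paper: identify $B(\boldsymbol\alpha)$ as a level set of the observables $a_{1,j}$ for the induced map $\tilde f$, verify that $\tilde f$ satisfies the hypotheses of the Main Theorem, and obtain non-emptiness from Main Theorem(a) via measures on periodic orbits (the paper uses convex combinations of measures on the orbits $(\gamma_j^n g\gamma_j^n g\cdots)$, you use one long periodic word encoding all $k$ constraints at once; both work). The only soft spot is your assertion that R\'enyi's condition for $f$ ``automatically'' gives uniformly bounded distortion, hence mild distortion, for $\tilde f$: since the return times are unbounded, the distortion of a branch over a long parabolic block $\gamma^n g$ requires the telescoping estimate near the neutral fixed point (Lemma \ref{scope'}), and the clean way to justify both mild distortion and saturation of $\tilde f$ is to invoke Proposition \ref{verify}(a)--(c), which is exactly what the paper does; your finite-subalphabet argument for saturation is essentially the second half of the proof of that proposition.
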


\begin{proof}
For $i\ge1$ and $1\le j\le k$ we have 
$a_{i,j}=a_{1,j}\circ\tilde{f}^{i-1}.$
Hence, 
$B(\boldsymbol{\alpha})$ is the set of points in $\tilde J$
for which the Birkhoff average of $a_{1,j}$ for $\tilde f$
is equal to $\alpha_j$ for each $1\leq j\leq k$.
%we have that 
%\[
%B(\boldsymbol{\alpha})=\left\{ x\in\tilde{J}\colon\lim_{n\rightarrow\infty}\frac{1}{n}\sum_{i=0}^{n-1}a_{1,j}\circ\tilde{f}^{i}x=\alpha_{j}\quad1\le \forall j\le k\right\} .
%\]
Our main task is
to verify that $\tilde{f}$ is a finitely irreducible, non-uniformly 
expanding Markov map with  a Markov partition $\{\varDelta_{\omega}\}_{\omega\in F}$
which has mild distortion and uniform decay of cylinders, and satisfies \eqref{dim}.
That $\tilde{f}$ is finitely irreducible follows from the fact that
 $f|_J$ is transitive with the finite Markov partition $\{\varDelta_{\omega}\}_{g\in G_0}$. Since $f$ has uniform decay of cylinders, so does $\tilde{f}$. 
%Note that  $B_1$ is constant on each element $\varDelta_\omega$ of the Markov partition. 
%We denote this constant by  $B_1(\varDelta_\omega)$.
%We define the inducing time $\tau:\left\{ (\varDelta_{\omega})_{\omega\in F}\right\} \rightarrow\mathbb{N}$
%by %$\tau(\Delta_{\omega})=B_{1}(x)$ for some/any $x\in\Delta_{\omega}$.
%$\tau(\varDelta_\omega)=B_1(\varDelta_\omega)$ for each $\omega\in F$.
Since $B_1$ is constant on each element of the Markov partition,
 $\tilde f$ satisfies \eqref{dim} by Proposition \ref{verify}(b), and 
 $\tilde f$ has mild distortion by Proposition \ref{verify}(c).
%By the proof of Proposition \ref{verify} and Lemma \ref{g} it follows
%that $(\tilde{f})^{2}$ uniformly expanding and $\log|\tilde{f}'|$
%is H\"older continuous. In particular, $\tilde{f}$ has mild distortion.
%Again by the proof of Proposition \ref{verify} it follows that $\tilde{f}$
%satisfies \eqref{dim}.
Hence, the desired conditional variational formula follows from  Main Theorem(b) for every non-empty level set $B(\boldsymbol{\alpha})$.

For each $n\ge 1$ and $1\le j \le k$ we consider the $\tilde{f}$-invariant probability measure on the periodic orbit $(\gamma_j^n g \gamma_j^n g \cdots)$ of $\tilde f$, where $\gamma_j \in \Gamma_0$ and  $g\in G_0\setminus\{\gamma_j^{\pm1}\}$. By taking suitable convex combinations of these measures and applying Main Theorem(a) we can show that $B(\boldsymbol{\alpha})\neq\emptyset$  for every $\boldsymbol{\alpha}\in D_{k}$.
%One can prove the continuity of the Hausdorff dimension by slightly modifying the proof of Theorem
%\ref{dimthm} using the uniform expansion of $(\tilde{f})^2$ in Proposition \ref{verify}(b),
%see Remark \ref{unifdim}.
\end{proof}
\begin{remark}
In the case $k=1$, our conditional variational formula for the cusp
winding spectrum is analogous to the Birkhoff spectrum of the arithmetic
mean of the regular continued fraction digits considered in \cite[Corollary 6.6]{IomJor}.
Note that in there the dynamical system is given by the Gauss map.
\end{remark}

%\[
%D:=\left\{ \boldsymbol{\alpha}=(\alpha_{i,j})\in\mathbb{R}^{\mathbb{N}_{0}\times\left\{ 1,\dots,k\right\} }\mid\alpha_{i,j}\ge0,\,\,\sum_{i=0}^{\infty}\sum_{j=1}^{k}\alpha_{i,j}\le1\right\} .
%\]

In order to investigate the frequency of multi-cusp windings, we define for $i\ge0$ and $1\le j\le k$
the sets 
\[
A_{i,j}=\left\{ x\in\Lambda_{c}(G)\colon a_{1,j}(x)=i\right\} .
\]
Denote
by $D$ the set of frequency vectors $\boldsymbol{\alpha}=(\alpha_{i,j})\in\mathbb{R}^{\mathbb{N}\times\left\{ 1,\dots,k\right\} }$.
Then  for every $\boldsymbol{\alpha}\in D$ we have 
\[
BE(\boldsymbol{\alpha})
=\left\{ x\in\Lambda_{c}(G)\colon\lim_{n\rightarrow\infty}\frac{1}{n}\#\left\{ 1\le\ell\le n:a_{\ell,j}(x)=i\right\} =\alpha_{i,j},\quad\forall i\ge0,\ 1\le \forall j\le k\right\}.
\]
Hence, the number  $\alpha_{i,j}$ prescribes the asymptotic frequency of precisely
$i$ consecutive windings around the $j$th cusp.

\begin{prop}[Frequency of multi-cusp windings]\label{prop:multicuspfreq}For every  $\boldsymbol{\alpha}\in D$ 
we have $BE(\boldsymbol{\alpha})\neq\emptyset$  and  

 \begin{align*}  &\dim_H BE(\boldsymbol\alpha)=\\
   &\lim_{k\to\infty}\lim_{\epsilon\to0}
   \max\left\{\sup\left\{\dim(\mu)\colon \mu\in\mathcal M(\tilde{f}),\
   \sup_{i\geq0,\,\,1\leq j\leq k}\left|\mu(A_{i,j}) -\alpha_{i,j}\right|<\epsilon\right\},\frac{1}{2}\right\}.
   \end{align*}
%Moreover, for each $k\geq1$ the map $\alpha\in D \mapsto \dim_H BE_k(\boldsymbol\alpha)$ is continuous. 
Moreover, for any  $\boldsymbol \alpha \in D$ satisfying $\sum_{i=1}^\infty \sum_{j=1}^k \alpha_{i,j}<1$ we have 
%the zero-frequency $\bold 0 \in D$ we have 
\[
\dim_H BE(\boldsymbol \alpha)=\frac{1}{2}.
\]

\end{prop}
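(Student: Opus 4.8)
\textbf{Proof proposal for Proposition~\ref{prop:multicuspfreq}.}

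The plan is to apply the Main Theorem to the induced map $\tilde f\colon K\to\mathbb S^1$ exactly as in the proof of Proposition~\ref{multicusp}, noting that $\tilde f$ has already been verified there to be a finitely irreducible non-uniformly expanding Markov map with mild distortion, uniform decay of cylinders, and satisfying \eqref{dim}, with infinite Markov partition $\{\varDelta_\omega\}_{\omega\in F}$ and maximal invariant set $\tilde J=\Lambda_c(G)$. First I would observe that for each fixed $j$ the sets $A_{i,j}$, $i\geq 0$, are each a union of partition elements of $\tilde f$: indeed $a_{1,j}$ is constant on each $\varDelta_\omega$, $\omega\in F$, taking value $n-1$ on $\varDelta_{\gamma_j^n g}$ and $0$ on all other elements. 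Hence $\mathbf{1}_{A_{i,j}}\in\mathcal F$ (indicator functions of unions of partition elements lie in $\mathcal F$, cf. Section~\ref{TF}), and the family $\boldsymbol\phi=(\mathbf 1_{A_{i,j}})_{i\geq0,\,1\leq j\leq k}$, re-indexed as a sequence, belongs to $\mathcal F^{\mathbb N}$ with every component bounded. Since $a_{\ell,j}=a_{1,j}\circ\tilde f^{\,\ell-1}$ and $\#\{1\leq\ell\leq n: a_{\ell,j}(x)=i\}=S_n\mathbf 1_{A_{i,j}}(x)$, the set $BE(\boldsymbol\alpha)$ is precisely the level set $B(\boldsymbol\phi,\boldsymbol\alpha)$ for $\tilde f$, and $\mu(A_{i,j})=\int\mathbf 1_{A_{i,j}}\,d\mu$.

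Next I would establish $BE(\boldsymbol\alpha)\neq\emptyset$ for every frequency vector $\boldsymbol\alpha\in D$, using Main Theorem(a): given $k$ and $\epsilon>0$, I need $\mu\in\mathcal M(\tilde f)$ with $|\mu(A_{i,j})-\alpha_{i,j}|<\epsilon$ for all $i\geq0$, $1\leq j\leq k$. As in Proposition~\ref{multicusp}, I take for each $n\geq1$ and $1\leq j\leq k$ the $\tilde f$-invariant measure $\mu_{n,j}$ on the periodic orbit through $\gamma_j^n g$ (for some fixed $g\in G_0\setminus\{\gamma_j^{\pm1}\}$), which gives mass $1$ to $A_{n-1,j}$; a suitable finite convex combination $\sum c_{n,j}\mu_{n,j}$ plus a small mass on a measure supported on an orbit through an element of $H_0$ (to absorb the $i=0$ component and to handle $\sum\alpha_{i,j}<1$) realizes any prescribed finite collection of frequencies up to $\epsilon$. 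Because we only need to match finitely many components at a time, convex combinations of finitely many periodic measures suffice, and these lie in $\mathcal M(\tilde f)$ with compact support.

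For the dimension formula, Main Theorem(b) applied to $\tilde f$ gives, since every $\phi_i$ is bounded,
\[
\dim_H BE(\boldsymbol\alpha)=\lim_{k\to\infty}\lim_{\epsilon\to0}\sup\left\{\sup\left\{\dim(\mu)\colon\mu\in\mathcal M(\tilde f),\ \sup_{i\geq0,\,1\leq j\leq k}|\mu(A_{i,j})-\alpha_{i,j}|<\epsilon\right\},\beta_\infty\right\},
\]
so the claimed formula follows once I identify $\beta_\infty=\tfrac12$ for $\tilde f$. This is the analogue, for the Bowen--Series induced map, of the computation $\beta_\infty=\tfrac12$ for the R\'enyi map underlying Theorem~\ref{dimthm}; I expect it to follow from the asymptotics of the derivative cylinders of $\tilde f$ (the isometric circles of parabolic powers $\gamma_j^n$ contract like $n^{-2}$, so $\sum_\omega |\varDelta_\omega|^\beta<\infty$ iff $\beta>\tfrac12$), which also yields $\dim_H\Lambda_c(G)\geq\tfrac12$ and the constant value $\tfrac12$ on the "defective" part. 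Finally, for the last assertion, if $\sum_{i,j}\alpha_{i,j}<1$ then no invariant probability measure can have $\mu(A_{i,j})$ close to $\alpha_{i,j}$ for all $i,j$ simultaneously (the masses would sum to less than $1$, contradicting $\sum_i\mu(A_{i,j})=1$ for each $j$ in the limit $k\to\infty$... more precisely the complement $\bigcup_{i}A_{i,j}$ for a single $j$ has full measure), so the inner supremum over $\mu$ degenerates and only the $\beta_\infty=\tfrac12$ term survives, giving $\dim_H BE(\boldsymbol\alpha)=\tfrac12$ exactly as in the second assertion of Theorem~\ref{dimthm}. The main obstacle is the verification that $\beta_\infty=\tfrac12$ for $\tilde f$ together with the corresponding lower bound $\dim_H\Lambda_c(G)\geq\tfrac12$; this rests on precise two-sided estimates for $|\varDelta_{\gamma_j^n g}|$ in terms of $n$, which in turn come from the standard parabolic-cusp geometry of $G$ and the distortion control already built into $\mathcal F$.
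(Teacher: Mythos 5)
Your treatment of the first assertion follows the paper's route exactly: you recast $BE(\boldsymbol\alpha)$ as the level set of the bounded observables $\mathbf 1_{A_{i,j}}\in\mathcal F$ for the induced map $\tilde f$ (whose hypotheses were already checked in Proposition \ref{multicusp}), apply Main Theorem(b), and get non-emptiness from convex combinations of periodic-orbit measures via Main Theorem(a). For the identification $\beta_\infty=1/2$ the paper does not redo the cylinder asymptotics you sketch but simply cites \cite[Lemma 2.4]{JaeKesMun16}; your heuristic is consistent with that lemma, so this part is acceptable modulo the reference.

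The genuine gap is your argument for the final assertion. You claim that when $\sum_{i\ge1}\sum_{j}\alpha_{i,j}<1$ ``no invariant probability measure can have $\mu(A_{i,j})$ close to $\alpha_{i,j}$ for all $i,j$ simultaneously,'' so that the inner supremum is over the empty set. This cannot be the mechanism: the constraint in the variational formula involves only finitely many indices at each stage of the double limit, and by Main Theorem(a) the existence of such measures for every finite truncation and every $\epsilon>0$ is \emph{equivalent} to $BE(\boldsymbol\alpha)\neq\emptyset$ --- which you prove in your preceding paragraph for these very $\boldsymbol\alpha$. So your two paragraphs contradict each other. (Note also that the hypothesis only constrains $\sum_{i\ge1,j}\alpha_{i,j}$, whereas the identity $\sum_{i\ge0}\mu(A_{i,j})=1$ you invoke concerns the $i=0$ terms, about which the hypothesis says nothing.) The correct mechanism, as in the proof of Theorem \ref{dimthm} for the R\'enyi map, is that the supremum degenerates in \emph{value}, not in domain: once sufficiently many constraints are imposed with $\epsilon$ shrinking suitably (e.g.\ $\epsilon(k)<\epsilon_0/k$), any admissible $\mu$ must carry mass at least $c>0$ on the union of the remaining partition elements $\varDelta_{\gamma^n g}$ with $n$ large, where $\inf|\tilde f'|\to\infty$; hence $\chi(\mu)\to\infty$ along the double limit and $\dim(\mu)$ eventually drops below $1/2+\epsilon$ by the analogue of \eqref{atinfty}, while the $\max$ with $\beta_\infty=1/2$ supplies the matching lower bound. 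Carrying this out here requires an extra bookkeeping step absent from your proposal: unlike the $\varDelta_i$ of the R\'enyi map, the sets $A_{i,j}$ do not partition $\Lambda_c(G)$ (the $A_{0,j}$ overlap and contain the bounded-derivative region $\{|B_1|=1\}$), so the tail-mass estimate has to be run on the disjoint family $\{A_{i,j}\}_{i\ge1,\,1\le j\le k}$ together with $\bigcap_j A_{0,j}$.
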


\begin{proof}
Recall that  $\tilde{f}$ is a finitely irreducible, non-uniformly
expanding Markov map with a Markov partition $(\varDelta_{\omega})_{\omega\in F}$
which has mild distortion and uniform decay of cylinders, and which
satisfies \eqref{dim}. Further, each $A_{i,j}$ is a finite union of elements of $(\varDelta_{\omega})_{\omega\in F}$. 
%By slightly modifying the proofs we see that  Corollary \ref{cor1} and Theorem
%\ref{dimthm} also hold in this setting. 
By \cite[Lemma 2.4]{JaeKesMun16}  we have $\beta_{\infty}=1/2$. Finally, that $BE(\boldsymbol{\alpha})\neq\emptyset$ for every  $\boldsymbol{\alpha}\in D$ can be shown as in the proof of Proposition \ref{multicusp}.
\end{proof}

\begin{remark}
In the case $k=1$, the last assertion in Proposition \ref{prop:multicuspfreq} is related to \cite[Theorem 2]{Mun12}
where it is shown that, for a certain Jarn\'ik set $\mathcal{J}\subset\Lambda_{c}(G)$,
we have $\dim_{H}\mathcal{J}=1/2$. Namely, by the definition of $\mathcal{J}$, 
we have $\lim_{n\rightarrow\infty}a_{n,1}(x)=\infty$ for every $x\in\mathcal{J}$, which implies  
$\mathcal{J}\subset BE(\boldsymbol0)$.
\end{remark}

 \section{Preliminaries on non-uniformly expanding Markov maps}
Throughout this section, let $f\colon \varDelta\to M$ be a non-uniformly expanding Markov map
with a Markov partition $\{\varDelta_a\}_{a\in S}$.
%We state and prove its basic properties.
In Section \ref{CM} we introduce a symbolic coding via the Markov partition,
as well as the necessary definitions and  conditions appearing in our  main results.
In Section \ref{full} we introduce a subset $J^*$ of $J$ of full Hausdorff dimension which detects  weak expansion.
In Section \ref{TF} we define the set $\mathcal F$ of admissible observables for our  main results.
In Section \ref{ap} we state two lemmas  on approximations of invariant measures, which will be  frequently used later on. 

 \subsection{Symbolic coding}\label{CM}
 
 Condition (M2) determines a transition matrix $(T_{ab})$ over the countable alphabet $S$  by
 $T_{ab}=1$ if $f\varDelta_a\supset\varDelta_b$ and $T_{ab}=0$ otherwise.
  {\it A word of length $n\geq1$}
   is an $n$-string of elements of $S$.
A word  $\omega_0\cdots \omega_{n-1}$ 
  of length $n$ is {\it admissible} if $n=1$, or else $n\geq2$ and $T_{\omega_{j}\omega_{j+1}}=1$ holds for every $0\leq j\leq n-1$.
 Denote by $E^n$ the set of $n$-admissible words and put $E^*=\bigcup_{n=1}^\infty E^n$.
 For two words $\omega=\omega_0\cdots \omega_{m-1}$ and $\eta=\eta_0\cdots  \eta_{n-1}$,
 denote by $\omega\eta$ the concatenated word
$\omega_0\cdots \omega_{m-1}\eta_0\cdots \eta_{n-1}$.
This notation extends in an  obvious way to concatenations of an arbitrary finite number of words.
 For convenience, put   $E^0=\{ \emptyset\}$,  $|\emptyset|=0$, and  $\eta\emptyset=\eta=\emptyset\eta$ 
 for  every  $\eta\in E^*$.
We say $f$ is {\it finitely irreducible} if there exists a finite set $\Lambda\subset E^*$ such that
for any $\omega,\eta\in E^*$ there exists $\lambda\in \Lambda$ such that $\omega\lambda \eta\in E^*$. 
If $f$ is fully branched, i.e., $T_{ab}=1$ for all $a,b\in S$, then it is finitely irreducible with $\Lambda=\emptyset$.

Let $\mathbb N$ denote the set of non-negative integers.
The transition matrix determines a topological Markov shift
$$X=\{x=(x_n)_{n=0}^\infty\colon x_n\in S,\ T_{x_nx_{n+1}}=1 \ \ \forall n\in \mathbb N\},$$
endowed with the usual symbolic metric.
The left shift acting on $X$ is denoted by $\sigma$:
$(\sigma x)_n=x_{n+1}$ for every $n\in\mathbb N$. 
 For each $\omega=\omega_0\cdots \omega_{n-1}\in E^n$, put $|\omega|=n$ and
 define % the corresponding {\it $n$-cylinder}
 the {\it $n$-cylinder}
$$\varDelta_\omega=\bigcap_{j=0}^{n-1} f^{-j}\varDelta_{\omega_j},$$
and put
  $$[\omega]=[\omega_0,\ldots,\omega_{n-1}]=\{x\in  X\colon x_{j}=\omega_j
 \ \ 0\leq\forall j\leq n-1\}.$$
% Note that $\varDelta_{\omega}\subset\pi[\omega]$. 
 The length of the cylinder $\varDelta_\omega$ is denoted by $|\varDelta_\omega|$.
We say $f$ has {\it uniform decay of cylinders}
if
$$\displaystyle{\lim_{n\to\infty}}\sup_{\omega\in E^n}|\varDelta_\omega|=0.$$
  If $f$ has uniform decay of cylinders,
  then for any $(x_n)_{n=0}^\infty\in X$ the set
 $\bigcap_{n=0}^\infty\overline{f^{-n}\varDelta_{x_n}}$ is a singleton.
The coding map $\pi\colon X\to M$ given by
  $$\pi((x_n)_{n=0}^\infty)\in \bigcap_{n=0}^\infty\overline{f^{-n}\varDelta_{x_n}}$$ is well-defined and satisfies $J\subset \pi(X)$.
  It is measurable, 
  one-to-one except on the countable set $\bigcup_{n=0}^\infty f^{-n}(\bigcup_{a\in S}\partial\varDelta_a)$.
  Since the set $\{x\in X\colon\text{$\pi$ is one-to-one at
 $\pi(x)$}\}$ is $\sigma$-invariant and
 $f\circ\pi=\pi\circ \sigma$ holds on this set, 
  %preserves entropy of invariant measures:
% Define $X_0=\{x\in X\colon\text{$\pi$ is one-to-one at
 %$\pi(x)$}\}.$  Then $\sigma(X_0)\subset X_0$,
 %and so $f(\pi(X_0))\subset\pi(X_0)$.
%The  $\pi|_{X_0}$ is bi-measurable onto its image, and so preserves
 %entropy of invariant measures.
for any non-atomic $f$-invariant measure $\mu$ on $\pi(X)$, there 
exists a $\sigma$-invariant measure $\mu'$ 
such that $\mu=\mu'\circ\pi^{-1}$ and the entropy of $\mu'$ with respect to $\sigma$ is equal to $h(\mu)$.
\subsection{Weak expansion on a set of full dimension}\label{full}
For a neutral index $a\in\Omega$ and an integer $n\geq1$, let
$\varDelta^n(a)$ denote the union of all $n$-cylinders containing $x_a$
(there are at least one and at most two such cylinders, see the remark below).

\begin{remark}\label{rem3} Since the elements of the Markov partition may intersect each other at their boundary points, it may happen that $a,b\in \Omega$, $a\neq b$ and $x_a=x_b$.
  \end{remark}

Fix an integer $n_0\geq1$ such that the following holds:

\begin{itemize}
\item[(i)] If $a,b\in\Omega$ and $x_a\neq x_b$, then  $\varDelta^{n_0}(a)\cap\varDelta^{n_0}(b)=\emptyset.$

\item[(ii)]
For each $a\in\Omega$, there exists at most one $b\in\Omega$
such that  $f(\varDelta^{n_0}(a))\cap \varDelta^{n_0}(b)$ has non-empty interior.

\end{itemize}
Put
$$V=\overline{\bigcup_{a\in\Omega}\varDelta^{n_0}(a)}.$$
This is a closed neighborhood of the set $\{x_a\colon a\in\Omega\}.$
%$$\textcolor{red}{V_M=\bigcup\left\{\varDelta_w\colon |w|\leq M,\ \inf_{\varDelta_w}|Df^M|>1\right\}}.$$
%$$V_L=\bigcup\left\{\varDelta_w\colon |w|= L,\ \varDelta_w\cap\mathscr{C}=\emptyset\right\},$$
Define
%$$J'=\{x\in J\colon\#\{n\geq0\colon f^nx\notin V\}=\infty\}.$$
$$J^*=J\cap\bigcap_{n=0}^\infty\bigcup_{k=n}^\infty f^{-k}(M\setminus V).$$
Points in $J^*$ can have zero pointwise Lyapunov exponent. Nevertheless, the following weak expansion estimate holds.
\begin{lemma}\label{w-exp}
Let $x\in J^*$. For any $n'\geq1$ there exists an integer $n\geq n'$
such that for every $\omega\in E^n$ such that
$x\in\varDelta_\omega$, we have 
$\inf_{\varDelta_\omega}|(f^n)'|\ge \inf_{\varDelta\setminus V}|f'|>1.$
\end{lemma}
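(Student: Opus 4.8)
The plan is to exploit the defining property of $J^*$: every point $x\in J^*$ has infinitely many iterates landing in $M\setminus V$. Fix $x\in J^*$ and $n'\geq 1$. Since $x\in\bigcap_{m}\bigcup_{k\ge m}f^{-k}(M\setminus V)$, I can choose an integer $n\geq n'$ with $f^n x\in M\setminus V$; in fact I will arrange $n$ so that, among the orbit segment $x, fx,\dots, f^{n-1}x$, all but a controlled number of the points lie outside a slightly smaller neutral neighborhood, and the last point $f^{n-1}x$ lies outside $V$. Let $\omega=\omega_0\cdots\omega_{n-1}\in E^n$ be the unique admissible word with $x\in\varDelta_\omega$ (uniqueness up to boundary identifications, which do not affect the estimate). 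The chain rule gives $\inf_{\varDelta_\omega}|(f^n)'|\geq \prod_{j=0}^{n-1}\inf_{\varDelta_{\sigma^j\omega}}|f'|$, where I use mild distortion to replace $|f'(f^jy)|$ by a quantity comparable to $\inf_{\varDelta_{\omega_j}}|f'|$ up to a uniformly bounded multiplicative constant (bounded distortion of cylinders).

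The key point is to show this product is at least $\inf_{\varDelta\setminus V}|f'|>1$. First, $\inf_{\varDelta\setminus V}|f'|>1$ holds because $\varDelta\setminus V$ is a set on which $|f'|>1$ everywhere (the only points where $|f'|=1$ are the neutral periodic points $x_a$, all contained in the interior of $V$ by construction of $n_0$ and $V$), and by (M1) the derivatives are bounded and, on the closed set $\varDelta\setminus V$ away from neutral points, bounded away from $1$; here a short compactness/uniform-continuity argument using Renyi-type distortion bounds gives the strict inequality uniformly over all branches. Next, for the branches $\varDelta_{\omega_j}$ with $f^jx\notin V$ we directly have $\inf_{\varDelta_{\omega_j}}|f'|\geq \inf_{\varDelta\setminus V}|f'|>1$, while for the finitely many indices $j$ with $f^jx\in V$ we only know $\inf_{\varDelta_{\omega_j}}|f'|\geq 1$ a priori, but not less than $1$: indeed on a cylinder near a neutral point the derivative is $\geq 1$ with equality only at the neutral point itself. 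Thus every factor in the product is $\geq 1$, at least one factor (coming from $f^{n-1}x\notin V$, say) is $\geq \inf_{\varDelta\setminus V}|f'|$, and the whole product is therefore $\geq\inf_{\varDelta\setminus V}|f'|>1$, which is exactly the claimed bound.

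The main obstacle I anticipate is the branches $\varDelta_{\omega_j}$ that straddle a neutral point: on such a cylinder $\inf_{\varDelta_{\omega_j}}|f'|=1$ exactly, so these factors contribute nothing but also do no harm, and I must be careful that the distortion correction constants do not accumulate. This is where I would invoke mild distortion (Renyi's condition) to control $\log|(f^n)'x|$ versus $\sum_j\log\inf_{\varDelta_{\omega_j}}|f'|$ with a uniform additive error, and then absorb that error by choosing $n$ large — but in fact the cleaner route, and the one I would pursue, is to avoid distortion altogether by working directly with $\inf_{\varDelta_\omega}|(f^n)'|$ and the submultiplicativity $\inf_{\varDelta_\omega}|(f^n)'|\geq \bigl(\inf_{\varDelta_{\omega_0\cdots\omega_{n-2}}}|(f^{n-1})'|\bigr)\cdot\bigl(\inf_{f^{n-1}\varDelta_\omega}|f'|\bigr)$, noting $f^{n-1}\varDelta_\omega\supset\varDelta_{\omega_{n-1}}$ is a full branch image that need not be contained in $V$ precisely because $f^{n-1}x\notin V$ forces $x_{\omega_{n-1}}$ to correspond to a non-neutral or suitably-positioned branch; iterating and collecting the single good factor yields the estimate. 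So the real content is just (a) choosing $n\geq n'$ with the right orbit point outside $V$, which $J^*$ supplies for free, and (b) the uniform gap $\inf_{\varDelta\setminus V}|f'|>1$, which follows from non-uniform expansion together with the fact that $V$ is a neighborhood of all neutral points.
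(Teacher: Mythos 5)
There is a genuine gap at the heart of your argument: the single ``good factor'' you extract from the chain rule is not actually bounded below by $\inf_{\varDelta\setminus V}|f'|$. You choose $n$ so that $f^{n-1}x\notin V$ and then claim that the factor coming from the last branch satisfies $\inf_{f^{n-1}\varDelta_\omega}|f'|\ge\inf_{\varDelta\setminus V}|f'|$. But by the Markov property $f^{n-1}\varDelta_\omega$ is the \emph{entire} $1$-cylinder $\varDelta_{\omega_{n-1}}$ (the inclusion goes the opposite way from what you wrote), whereas $V$ is a union of $n_0$-cylinders, which are much smaller sets. If $\omega_{n-1}=a$ is a neutral index, the neutral point $x_a$ lies in $\varDelta_{\omega_{n-1}}$ even though $f^{n-1}x$ may lie in $\varDelta_a\setminus\varDelta^{n_0}(a)\subset M\setminus V$; then $\inf_{\varDelta_{\omega_{n-1}}}|f'|=1$ and your product collapses to the trivial bound $\ge 1$. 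Your parenthetical claim that ``$f^{n-1}x\notin V$ forces $x_{\omega_{n-1}}$ to correspond to a non-neutral or suitably-positioned branch'' is exactly the false step: membership of the single point $f^{n-1}x$ in $M\setminus V$ says nothing about the branch $\varDelta_{\omega_{n-1}}$ as a whole. The same objection applies to your first, distortion-based version of the argument, which moreover invokes R\'enyi's condition and mild distortion, neither of which is a hypothesis of this lemma.

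The paper's proof avoids the problem by matching the scale of $V$: it picks $n>\max\{n_0,n'\}$ with $f^{n-n_0}x\notin V$ and observes that $f^{n-n_0}\varDelta_\omega$ is an $n_0$-cylinder. Since $V$ is a union of $n_0$-cylinders and cylinders of the same length have pairwise disjoint interiors, that $n_0$-cylinder must be contained in $\overline{\varDelta\setminus V}$, so $|f'(f^{n-n_0}y)|\ge\inf_{\varDelta\setminus V}|f'|$ for \emph{every} $y\in\varDelta_\omega$, not just for $y=x$; the remaining chain-rule factors are $\ge1$ because $|f'|\ge1$ everywhere. If you replace your choice of $n$ by this one and take the good factor at time $n-n_0$ rather than $n-1$, your computation goes through with no distortion estimates at all.
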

\begin{proof}
Let $n>\max\{n_0,n'\}$ be such that
$f^{n-n_0} x \notin V$.
For every $\omega\in E^n$ such that $x\in\varDelta_\omega$, $f^{n-n_0}\varDelta_{\omega}$ is a $n_0$-cylinder.
Since cylinders of the same word lengths have disjoint interiors,
$f^{n-n_0}\varDelta_{\omega}\subset\overline{\varDelta\setminus V}$ holds.
For every $y\in \varDelta_\omega$ we have
$$|(f^n)'y|=|(f^{n_0-1})'(f^{n-n_0+1}y)||f'(f^{n-n_0}y)||(f^{n-n_0})'y|\geq
|f'(f^{n-n_0}y)|\geq\inf_{\varDelta\setminus V}|f'|>1,$$
which yields the claim.
\end{proof}
We restrict ourselves to the set $J^*$, and
use Lemma \ref{w-exp} to avoid the effect of neutral periodic points for the  upper estimate of the Hausdorff dimension of the level sets in our main theorem. 
That this restriction carries the full  Hausdorff dimension follows from the next lemma.
\begin{lemma}\label{B}
If $f$ has uniform decay of cylinders, then 
$J\setminus J^*$ is a countable set.
\end{lemma}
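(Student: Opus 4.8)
The plan is to show that every point of $J\setminus J^*$ corresponds under the coding map $\pi$ to a symbolic sequence that is eventually trapped in the finitely many "neutral blocks'' $\varDelta^{n_0}(a)$, and then to observe that there are only countably many such sequences. First I would unwind the definition: $x\in J\setminus J^*$ means that there exists $n\geq 0$ such that $f^k x\in V$ for every $k\geq n$. Since $V=\overline{\bigcup_{a\in\Omega}\varDelta^{n_0}(a)}$ and $\Omega$ is finite (by the non-uniform expansion hypothesis), and each $\varDelta^{n_0}(a)$ is a union of at most two $n_0$-cylinders, the set $V$ is contained in a finite union of $n_0$-cylinders, say $V\subset\bigcup_{\omega\in\mathcal V}\varDelta_\omega$ with $\mathcal V\subset E^{n_0}$ finite.

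Next I would pass to the symbolic side. Fix $x\in J\setminus J^*$ with associated threshold $n$, and choose $\xi=(\xi_m)_{m\ge0}\in X$ with $\pi(\xi)=x$; such $\xi$ exists since $J\subset\pi(X)$ (using uniform decay of cylinders). For each $k\ge n$ we have $f^k x\in V$, hence $f^k x$ lies in $\varDelta_{\xi_k\xi_{k+1}\cdots\xi_{k+n_0-1}}$ for the cylinder determined by the shifted sequence, and this cylinder must meet $V$, forcing $\xi_k\cdots\xi_{k+n_0-1}\in\mathcal V$. In other words, the tail $(\xi_k)_{k\ge n}$ is a sequence over the finite alphabet $S$ in which every length-$n_0$ window belongs to the finite set $\mathcal V$. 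Such tails are precisely the bi-infinite-to-the-right paths in a finite subshift on at most $(\#\mathcal V)$ many $n_0$-blocks, and crucially each neutral index $a\in\Omega$ is a \emph{periodic} point combinatorially: the only admissible sequences all of whose $n_0$-windows lie in $\mathcal V$ are eventually periodic with period dividing the period of one of the $x_a$. I would argue this directly: by property (ii) of $n_0$, from each neutral block $\varDelta^{n_0}(a)$ the map $f$ can enter the interior of at most one other neutral block $\varDelta^{n_0}(b)$, so the directed graph on the blocks in $\mathcal V$ has out-degree at most one at each relevant vertex; a path in such a graph is eventually periodic. Hence the tail of $\xi$ is eventually periodic, and is determined by finitely many choices.

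Finally I would assemble the count. Each $x\in J\setminus J^*$ is determined by its coding sequence $\xi$ (the coding map is one-to-one off a countable set, and I can handle that countable exceptional set separately — it contributes only countably many points). The sequence $\xi$ consists of a finite initial segment $\xi_0\cdots\xi_{n-1}$ (word over the countable alphabet $S$, so countably many choices for each fixed $n$, hence countably many in total over all $n$) followed by an eventually periodic tail drawn from a finite set of possibilities. A countable union of countable sets is countable, so $J\setminus J^*$ is countable. The step I expect to be the main obstacle is the combinatorial claim that the admissible sequences lying inside the neutral blocks are eventually periodic: this is where properties (i) and (ii) of $n_0$ and the finiteness of $\Omega$ must be used carefully, together with the fact that a neutral periodic point $x_a$ satisfies $f^p x_a = x_a$, so that the orbit of any point whose forward orbit stays in $V$ is forced toward one of these finitely many periodic orbits. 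Once this is in hand the cardinality bookkeeping is routine.
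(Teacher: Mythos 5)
Your proposal is correct and follows essentially the same route as the paper's proof: both arguments use condition (ii) in the choice of $n_0$ to show that the transition graph on the neutral blocks has out-degree at most one, so that the tail of the coding sequence of any point whose forward orbit remains in $V$ is essentially unique, and then invoke the uniform decay of cylinders (each coding sequence determines at most one point, with the boundary-orbit exceptions forming a countable set) together with countable-union bookkeeping over the initial segments. (One minor slip: the alphabet $S$ is countable rather than finite, but your tails are in any case confined to the finitely many symbols occurring in $\mathcal V$, so the argument is unaffected.)
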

\begin{proof}
Condition (M2) implies that if $f(\varDelta^{n_0}(a))\cap \varDelta^{n_0}(b)$ has non-empty interior then $f(\varDelta^{n_0}(a))\supset \varDelta^{n_0}(b)$.
From this and the choice of $n_0$, it follows that
 for each $a\in\Omega$, all points in 
 $\overline{\varDelta^{n_0}(a)}\cap\bigcap_{n=1}^\infty f^{-n}V$, except possibly a countable number of points which eventually enter the boundary of some element of the Markov partition, 
% but countably many ones
 share the same coding sequence.
By the uniform decay of cylinders, 
%the set of these points
there exists at most one point in $M$ for each given coding sequence. Since $\Omega$ is finite it thus follows that  $\bigcap_{n=0}^\infty f^{-n}V$ is countable. Therefore, 
%is a singleton or an empty set, and therefore
%$\varDelta^\chi(a)\cap\bigcap_{n=1}^\infty f^{-n}V$
%is a countable set.
 %  Since $a\in\Omega$ is an arbitrary neutral index
   $J\setminus J^*=J\cap\bigcup_{k=0}^\infty f^{-k}\left(\bigcap_{n=0}^\infty f^{-n}V\right)$ is a  countable set.
 \end{proof}

 \subsection{Class of observables}\label{TF}
  For a function $\phi\colon \varDelta\to\mathbb R$  
and an integer $n\geq1$ put
$$D_n(\phi)=\sup_{\omega\in E^n}\sup_{x,y\in\varDelta_\omega}S_n\phi(x)-S_n\phi(y),$$
 and define
$$\mathcal{F}=\{\phi\colon \varDelta\to\mathbb R\colon D_1(\phi)<\infty\text{
and }D_n(\phi)=o(n) \}.$$
The first condition is non-trivial only if $\phi$ is unbounded.
We will only consider observables which belong to $\mathcal F$. If $f$ has uniform decay of cylinders,
$\mathcal F$ contains the set of bounded uniformly continuous functions.

% Given a potential $\varphi\colon X\to\mathbb R$ we introduce a pressure
% $$P(\varphi)=
% \lim_{n\to\infty}\frac{1}{n}\log\sum_{w\in E^n}\exp\left(\sup_{[w]}\sum_{j=0}^{n-1}
% \varphi\circ\sigma^j\right).$$

%\begin{lemma}\label{deltazero}
%Let $f$ be finitely irreducible and $\phi\in\mathcal F(\varDelta)$.
%Then $$P(\varphi)= \sup\left\{h(\mu)+\int\varphi d\mu\colon\mu\in\mathcal M,\ \int\varphi d\mu>-\infty \right\}.$$
%including the case $\infty=\infty$.
%\end{lemma}
% \begin{lemma}\label{deltazero}
% Let $f$ be finitely irreducible and have mild distortion. 
% If $\mu$ is a hyperbolic measure, then $\dim(\mu)\leq1$.
% \end{lemma}
% \begin{proof}
% Recall that $f|_J$ is semi-conjugate to the countable Markov shift $(X,\sigma)$ via the coding map $\pi\colon X\to M$.
 % Using the finite irreducibility and mild distortion one can show 
% $$ \sup\left\{h(\mu)-\chi(\mu)\colon\mu\in\mathcal M,\ \chi(\mu)<\infty \right\}=P(-\log|Df|\circ\pi)\leq0.$$
%The equality is from a slight modification of
 %the proof of the variational principle \cite[Theorem 1.2]{MauUrb01}.
% The inequality implies that the dimension of any hyperbolic measure is $\leq1$.
%\end{proof}

%\subsection{Mild distortion}\label{check}

%By virtue of the lemma below,

We say $f$ satisfies {\it R\'enyi's condition} if each branch $f_a$ $(a\in S)$ is $C^2$ and satisfies
$$\sup_{a\in S}\sup_{\varDelta_a}\frac{|f_a''|}{|f_a'|^2}<\infty.$$
%R\'enyi's condition is useful to check the mild distortion for concrete examples.

 \begin{lemma}\label{mild}
If $f$ has uniform decay of cylinders and satisfies R\'enyi's condition,
then it has mild distortion, namely $\log|f'|\in\mathcal F$.

\end{lemma}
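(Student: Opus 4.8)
The plan is to show that $\phi := \log|f'|$ lies in $\mathcal{F}$, i.e. to verify the two defining conditions $D_1(\phi) < \infty$ and $D_n(\phi) = o(n)$. First I would handle $D_1(\phi)$: for $x,y$ in a single cylinder $\varDelta_a$, the mean value theorem applied to $\log|f_a'|$ gives $|\log|f_a'x| - \log|f_a'y|| \le |f_a''(\xi)|/|f_a'(\xi)| \cdot |x-y|$ for some intermediate point; but more directly, since $|x-y| \le |\varDelta_a| \le \operatorname{diam}(M) < \infty$ and $|f_a''|/|f_a'|^2$ is bounded by Rényi's condition, while $|f_a'|$ is itself bounded below away from $0$ on branches near neutral points (and bounded on all branches by (M1)), a standard bounded-distortion estimate yields a uniform bound $|\log|f_a'x| - \log|f_a'y|| \le C$ independent of $a$. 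The slight subtlety is that Rényi's condition only controls $|f_a''|/|f_a'|^2$, not $|f_a''|/|f_a'|$ directly; one resolves this by working with the chain rule for the \emph{inverse} branch $g_a = f_a^{-1}$, whose distortion $|g_a''|/|g_a'|^2$ transforms nicely, or alternatively by noting that over a full cylinder the composition structure makes the relevant sum telescope. I expect to carry this out by estimating $D_n$ directly rather than $D_1$ and $D_n$ separately.

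Next I would turn to the key estimate: for $\omega = \omega_0\cdots\omega_{n-1} \in E^n$ and $x,y \in \varDelta_\omega$, write
$$S_n\phi(x) - S_n\phi(y) = \sum_{j=0}^{n-1} \bigl(\log|f'(f^jx)| - \log|f'(f^jy)|\bigr) = \log\frac{|(f^n)'x|}{|(f^n)'y|}.$$
The classical Rényi / Koebe-type argument shows that under Rényi's condition this quantity is bounded by $C' \cdot |f^n\varDelta_\omega| \le C' \operatorname{diam}(M)$ via the telescoping sum $\sum_{j=0}^{n-1} |f^jx - f^jy|$, which is geometrically controlled because $f^j$ maps $\varDelta_\omega$ into a single cylinder $\varDelta_{\omega_j\cdots\omega_{n-1}}$ and distances are comparable to cylinder lengths. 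This simultaneously gives $D_1(\phi) < \infty$ (take $n=1$) and, in the uniformly expanding case, would give $D_n(\phi) = O(1)$. The obstacle is that with neutral periodic points there is \emph{no} uniform contraction, so one cannot bound $\sum_j |f^jx - f^jy|$ by a convergent geometric series uniformly in $\omega$; the sum can be of order $n$ near a neutral point.

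This is where \textbf{uniform decay of cylinders} enters and is the main obstacle to handle carefully. The point is that $D_n(\phi) = o(n)$ is much weaker than $D_n(\phi) = O(1)$, so I do not need a geometric bound — I only need that the per-step distortion contributions, summed, grow sublinearly. Concretely: by uniform decay of cylinders, $\varepsilon_n := \sup_{\omega \in E^n}|\varDelta_\omega| \to 0$. For $x, y \in \varDelta_\omega$ with $|\omega| = n$, the point $f^j x$ and $f^j y$ both lie in the $(n-j)$-cylinder $\varDelta_{\omega_j\cdots\omega_{n-1}}$, so $|f^jx - f^jy| \le \varepsilon_{n-j}$. The one-step distortion bound from Rényi's condition gives $|\log|f'(f^jx)| - \log|f'(f^jy)|| \le \tilde C\,|f^jx - f^jy| + (\text{lower order})$, hence
$$|S_n\phi(x) - S_n\phi(y)| \le \tilde C \sum_{j=0}^{n-1} \varepsilon_{n-j} = \tilde C \sum_{m=1}^{n} \varepsilon_m.$$
Since $\varepsilon_m \to 0$, the Cesàro averages $\frac{1}{n}\sum_{m=1}^n \varepsilon_m \to 0$, which is exactly $D_n(\phi) = o(n)$. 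The technical care needed is in the one-step inequality: the naive bound $|\log|f'(z)| - \log|f'(w)|| \le \sup |f''|/|f'| \cdot |z-w|$ requires controlling $|f''|/|f'|$, not $|f''|/|f'|^2$; but $|f'|$ is bounded above by (M1), and on each branch $|f'|$ is bounded below by a constant possibly depending on the branch — one must check this constant can be taken uniform, or instead argue via the inverse branches where $|g'|$ is uniformly bounded above (since it is a contraction off $V$ and controlled on $V$ by the finitely many neutral branches) and Rényi's condition for $g$ follows from that for $f$. I would present the inverse-branch version as it is cleanest. Assembling these three ingredients — uniform one-step distortion control from Rényi, the cylinder-diameter bound $|f^jx-f^jy| \le \varepsilon_{n-j}$ from the Markov structure, and the Cesàro convergence $\frac1n\sum \varepsilon_m \to 0$ from uniform decay — completes the proof.
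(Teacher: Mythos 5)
Your proposal is correct and follows essentially the same route as the paper: the one-step Rényi bound $\log\frac{|f_a'x|}{|f_a'y|}\leq\sup_{\varDelta_a}\frac{|f_a''|}{|f_a'|^2}\,|fx-fy|$ (which is exactly the inverse-branch computation you flag as the clean way around the $|f''|/|f'|$ versus $|f''|/|f'|^2$ issue), iterated along the orbit with $|f^jx-f^jy|$ bounded by the diameter of the containing cylinder, and the Ces\`aro convergence of $\sup_{\omega\in E^m}|\varDelta_\omega|$ giving $D_n(\log|f'|)=o(n)$. The only cosmetic difference is that the paper's telescoped sum runs over image distances $|f^j\varDelta_\omega|$ (hence $(n-j)$-cylinders plus a single $O(1)$ term for the last step), which does not affect the conclusion.
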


\begin{proof}
%\footnote{
%\textcolor{red}{Let $g\colon f\varDelta_a\to\varDelta_a$ denote the inverse branch of $f|_{\varDelta_a}$, namely $g(y)=(f|_{\varDelta_a})^{-1}(y)$.
%Transform the variable $y=f(x)$. Then
%$D_{yy}g(y)/D_yg(y)=D_{xx}f(x)/(D_xf(x))^2$, and so $D_y\log|D_yg(y)|=D_{xx}f(x)/(D_xf(x))^2.$
%Notice that $\log\frac{|D_xf(z_1)|}{|D_xf(z_2)|}=\log\frac{|D_yg(fz_2)|}{|D_yg(fz_1)|}$.
%To evaluate this, we use the mean value theorem for the function $y\mapsto\log |D_yg(y)|$.}} 
% Considering the inverse branches of $f$, 
For every $a\in S$ and all $x,y\in\varDelta_a$ we have
$$\log\frac{|f_a'x|}{|f_a'y|}\leq\sup_{\varDelta_a} \frac{|f_a''|}{|f_a'|^2}|fx-fy|\leq \sup_{\varDelta_a}\frac{|f_a''|}{|f_a'|^2}.$$
Therefore $D_1(\log|f'|)<\infty$.
Iterating this argument, for every $n\geq2$, every $\omega\in E^n$ and all $x,y\in \varDelta_\omega$
we obtain
$$\log\frac{|(f^n)'x|}{|(f^n)'y|}\leq \sup_{a\in S}\sup_{\varDelta_a}\frac{|f_a''|}{|f_a'|^2}\sum_{j=1}^{n}|f^j\varDelta_\omega|
\leq\sup_{a\in S}\sup_{\varDelta_a}\frac{|f_a''|}{|f_a'|^2} \left(\sum_{j=1}^{n-1}\sup_{\omega\in E^{n-j}}|\varDelta_{\omega}|+1\right).$$
The uniform decay of cylinders implies that the last sum is $o(n)$, and therefore
$f$ has mild distortion.
\end{proof}

%\begin{remark}
%\textcolor{blue}{If $f$ satisfies R\'enyi's condition, then so does $f^n$, $n\geq2.$}
%\end{remark}

\subsection{Approximations of expanding measures}\label{ap}

For a subset $F$ of
$E^*$ we use the following notation: $$[F]=\bigcup_{\omega\in F} [\omega],\quad
|F|=\sup_{\omega\in F}|\omega|,\quad
\varDelta F=\bigcup_{w\in F}\varDelta_\omega.$$

 The lemma below
 allows us to approximate an ergodic measure
 with a finite collection of cylinders under the assumption of finite irreducibility.
%For a finite string $\vec{\phi}=(\phi_1,\ldots,\phi_k)\in \mathcal{F}(\varDelta)^{k}$ of functions and a measure $\mu\in\mathcal M$, 
%the expression $\vec\phi\in L^1(\mu)$ indicates that $\phi_i\in L^1(\mu)$ holds for $i=1,\ldots,k$.
 For $a,b\in S$ and an integer $n\geq1$, let $E^n(a,b)$ denote the set of elements of $E^n$
whose first symbol is $a$ and the last one is $b$. 
%Put \textcolor{red}{ %$E(a,b)=\bigcup_{n=1}^\infty E^n(a,b)$.
%Put $E^n(a)=\bigcup_{b\in\mathbb N}E^n(a,b)$.}\footnote{Maybe not used}

   \begin{lemma}\label{katok}
    Let $f$ be finitely irreducible and have uniform decay of cylinders.
  Let $\boldsymbol\phi\in\mathcal{F}^{\mathbb N}$,
  $k\geq1$ and let
$\mu\in\mathcal M(f)$ be an ergodic expanding measure such that $\phi_i\in L^1(\mu)$ for every $1\leq i\leq k$.
For any $\epsilon>0$ and any integer $n\geq1$,
there exist $\ell\geq n$, $a,b\in S$ and
a finite subset $F^{\ell}(a,b)$ of $E^{\ell}(a,b)$ such that 
\begin{equation*}
\left|\frac{1}{\ell}\log\#F^{\ell}(a,b)- h(\mu)\right|<\epsilon\ \text{ 
and }\
\sup_{\varDelta F^{\ell}(a,b)}\left\|\frac{1}{\ell}S_{\ell}\boldsymbol\phi_k-\int\boldsymbol\phi_k d\mu\right\|<\epsilon.\end{equation*}
\end{lemma}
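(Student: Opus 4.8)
The plan is to pass to the symbolic side and use the Shannon–McMillan–Breiman theorem together with Birkhoff's ergodic theorem on the topological Markov shift $X$, then return to cylinders in $M$ via the coding map $\pi$. Since $\mu$ is non-atomic (it is expanding, hence $h(\mu)>0$ if $h(\mu)>0$; even if $h(\mu)=0$ one argues directly), Section \ref{CM} gives a $\sigma$-invariant lift $\mu'$ on $X$ with $h_{\sigma}(\mu')=h(\mu)$ and $\mu = \mu'\circ\pi^{-1}$, and $\mu'$ is ergodic because $\mu$ is. First I would apply the Birkhoff and Shannon–McMillan–Breiman theorems to $\mu'$: for $\mu'$-a.e.\ $x\in X$ and all large $m$, simultaneously $\bigl|\tfrac1m S_m\phi_i(x)-\int\phi_i\,d\mu\bigr|$ is small for each $1\le i\le k$ (here $S_m\phi_i$ is read through $\pi$; note $\phi_i\in L^1(\mu)$), and $-\tfrac1m\log\mu'([x_0,\dots,x_{m-1}])$ is close to $h(\mu)$. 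Combined with Egorov's theorem, on a set of measure $>1-\eta$ these convergences are uniform past some threshold $m_0$.

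Next I would extract the combinatorial count. Fix $\ell$ large (at least $n$, at least $m_0$, and large enough that the distortion terms $D_\ell(\phi_i)/\ell$ coming from the definition of $\mathcal F$ are all $<\epsilon/4$, and that $\sup_{\omega\in E^\ell}|\varDelta_\omega|$ is tiny by uniform decay of cylinders). Let $G$ be the set of $\ell$-words $\omega$ such that $[\omega]$ meets the good Egorov set; then $\mu'([\omega])\ge e^{-\ell(h(\mu)+\epsilon)}$ for each such $\omega$ (roughly), so $\#G\le e^{\ell(h(\mu)+\epsilon)}$, while $\sum_{\omega\in G}\mu'([\omega])\ge 1-\eta$ together with $\mu'([\omega])\le e^{-\ell(h(\mu)-\epsilon)}$ gives $\#G\ge (1-\eta)e^{\ell(h(\mu)-\epsilon)}$. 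Thus $\tfrac1\ell\log\#G$ is within $\epsilon$ (after shrinking constants) of $h(\mu)$, and every $\omega\in G$ satisfies $\|\tfrac1\ell S_\ell\boldsymbol\phi_k(y)-\int\boldsymbol\phi_k\,d\mu\|<\epsilon$ for all $y\in\varDelta_\omega$, using that the Birkhoff sums along the orbit of the coded point are close to $\ell\int\phi_i\,d\mu$ and that $D_\ell(\phi_i)$ controls the variation of $S_\ell\phi_i$ over the whole cylinder $\varDelta_\omega$.

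Finally I would arrange the prescribed first and last symbols. There are finitely many choices of pairs $(a,b)\in S\times S$ that actually occur as (first symbol, last symbol) of words in $G$ only if $S$ is finite — in general I instead pigeonhole: partition $G$ according to its first symbol $a$; by an averaging argument (or by first restricting $\mu'$ to a cylinder $[a]$ of positive measure and noting ergodic averages still see it) there is a single $a$ with $\#\{\omega\in G:\omega_0=a\}\ge \#G/(\text{something subexponential})$, and likewise a single $b$ for the last symbol, at the cost of replacing $\ell$ by $\ell$ (no change) and absorbing the lost factor into $\epsilon$ since it is $e^{o(\ell)}$. Set $F^\ell(a,b)$ to be this sub-collection, which lies in $E^\ell(a,b)$ by construction. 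The main obstacle I anticipate is making the pigeonholing on the endpoint symbols rigorous when $S$ is infinite: one cannot simply say "finitely many pairs", so the cleanest route is to fix at the outset a symbol $a$ with $\mu'([a])>0$, work with the induced first-return structure or simply note that by ergodicity $\mu'$-a.e.\ orbit returns to $[a]$ with positive frequency, and harvest the good $\ell$-words as subwords that begin and end in $[a]$ — this forces $a=b$, which is even stronger than required, and the count is only reduced by a factor $e^{o(\ell)}$. Everything else — Egorov, SMB, the $D_n(\phi)=o(n)$ estimate, uniform decay of cylinders — is routine.
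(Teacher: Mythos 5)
Your plan is correct and follows essentially the same route as the paper, whose own proof consists of passing to the symbolic level via the coding map and invoking the Katok-type entropy approximation lemma \cite[Lemma 2.3]{Tak} for countable Markov shifts --- i.e., exactly the Shannon--McMillan--Breiman/Birkhoff/Egorov counting argument you describe, with $D_\ell(\phi_i)=o(\ell)$ upgrading the pointwise estimate to the supremum over each cylinder. The one step you rightly flag as delicate (prescribing the first and last symbols over an infinite alphabet) is resolved as you suggest, by harvesting return words to a fixed positive-measure cylinder $[a]$ and pigeonholing over the subexponentially many possible lengths so as to extract a single common $\ell$.
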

\begin{proof}
By virtue of the uniform decay of cylinders, $f|_J$ is semi-conjugate to a countable Markov shift
via the Markov partition.
Then the proof of Lemma \ref{katok} is carried out on a symbolic level as in
\cite[Lemma 2.3]{Tak}.
\end{proof}
Since the finite irreducibility implies the transitivity,
the proof of \cite[Main Theorem]{Tak} works verbatim to show the next lemma
approximating non-ergodic measures with ergodic ones in a particular sense.

\begin{lemma}\label{nonergodic}%{\rm (cf.  \cite[Lemma 3.2]{IomJor}, \cite[Main Theorem]{Tak})}.
 Let $f$ be finitely irreducible and have uniform decay of cylinders.
Let $\boldsymbol\phi\in\mathcal{F}^{\mathbb N}$, $k\geq1$ and let
$\mu\in\mathcal M(f)$ be a non-ergodic expanding measure such that $\phi_i\in L^1(\mu)$ for every $1\leq i\leq k$.
For any $\epsilon>0$ there exists an ergodic expanding measure $\mu'\in\mathcal M(f)$ which satisfies
% \textcolor{red}{supported on the union of domains of finitely many branches of $f$}
$$|h(\mu)-h(\mu')|<\epsilon\ \text{ and }\ 
\left\|\int\boldsymbol\phi_k d\mu-\int\boldsymbol\phi_k d\mu'\right\|<\epsilon.$$
\end{lemma}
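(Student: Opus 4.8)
The plan is to combine the ergodic decomposition of $\mu$ with a symbolic gluing construction, using Lemma \ref{katok} to pass back and forth between invariant measures and finite families of cylinders. Since $f$ has uniform decay of cylinders, by Section \ref{CM} it suffices to work on the topological Markov shift $X$: the measure $\mu$ lifts to a $\sigma$-invariant measure with the same entropy and the same values of $\int\boldsymbol\phi_k$, and a suitable ergodic $\sigma$-invariant measure on $X$ pushes forward under the coding map $\pi$ to an ergodic measure in $\mathcal M(f)$ with the same data. We may assume $h(\mu)<\infty$; the case $h(\mu)=\infty$ is analogous, with the first inequality read as $h(\mu')>1/\epsilon$.

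\emph{Reduction to a finite convex combination.} Write $\mu=\int\nu\,d\tau(\nu)$ for the ergodic decomposition. Since $h(\cdot)$ and $\nu\mapsto\int\phi_i\,d\nu$ $(1\le i\le k)$ are $\tau$-integrable and the maps $\mu\mapsto h(\mu)$, $\mu\mapsto\int\boldsymbol\phi_k\,d\mu$ are affine, we partition the space of ergodic measures into finitely many measurable pieces on each of which $h$ and the $\int\phi_i$ oscillate by a prescribed small amount (discarding a piece of small total $\tau$-contribution), pick a representative from each piece, and weight by its $\tau$-mass; approximating the weights by rationals we obtain ergodic measures $\nu_1,\dots,\nu_N$ and rationals $c_j=m_j/L>0$ with $\sum_j c_j=1$ such that $\mu_N=\sum_j c_j\nu_j$ satisfies $|h(\mu_N)-h(\mu)|<\epsilon/3$ and $\|\int\boldsymbol\phi_k\,d\mu_N-\int\boldsymbol\phi_k\,d\mu\|<\epsilon/3$, and moreover $\phi_i\in L^1(\nu_j)$ for all $i\le k$, $j\le N$. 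Components with zero Lyapunov exponent carry zero entropy by Ruelle's inequality, and can be replaced by ergodic expanding measures on long periodic orbits shadowing them — here the oscillation bound $D_1(\phi_i)<\infty$ makes the closing-up error in $\int\phi_i$ negligible — so we may assume each $\nu_j$ is expanding.

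\emph{Gluing.} Fix $\epsilon'>0$ small and $n$ large. Lemma \ref{katok} applied to each $\nu_j$ with the observables $\phi_1,\dots,\phi_k$ (and, when $\log|f'|\in\mathcal F$, with $\log|f'|$ adjoined, in order to keep exponents under control) yields, after passing to self-concatenations to equalise word lengths, an integer $\ell\ge n$, symbols $a_j,b_j\in S$ and finite families $F_j\subset E^\ell(a_j,b_j)$ with $|\tfrac1\ell\log\#F_j-h(\nu_j)|<\epsilon'$ and $\sup_{\varDelta F_j}\|\tfrac1\ell S_\ell\boldsymbol\phi_k-\int\boldsymbol\phi_k\,d\nu_j\|<\epsilon'$. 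By finite irreducibility fix connecting words of length at most $p$ making admissible the concatenation of a word of $F_i$ followed by a word of $F_j$, for all $i,j$, and closing the cycle. Let $\mathcal W$ be the family of super-words formed by concatenating $m_1$ words of $F_1$, then $m_2$ of $F_2$, and so on up to $m_N$ of $F_N$, with connectors inserted; then $\#\mathcal W=\prod_j(\#F_j)^{m_j}$ and the super-words may be taken of a common length $\ell^\ast$ (padding connectors if necessary, or else using a renewal measure below), with $L\ell\le\ell^\ast\le L\ell+2Lp$. Let $\widehat\mu$ be the $\sigma$-invariant measure on $X$ obtained from the $\ell^\ast$-step Bernoulli measure giving equal mass to all super-words, averaged over its $\ell^\ast$ time-translates; it is ergodic, and $\mu'=\widehat\mu\circ\pi^{-1}$ is an ergodic measure in $\mathcal M(f)$, which is expanding because at least one $\nu_j$ has positive exponent and $\log|f'|\ge0$. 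Then $h(\mu')=\tfrac1{\ell^\ast}\log\#\mathcal W=\tfrac1{\ell^\ast}\sum_j m_j\log\#F_j$, and $\int\boldsymbol\phi_k\,d\mu'$ is the $m_j$-weighted average of the $\int\boldsymbol\phi_k\,d\nu_j$ up to an error of order $\epsilon'$ from the $F_j$-cylinders and of order $p/\ell$ from the connectors. Letting $\ell\to\infty$ and $\epsilon'\to0$ pushes both quantities within $\epsilon/3$ of the ones for $\mu_N$, hence within $\epsilon$ of those for $\mu$.

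\emph{Main obstacle.} The delicate point is the gluing step. One must pin the entropy of a single ergodic measure close to $h(\mu)$ from both sides — the lower bound needs genuine combinatorial freedom, supplied by the families $F_j$ of Lemma \ref{katok}, while the upper bound needs that no spurious freedom creeps in, which is what forces the rigid super-word structure and the condition $\ell\gg p$ — and simultaneously keep the Lyapunov exponent positive and keep $\int\phi_i\,d\mu'$ on target even when $\phi_i$ is unbounded. This last requirement is exactly what the defining conditions $D_1(\phi_i)<\infty$ and $D_n(\phi_i)=o(n)$ of $\mathcal F$, together with $\phi_i\in L^1(\mu)$, are for, and it is already packaged into the conclusion of Lemma \ref{katok}; the rest is combinatorial bookkeeping. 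Alternatively, since finite irreducibility implies transitivity, one may simply observe that the proof of \cite[Main Theorem]{Tak} applies verbatim.
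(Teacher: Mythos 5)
Your proposal is correct and takes essentially the same route as the paper: the paper's entire proof of this lemma is the remark that finite irreducibility implies transitivity, so that the argument of \cite[Main Theorem]{Tak} applies verbatim --- which is precisely your closing alternative. Your detailed sketch (ergodic decomposition, reduction to a finite rational convex combination, Katok-type cylinder approximation of each component via Lemma \ref{katok}, and gluing with connecting words supplied by finite irreducibility) is a faithful reconstruction of that outsourced argument.
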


\section{On the proofs of the Main results}
This section is dedicated to proofs of the Main Theorem and related results, including Theorem \ref{dimthm}.
In Section \ref{bsec} we prove an upper bound (Proposition \ref{up}) leading to the formula in Main Theorem(b).
In Section \ref{keylow} we prove a lower bound (Proposition \ref{lowd}), and
 we put these together in Section \ref{end} to complete the proof of the Main Theorem.
 In Section \ref{pthmb} we prove Theorem \ref{dimthm}.

\subsection{Upper bound on dimension}\label{bsec}
%Fan et al. \cite[Proposition 4.3]{FanJorLiaRam16} used:
%they considered coverings of level sets by $n$-cylinders, and use finitely many of them to construct measures whose dimension gives a desired upper bound. 
% An exponential decay of cylinders is essential in this approach,
%and to allow the existence of neutral periodic points
%requires a different approach.
%To bound from above the Hausdorff dimension of the level sets, we modify Bowen's covering argument based on the thermodynamic formalism for finite or countable Markov shifts. 
%We indeed consider coverings by $n$-cylinders,
%but the coverings may consist of infinitely many cylinders. 
%Note that countable converings are used in the definition of the Hausdorff dimension.

Let $f\colon\varDelta\to M$, 
 $\boldsymbol\phi\in\mathcal{F}^{\mathbb N}$, 
 $\boldsymbol\alpha\in\mathbb R^{\mathbb N}$
 be as in the Main Theorem.
For each integer $k\geq1$ and $\epsilon>0$ put
$$ d_k(\epsilon)=\sup \left\{\dim(\mu)\colon\mu\in\mathcal M(f),\
\left\|\int\boldsymbol\phi_k d\mu-\boldsymbol\alpha_k\right\|<\epsilon\right\}+\epsilon,$$
and set $$ d_k=\displaystyle{\lim_{\epsilon\to0}}\ d_k(\epsilon).$$
The next proposition will be used to obtain upper bounds on the Hausdorff dimension of level sets.
\begin{prop}\label{up}
Let $f\colon\varDelta\to M$ be a non-uniformly expanding Markov map which is finitely irreducible, has mild distortion and  uniform decay of cylinders.
Let $\boldsymbol\phi\in\mathcal{F}^{\mathbb N}$ and $\boldsymbol\alpha\in\mathbb R^{\mathbb N}$, and
let $k\geq1$ be such that $B_k(\boldsymbol\phi,\boldsymbol\alpha)\neq\emptyset$.
Assume \begin{equation}\label{advantage}d_k<\delta_0.\end{equation}
Then the following holds:
\begin{itemize}
\item[(a)] If $\sup_\varDelta\|\boldsymbol\phi_k\|=\infty$,
then
$${\rm dim}_HB_k(\boldsymbol\phi,\boldsymbol\alpha)\leq d_k;$$
\item[(b)]  If $\sup_\varDelta\|\boldsymbol\phi_k\|<\infty$,
then
$${\rm dim}_HB_k(\boldsymbol\phi,\boldsymbol\alpha)\leq \max\{d_k,\beta_\infty\}.$$
\end{itemize}
\end{prop}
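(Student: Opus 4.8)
\textbf{Proof plan for Proposition \ref{up}.}
The strategy is to cover $B_k(\boldsymbol\phi,\boldsymbol\alpha)$ by cylinders on which the multi-Birkhoff sums $S_n\boldsymbol\phi_k$ are close to $n\boldsymbol\alpha_k$, and to estimate the $s$-dimensional Hausdorff content of these covers for $s$ slightly above $d_k$ (in case (a)) or $\max\{d_k,\beta_\infty\}$ (in case (b)). The new feature compared to the uniformly expanding case is that pointwise Lyapunov exponents may vanish, so we cannot directly work with cylinders of a fixed Birkhoff scale. Instead I would first replace $B_k(\boldsymbol\phi,\boldsymbol\alpha)$ by $B_k(\boldsymbol\phi,\boldsymbol\alpha)\cap J^*$, which is legitimate since by Lemma \ref{B} the complement $J\setminus J^*$ is countable and hence has zero Hausdorff dimension. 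On $J^*$ the weak expansion estimate of Lemma \ref{w-exp} provides, for each point, infinitely many times $n$ at which the covering cylinder $\varDelta_\omega$ with $\omega\in E^n$ satisfies $\inf_{\varDelta_\omega}|(f^n)'|\geq\lambda$ for the fixed constant $\lambda=\inf_{\varDelta\setminus V}|f'|>1$. This uniform lower bound on the derivative along a subsequence of times is what substitutes for global uniform expansion.

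Next I would set up the thermodynamic bookkeeping. For a parameter $s>d_k$ and an auxiliary $\beta$, the free energy $F_\beta(\mu)=h(\mu)-\beta\chi(\mu)$ controls pressure-type sums $\sum_{\omega\in E^n}\exp(\sup_{\varDelta_\omega}(-\beta\log|(f^n)'|))$ over admissible words, via the variational principle for countable Markov shifts (\cite{MauUrb01,Sar99} style) applied to the induced system; here finite irreducibility and uniform decay of cylinders are used to pass between $f|_J$ and its symbolic model, and mild distortion ($\log|f'|\in\mathcal F$) controls the distortion of $|(f^n)'|$ on cylinders. The constraint $\|\tfrac1n S_n\boldsymbol\phi_k-\boldsymbol\alpha_k\|<\epsilon$, which holds on the covering cylinders by definition of $B_k$, lets me restrict the pressure sum to words whose associated empirical distributions $\mu$ satisfy $\|\int\boldsymbol\phi_k\,d\mu-\boldsymbol\alpha_k\|<\epsilon$, so that $\dim(\mu)\leq d_k(\epsilon)$; this forces $h(\mu)\leq d_k(\epsilon)\chi(\mu)$, and choosing $\beta$ appropriately (roughly $\beta$ between $d_k$ and $\delta_0$ in case (a), and $\beta>\beta_\infty$ near $\beta_\infty$ in case (b)) makes these pressures finite and in fact the relevant sums summable. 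The hypothesis $d_k<\delta_0$ in \eqref{advantage} is precisely what guarantees there is room to pick such a $\beta$; in case (b) boundedness of $\boldsymbol\phi_k$ removes the unbounded-distortion obstruction and lets $\beta_\infty$ enter, while in case (a) the $D_1(\phi_i)<\infty$ part of the definition of $\mathcal F$ absorbs the unboundedness at the cost of not improving below $d_k$.

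The covering argument itself is a Bowen-type stopping-time construction \cite{Bow79}: starting from a large scale $N$, for each $x\in B_k(\boldsymbol\phi,\boldsymbol\alpha)\cap J^*$ I select the first time $n\geq N$ guaranteed by Lemma \ref{w-exp}, obtaining a cylinder $\varDelta_\omega$ of diameter comparable to $|(f^n)'x|^{-1}\leq\lambda^{-(n-n_0+1)}\to0$, so that these cylinders do form arbitrarily fine covers; the collection of all such $\omega$ over all $x$ is a countable (indeed at each level finite-by-finite-irreducibility-bounded) family, and its $s$-content is bounded by the pressure sums above, which tend to $0$ as $N\to\infty$ once $s$ exceeds the target value. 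Letting $\epsilon\to0$ and then $s\downarrow$ the target gives the stated bounds. I expect the main obstacle to be the technical interplay between the non-uniform stopping times from Lemma \ref{w-exp} and the distortion/pressure estimates: because the stopping time depends on the orbit, the cylinders in the cover have varying lengths and one must carefully organize the sum over them (e.g.\ by first fixing the level $n$, then summing geometric series in the "waiting" contributions, using $|\varDelta_\omega|=o(\text{anything})$ uniformly from uniform decay), and one must ensure the empirical-measure estimate $\|\int\boldsymbol\phi_k\,d\mu-\boldsymbol\alpha_k\|<\epsilon$ genuinely transfers from the single-orbit Birkhoff constraint to the word-by-word pressure sum — this is where Lemma \ref{katok} and Lemma \ref{nonergodic} (approximation of invariant measures by finite cylinder data) will be invoked to close the loop between combinatorial sums over $E^n$ and the supremum over $\mathcal M(f)$ defining $d_k$.
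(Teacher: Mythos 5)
Your overall architecture (restrict to $J^*$ via Lemmas \ref{B} and \ref{w-exp}, cover $B_k(\boldsymbol\phi,\boldsymbol\alpha)\cap J^*$ by cylinders on which the Birkhoff constraint holds, bound the $s$-content by pressure sums controlled through the variational principle for the induced systems, with Lemma \ref{f-union}-type finiteness in case (a) and $\beta>\beta_\infty$ in case (b)) matches the paper. But there is a genuine gap at exactly the point where the parabolic case differs from the uniformly expanding one. From the Birkhoff constraint on a covering family $G_n\subset E^n$ you extract a measure $\nu_n\in\mathcal M(f)$ with $\left\|\int\boldsymbol\phi_k\,d\nu_n-\boldsymbol\alpha_k\right\|<\epsilon$ and hence $\dim(\nu_n)\leq d_k(\epsilon)-\epsilon$; for $\beta$ slightly above $d_k$ this gives only $F_\beta(\nu_n)\leq-(\beta-d_k(\epsilon)+\epsilon)\chi(\nu_n)$, and the variational principle gives roughly $\log|G_n|^{\beta}\leq F_\beta(\nu_n)\,n+o(n)$. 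Because of the neutral periodic points, $\chi(\nu_n)$ can tend to $0$ as $n\to\infty$, so $F_\beta(\nu_n)$ is negative but not uniformly so, and the sums $\sum_n|G_n|^{\beta}$ need not converge. Your reading of the hypothesis \eqref{advantage} as supplying ``room to pick $\beta$ between $d_k$ and $\delta_0$'' does not close this: no choice of $\beta$ alone yields a uniform negative free-energy bound. The paper's actual use of $d_k<\delta_0$ (Lemma \ref{ab}) is to fix one expanding measure $\mu^*$ with $\dim(\mu^*)>d_k$ and $F_{d_k(\epsilon_0)}(\mu^*)\geq0$, and then to replace each $\nu_n$ by the convex combination $\xi_n=(1-\tfrac{\epsilon}{7A_k})\nu_n+\tfrac{\epsilon}{7A_k}\mu^*$, which still satisfies the Birkhoff constraint with $2\epsilon$ but has $\chi(\xi_n)\geq\tfrac{\epsilon}{7A_k}\chi(\mu^*)$ bounded below uniformly in $n$; this forces $F_{d_k(\epsilon)}(\xi_n)\leq-\tfrac{\epsilon^2}{7A_k}\chi(\mu^*)$ and hence the geometric decay $|G_n|^{d_k(\epsilon)}\leq\exp(-\epsilon^2\chi(\mu^*)n/(8A_k))$ that makes the covering sums summable. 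Some such device is indispensable and is absent from your plan.

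A related error feeds into this: you assert that at the stopping times given by Lemma \ref{w-exp} the covering cylinder has diameter comparable to $|(f^n)'x|^{-1}\leq\lambda^{-(n-n_0+1)}$. Lemma \ref{w-exp} gives only $\inf_{\varDelta_\omega}|(f^n)'|\geq\inf_{\varDelta\setminus V}|f'|>1$, a single uniform constant, not an exponential bound in $n$ --- that is the whole content of non-uniform expansion. Its role is purely qualitative (it guarantees the extracted measures are expanding, $\chi(\nu_n)>0$); the fineness of the covers comes from uniform decay of cylinders, and the geometric decay of the content sums must come from the free-energy estimate above, not from cylinder-size bounds. If your summability argument implicitly relies on $|\varDelta_\omega|\lesssim\lambda^{-n}$, it is circular.
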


%An idea of the proof of Proposition \ref{up} is to build a family of induced systems
%which altogether capture part of the level set in consideration, and use them
%to construct a family of measures which join in the supremum in the definition of $d_k(\epsilon)$.
%Since we consider infinitely many induced systems, and 
%since there are neutral periodic points, Lyapunov exponents of measures constructed only with 
%the family of induced systems may not be bounded away from zero, and as a result cannot be used
%for obtaining upper bounds of Hausdorff dimension.
%Hence, we consider affine combinations of measures constructed with the family of induced systems
%with a prefixed measure $\mu^*$. This gives a family of measures whose Lyapunov exponents
%are uniformly bounded away from zero by a constant controlled by $\epsilon$.

 \begin{proof}%[Proof of Proposition \ref{up}]
 The proofs of (a) and (b) involve the same set of ideas that is outlined as follows.
%By Lemma \ref{B} and the countable stability of Hausdorff dimension, to verify (a) it suffices to show 
%$\dim_H(B_k(\boldsymbol\phi,\boldsymbol\alpha)\cap J^*\cap\varDelta_a)\leq d_k$
%for each $a\in S$. 
For a subset $G$ of $E^*$ and $\beta\in\mathbb R$, put
$$|G|^{\beta}=\sum_{\omega\in G} |\varDelta_\omega|^{\beta}.$$
%m_\beta(G)$
For each $a\in S$ and sufficiently small $\epsilon>0$, we will construct a family $\{G_n\}_n$ of subsets of $E^*$ with increasing
uniform word lengths such that
 the corresponding family of cylinders covers  
$B_k(\boldsymbol\phi,\boldsymbol\alpha)\cap J^*\cap\varDelta_a$,
and satisfies $\lim_{n\to\infty}|G_n|^{d_k(\epsilon)}=0$.
%By the uniform decay of cylinders, we derive the upper bound of Hausdorff dimension.
We would like to apply the thermodynamic formalism 
for the induced system associated with each $G_n$, to
 obtain an upper bound of $|G_n|^{d_k(\epsilon)}$ in terms of the Lyapunov exponent of a
certain expanding measure. 
The problem is that such a measure depends on $n$,
and can have arbitrary small Lyapunov exponent 
due to the existence of neutral periodic points.
Hence, we fix below 
an expanding measure with sufficiently large dimension,
 and use it to construct a family of expanding measures whose Lyapunov exponents are bounded away from zero.
 %Let us start with choosing such a measure.
\begin{lemma}\label{ab}
There exists an expanding measure 
 $\mu^*\in\mathcal M(f)$ such that
 $\phi_i\in L^1(\mu^*)$ for every $1\leq i\leq k$ and
\begin{equation*}
 d_k<\dim(\mu^*).\end{equation*}
 \end{lemma}
\begin{proof}
By virtue of \eqref{advantage}
and Lemma \ref{nonergodic}, 
there exists an ergodic expanding measure 
 $\mu$ such that $d_k<\dim(\mu).$
If $\phi_i\in L^1(\mu)$ for every $1\leq i\leq k$,
then put $\mu^*=\mu$. Otherwise, 
we approximate $\mu$ with a finite number of cylinders in the sense of Lemma \ref{katok}, and glue them together using the finite irreducibility 
to construct another expanding measure $\mu^*$ whose
entropy and Lyapunov exponent are arbitrarily close to those of $\mu$. Since the support of $\mu^*$ is
contained in the union of finitely many cylinders, 
$\phi_i\in L^1(\mu^*)$ for every $1\leq i\leq k$.
 \end{proof}
 Fix an expanding measure $\mu^*$ as in Lemma \ref{ab}.
 Put
\begin{equation}\label{eq20}
A_k=\max\left\{\left\|\int\boldsymbol\phi_k d\mu^*\right\|,2\|\boldsymbol\alpha_k\|,1\right\}>0.\end{equation}
Fix $\epsilon_0\in(0,7A_k)$ such that
 \begin{equation}\label{eq--1}
 F_{d_k(\epsilon_0)}(\mu^*)\geq0.\end{equation}
Put
$$e_k(\epsilon)=\max\{d_k(\epsilon),\beta_\infty+\epsilon\}.
$$

    \begin{lemma}\label{horse}
For any $\epsilon\in(0,\min\{\epsilon_0,4/3\})$ there exists $n(\epsilon)\geq1$ such that if
$n\geq n(\epsilon)$, $a\in S$ and $G$ is a non-empty subset of $ E^{n+1}(a,a)$ 
such that for every $\omega\in G$,
 \begin{equation}\label{horse-eq}
 \inf_{\varDelta_\omega}|(f^n)'|>1
 \ \text{ and } \
 \sup_{\varDelta_\omega} \left\|\frac{1}{n}S_{n}\boldsymbol\phi_k-\boldsymbol\alpha_k\right\|<\frac{\epsilon}{2},\end{equation}
then the following holds:
\begin{itemize}
\item[(a)] If $G$ is a finite set, 
then 
 $$ |G|^{d_k(\epsilon)}\leq \exp\left(-\frac{\epsilon^2 \chi(\mu^*)n}{8A_k}\right).$$
\item[(b)]  If $\sup_\varDelta\|\boldsymbol\phi_k\|<\infty$, 
then 
 $$
 |G|^{e_k(\epsilon)}\leq \exp\left(-\frac{\epsilon^2 \chi(\mu^*)n}{8A_k}\right).$$
 \end{itemize}
\end{lemma}

We finish the proof of Proposition \ref{up} assuming the conclusion of Lemma \ref{horse}. 
Let $\epsilon\in(0,\min\{\epsilon_0,4/3\})$ and $n\geq n(\epsilon)$. 
We assume $n$ is large enough so that
\begin{equation}\label{eq-L1}
D_n(\log|f'|)<\epsilon^3 n\ \text{ and }\ D_{n}(\phi_i)<\frac{\epsilon n}{4}\ \  1\leq \forall i\leq k.\end{equation}
Let $a\in S$. Put
%$$H^n=\left\{
%\begin{array}{ll}\omega\in \bigcup_{b\in S}E^{n}(a,b)\colon \varDelta_\omega\cap J'\neq\emptyset,& 
%\inf_{\varDelta_\omega}|(f^n)'|>1,
%\\&\left\|\frac{1}{n}S_n\boldsymbol\phi_k(z)-\boldsymbol\alpha_k\right\|<\frac{\epsilon}{4}\quad\text{for some }z\in\varDelta_\omega\end{array}\right\}.$$
$$H^n=\left\{
  \begin{tabular}{l}
 $\omega\in \bigcup_{b\in S}E^{n}(a,b)\colon
 \displaystyle{\inf_{\varDelta_\omega}|(f^n)'|>1,}
    \quad
    \displaystyle{  
    \inf_{\varDelta_\omega}\left\|\frac{1}{n}S_n\boldsymbol\phi_k(z)-\boldsymbol\alpha_k\right\|<\frac{\epsilon}{4}}$
    %\quad\text{for some }z\in\varDelta_\omega$\\
  \end{tabular}\right\}.$$
%For each $p\in\{0,\ldots,n-1\}$ define
%\begin{equation}\label{hna}
%H^{n,p}=\left\{\omega\in H^{n}\colon
%\#\{j\in\{0,\ldots,n-1\}\colon f^j\varDelta_\omega\subset \varDelta\setminus V\}=p\right\}.\end{equation}
%Notice that elements of $\bigcup_{p=1}^{n-1}H^{n,p}$ are pairwise disjoint.

%The rest of the proof of Proposition \ref{up} consists of two parts.
\begin{proof}[Proof of Proposition \ref{up}$(a)$]
%We first treat the case $\sup_\varDelta\|\boldsymbol\phi_k\|=\infty$.
The next lemma asserts that 
one can cover the level set with countably many subsets with `finite ranges'.
\begin{lemma}\label{f-union}
If $\sup_\varDelta\|\boldsymbol\phi_k\|=\infty,$ then
$B_k(\boldsymbol\phi,\boldsymbol\alpha)$
is decomposed into a countable union of %pairwise disjoint
subsets $B_{k,p}(\boldsymbol\phi,\boldsymbol\alpha)$ ($p=1,2,\ldots$)
%then there exists a countable family %$\{B_{k,p}(\boldsymbol\phi,\boldsymbol\alpha)\}_{p=1}^\infty$
%of subsets of $B_k(\boldsymbol\phi,\boldsymbol\alpha)$ 
such that
%$$B_k(\boldsymbol\phi,\boldsymbol\alpha)=\bigcup_{p=1}^\infty %B_{k,p}(\boldsymbol\phi,\boldsymbol\alpha),$$
%and 
for each $p\geq1$ and $n\geq1$ there 
exists a finite union $C_{n,p}$ of $1$-cylinders 
such that
$f^n(B_{k,p}(\boldsymbol\phi,
\boldsymbol\alpha))\subset C_{n,p}$.
\end{lemma}
\begin{proof}
Let $1\leq i\leq k$ be such that $\sup_\varDelta|\phi_i|=\infty.$ For each integer $t\geq1$ put
$$B_k(\boldsymbol\phi,\boldsymbol\alpha,t)=\left\{x\in
B_k(\boldsymbol\phi,\boldsymbol\alpha)\colon\left|\frac{1}{n}S_n\phi_i(x)-\alpha_i\right|\leq1
\ \ \forall n\geq t\right\}.$$
Note that
$B_k(\boldsymbol\phi,\boldsymbol\alpha)=\bigcup_{t=1}^\infty B_k(\boldsymbol\phi,\boldsymbol\alpha,t).$
The unboundedness of $\phi_i$ implies that
if $n\geq t$ then $f^n(B_k(\boldsymbol\phi,\boldsymbol\alpha,t))$ is contained
in a finite union of $1$-cylinders, the number of which may depend on $n$.
The set $I=\bigcup_{t=1}^\infty\{t\}\times E^t$ %\{(t,\omega)\colon t\geq1, \omega\in E^t\}$
is a countably infinite set. Write $I=\{a_p\}_{p=1}^\infty$ and
define
$B_{k,p}(\boldsymbol\phi,\boldsymbol\alpha)=B_{k}(\boldsymbol\phi,\boldsymbol\alpha,t)\cap\varDelta_{\omega}$
where $(t,\omega)=a_p.$
Then $\{B_{k,p}(\boldsymbol\phi,\boldsymbol\alpha)\}_{p=1}^\infty$ has the desired properties.
\end{proof}
By Lemma \ref{w-exp} and Lemma \ref{f-union}, for each $p\geq1$ with
$B_{k,p}(\boldsymbol\phi,\boldsymbol\alpha)\neq\emptyset$
there exists a finite set $H_{p}^n\subset H^n$ such that
 $$B_{k,p}(\boldsymbol\phi,\boldsymbol\alpha)\cap J^*\cap\varDelta_a\subset
 \bigcup_{n=N}^\infty \varDelta H^{n}_p.$$
 %\bigcup\left\{\varDelta_\omega\colon \omega\in\bigcup_{n=N}^\infty H^{n}_p\right\}.$$
For each $\omega\in H_p^{n}$ fix $\lambda(\omega)\in\Lambda$ and  $z(\omega)\in \varDelta_\omega$
satisfying 
\begin{equation}\label{eq-L2}
\omega\lambda(\omega) a\in E^*\ \text{ and }\ \left\|\frac{1}{n}S_n\boldsymbol\phi_k(z(\omega))-\boldsymbol\alpha_k\right\|<\frac{\epsilon}{4}.\end{equation}
Using \eqref{eq-L1} and \eqref{eq-L2} we have
\begin{align*}
\sup_{\varDelta_{\omega\lambda(\omega)}}\left\|S_{n+|\lambda(\omega)|}\boldsymbol\phi_k-(n+|\lambda(\omega)|)\boldsymbol\alpha_k\right\|\leq&
\sup_{\varDelta_{\omega\lambda(\omega)}}\left\|S_{n}\boldsymbol\phi_k-S_{n}\boldsymbol\phi_k(z(\omega))\right\|\\
&+\left\|S_{n}\boldsymbol\phi_k(z(\omega))-n\boldsymbol\alpha_k \right\|\\
&+\sup_{\varDelta_{\omega\lambda(\omega)}}\left\|S_{|\lambda(\omega)|}\boldsymbol\phi_k\circ f^n-|\lambda(\omega)|\boldsymbol\alpha_k\right\|\\
<&\frac{\epsilon n}{2}+\sup_{\varDelta_{\lambda(\omega)}}\left\|S_{|\lambda(\omega)|}\boldsymbol\phi_k\right\|+
|\lambda(\omega)|\cdot\left\|\boldsymbol\alpha_k\right\|\\
<& \epsilon(n+|\lambda(\omega)|),
\end{align*}
where the last inequality holds for sufficiently large $n$
since $\Lambda$ is a finite set by the finite irreducibility.
For $0\leq q\leq|\Lambda|$ put 
 $$ H_p^n(q)=\{\omega\in H_p^{n}\colon |\lambda(\omega)|=q\}.$$
 Pick $q_0\in\{0,\ldots,|\Lambda|\}$ which maximizes the quantity
 $|\{\omega\lambda(\omega) a\colon \omega\in  H_p^n(\cdot)\}|^{d_k(\epsilon)}.$
 %$$ 
%\sup_{0\leq q\leq|\Lambda|} \sum_{w\in  H_q^n(a)} |\varDelta_{w\lambda a}|^{d_k(\epsilon)}\leq 
% \sum_{w\in   H_{q_0}^n(a)} |\varDelta_{w\lambda a}|^{d_k(\epsilon)}.$$
 Put $$G_n=\{\omega\lambda(\omega) a\colon \omega\in  H_p^n(q_0)\}.$$ 
  Put $\rho=\min_{\lambda\in\Lambda}|\varDelta_{\lambda a}|>0$.
  For each $\omega\in H_p^n$ we have
%  $$\frac{|\varDelta_{w\lambda a}|}{|\varDelta_w|}\leq e^{\epsilon n}\frac{|f^n(\varDelta_{w\lambda a})|}{|f^n(\varDelta_w)|}
  %\leq \kappa^{-1}e^{\epsilon n}|\varDelta_{\lambda a}|\leq  \kappa^{-1}e^{\epsilon n}\sum_{\lambda\in\Lambda}|\varDelta_\lambda|
  %\leq  \kappa^{-1}e^{\epsilon n}\#\Lambda$$
    $$\frac{|\varDelta_{\omega\lambda(\omega) a}|}{|\varDelta_\omega|}\geq e^{-\epsilon^3 n}\frac{|f^n\varDelta_{\omega\lambda(\omega) a}|}
    {|f^n\varDelta_\omega|}
  \geq e^{-\epsilon^3 n}|\varDelta_{\lambda(\omega) a}|\geq e^{-2\epsilon^3 n}\rho,$$
  where the first inequality is from \eqref{eq-L1}.
 Hence
\begin{align*}\label{upeq0}
|H_p^{n}|^{d_k(\epsilon)}
&\leq C_{k,\epsilon,n}
|\left\{\omega\lambda(\omega)a\colon\omega\in H_p^{n}\right\}|^{d_k(\epsilon)}\\
&=C_{k,\epsilon,n}\sum_{q=0}^{|\Lambda|}
|\{\omega\lambda(\omega) a\colon \omega\in  H_p^n(q)\}|^{d_k(\epsilon)}\\
&\leq  C_{k,\epsilon,n}(|\Lambda|+1)
|G_n|^{d_k(\epsilon)},\end{align*}
where $C_{k,\epsilon,n}=(\rho^{-1} e^{2\epsilon^3 n})^{d_k(\epsilon)}$. 
Lemma \ref{horse}(a) applied to the finite set $G_n$ gives
% Hence
\begin{align*}
\sum_{n=N}^\infty |H_p^{n}|^{d_k(\epsilon)}
\leq  \rho^{-d_k(\epsilon)} (|\Lambda|+1) \sum_{n=N}^\infty   \exp\left(\left(2\epsilon^3 d_k(\epsilon) -\frac{\epsilon^2\chi(\mu^*)}{8A_k}\right)n\right).\end{align*}
The last sum is summable for $\epsilon$ small enough.
From the uniform decay of cylinders,
the Hausdorff $d_k(\epsilon)$-measure of the set
$B_{k,p}(\boldsymbol\phi,\boldsymbol\alpha)\cap J^*\cap\varDelta_a$ is finite,
and so its Hausdorff dimension does not exceed $d_k$.
%Letting $\epsilon\to0$ we obtain
%$\dim_H(B_{k,p}(\boldsymbol\phi,\boldsymbol\alpha)\cap J^*\cap\varDelta_a)\leq d_k$. 
Since $p\geq1$ is arbitrary, we obtain
$\dim_H(B_{k}(\boldsymbol\phi,\boldsymbol\alpha)\cap J^*\cap\varDelta_a)\leq d_k$
as required.
\end{proof}

\noindent{\it Proof of Proposition \ref{up}$(b)$.}
%In the case $\sup_\varDelta\|\boldsymbol\phi_k\|<\infty$,
Lemma \ref{w-exp} implies that 
for each integer $N\geq1$,
 $$B_k(\boldsymbol\phi,\boldsymbol\alpha)\cap J^*\cap\varDelta_a\subset\bigcup_{n=N}^\infty\varDelta H^n.
 %\bigcup\left\{\varDelta_\omega\colon \omega\in\bigcup_{n=N}^\inftyH^{n}\right\}.
 $$
  The rest of the proof is a
 repetition of the previous argument. We make use of Lemma \ref{horse}(b) instead of Lemma \ref{horse}(a), and replace   $B_{k,p}(\boldsymbol\phi,\boldsymbol\alpha)$ by $B_k(\boldsymbol\phi,\boldsymbol\alpha)$, $H^n_p$ by $H^n$, and  $d_k(\epsilon)$ by $e_k(\epsilon)$. 
 %replacing $d_k(\epsilon)$ by  $B_{k,p}(\boldsymbol\phi,\boldsymbol\alpha)$ $B_{k,p}(\boldsymbol\phi,\boldsymbol\alpha)$, $H^n_p$, $d_k(\epsilon)$ 
  %by $B_k(\boldsymbol\phi,\boldsymbol\alpha)$,  $H^n$, $\overline d_k(\epsilon)$ respectively 
 %and using Lemma \ref{horse}(b) instead of Lemma \ref{horse}(a).
 %This completes the proof of Proposition \ref{up}.
 \end{proof}

\begin{proof} [Proof of Lemma \ref{horse}]
Let $\epsilon\in(0,\min\left\{\epsilon_0,4/3\right\})$ and 
 $a\in S$. Let $n\geq 1$ and let $G$ be the non-empty subset of $E^{n+1}(a,a)$ in the statement of Lemma \ref{horse}.
 Put
  $\tilde X=\bigcap_{m=0}^\infty(\sigma^n)^{-m}[G]\subset X$,
  $\tilde\sigma=\sigma^{n}|_{\tilde X}$ and 
   $K=\pi\tilde X$, where $\pi$ is the coding map defined in Section \ref{CM}.
  Then $\tilde X$ is topologically conjugate to the full shift on $\#G$-symbols.
 The induced map
 $f^n|_K\colon K\to K$ 
 is topologically semi-conjugate to this full shift, namely $\pi|_{\tilde X}$ is one-to-one except on 
 countable number of points where it is two-to-one, and the following diagram commutes:
  \[
  \begin{CD}
     \tilde X @>{\tilde\sigma}>> \tilde X \\
  @V{\pi|_{\tilde X}}VV    @VV{\pi|_{\tilde X}}V \\
     K   @>{f^n}|_{K}>>  K.
  \end{CD}\]
   %Put $\psi=-\log|Df\circ\pi|$ and
 Define the induced potential $\tilde\psi\colon \tilde X\to\mathbb R$ given by
 $$\tilde\psi=-\log|(f^n)'\circ\pi|.$$
%We use $\tilde{\cdot}$ to denote objects associated with the induced map $\widest\sigma$.
Fix $z\in K$.
For $\beta\in\mathbb R$ we have
  \begin{align*}\sum_{x\in \tilde\sigma^{-m}z}\exp\left(\beta\sum_{j=0}^{m-1}\tilde\psi(\tilde\sigma^jx)\right)
&\geq\left(\inf_{z'\in K}\sum_{x\in \tilde\sigma^{-1}z'}e^{\beta\tilde\psi(x)}\right)^m\geq\left(e^{-\beta D_n(\log|f'|)}
|G|^{\beta}\right)^m.\end{align*}
Taking logarithms, dividing by $m$ and then letting $m\to\infty$, we have %plugging the result into the previous equality gives
$$\liminf_{m\to\infty}\frac{1}{m}\log\sum_{x\in \tilde\sigma^{-m}z}\exp\left(
\beta\sum_{j=0}^{m-1}\tilde\psi(\tilde\sigma^jx)\right)
\geq\log |G|^{\beta}-\beta D_{n}(\log|f'|).$$
That the  lower-limit is actually a limit  follows from sub-additivity. We show that this limit is equal to the pressure $P(\beta\tilde\psi)$ of the potential
$\beta\tilde\psi$ with respect to $\tilde \sigma$ as defined in \cite{MauUrb01}.
For every $m\geq1$ we have
$$D_m(\log|(f^n)'|_{K}|)\leq D_{mn}(\log|f'|)=o(mn).$$ Therefore, the Markov map $f^n|_{K}$ with  Markov partition $\{\varDelta_\omega\}_{\omega\in G}$
has mild distortion. 
%  For all  $x,y\in K$ in the same $(n+1)$-cylinder,
 % $\tilde \psi(x)-\tilde\psi(y)\leq D_{n}<\infty$ holds.
 %For all $x$, $y\in K$ such that ${\tilde\sigma}^ix$, ${\tilde\sigma}^iy$ are contained
 %in the same $(n+1)$-cylinder for every $i=0,\ldots,m-1$,
 %$\sum_{i=0}^{m-1}\tilde\psi(\tilde\sigma^ix)-\tilde\psi(\tilde\sigma^iy)\leq D_{mn}=o(m)$ holds.
By a slight modification of the proof of \cite[Theorem 1.2]{MauUrb01} we obtain 
 \begin{equation}\label{press}P(\beta\tilde\psi) =
 \lim_{m\to\infty}\frac{1}{m}\log\sum_{x\in \tilde\sigma^{-m}z}\exp\left(\beta\sum_{j=0}^{m-1}\tilde
 \psi(\tilde\sigma^jx)\right).\end{equation}
 Moreover, the pressure satisfies  the variational principle 
 \begin{equation}\label{vp}P(\beta\tilde\psi)=\sup_{\tilde\mu\in \mathcal M(\tilde\sigma)}\left\{
 \tilde h(\tilde\mu)+\beta\int\tilde\psi d\tilde\mu\colon\int\tilde\psi d\tilde\mu>-\infty  \right\},\end{equation}
where $\mathcal M(\tilde\sigma)$ denotes the space of $\tilde\sigma$-invariant measures
 endowed with the weak*-topology,
 and $\tilde h(\tilde\mu)$ the entropy of $\tilde\mu\in \mathcal M(\tilde\sigma)$
 with respect to $\tilde\sigma$.

%\begin{lemma}{\rm (Variational Principle).}\label{VP}
%Let $X$ be the countable full shift and $\phi\colon X\to\mathbb R$ satisfies
%$\sup\phi<\infty$, \textcolor{red}{$D_1(\phi)<\infty$, $D_n(\phi)=o(n)$}.
%Then 
%$$P(\phi)=\sup_{\mu\in\mathcal M(\sigma)}\left\{h(\mu)+\int\phi d\mu\colon\ \int\phi d\mu>-\infty\right\}.$$
%\end{lemma}
%\begin{proof}
%A proof was given in \cite[Theorem 1.2]{MauUrb01}
%in the case $D_1(\phi)<\infty$ and $\phi$ is uniformly continuous.
%A slight modification of this proof shows the equality
%in the case $D_n(\phi)=o(n)$.
%\end{proof}

The rest of the proof consists of two parts.
We first prove Lemma \ref{horse}(a). Assume $G$ is a finite set. For each $m\geq1$
and $x\in \tilde\sigma^{-m}z$, we have
$\#(\tilde\sigma^{-m}z)=(\#G)^m$ and
$\left|\sum_{j=0}^{m-1}\tilde
 \psi(\tilde\sigma^jx)\right|\leq m\sup_{\varDelta G}\log|(f^n)'|$.
 Hence for every $\beta\in\mathbb R$,
 $$P(\beta\tilde\psi)\leq\sup_{m\geq1}\frac{1}{m}\log\sum_{x\in \tilde\sigma^{-m}z}\exp\left(\beta\sum_{j=0}^{m-1}\tilde
 \psi(\tilde\sigma^jx)\right)\leq \log\#G+\sup_{\varDelta G}|(f^n)'|^{\beta}<\infty.$$
Substituting $\beta=d_k(\epsilon)$ into \eqref{vp} gives
$$\infty> P(d_k(\epsilon)\tilde\psi)\geq\log  |G|^{d_k(\epsilon)}-C_{k,\epsilon,n}',$$
where $C_{k,\epsilon,n}'=d_k(\epsilon) D_{n}(\log|f'|)$.
Pick $\tilde\nu\in\mathcal M(\tilde\sigma)$ such that $\int\tilde\psi d\tilde\nu>-\infty$, and
\begin{equation*}
\tilde h(\tilde\nu)+d_k(\epsilon)\int\tilde\psi d\tilde\nu\geq
\log |G|^{d_k(\epsilon)}-\epsilon-C_{k,\epsilon,n}'.\end{equation*}
The measure $\nu=(1/n)\sum_{j=0}^{n-1}f^j_*(\pi|_{\tilde X})_*\tilde\nu$
is in $\mathcal M(f)$.
Since $\sup_{[G]}\tilde\psi<0$ 
from the first inequality in \eqref{horse-eq},
$\int\tilde\psi d\tilde\nu<0$ and so
 $\chi(\nu)>0$ holds.
The second inequality in \eqref{horse-eq} gives %for each $i\in\{1,\ldots,k\}$
%we have $\left|\int S_n \phi_id\tilde\mu-\alpha_in\right|<\epsilon n/2,$
%and so 
\begin{equation}\label{eq21}
\left\|\int\boldsymbol\phi_kd\nu-\boldsymbol\alpha_k\right\|\leq
\frac{\epsilon}{2}.\end{equation}
Hence, by the definition of $d_k(\epsilon)$ we have $$F_{d_k(\epsilon)}(\nu)\leq-\epsilon\chi(\nu)<0.$$
This yields
\begin{equation}
\begin{split}\label{eq--4}
0>F_{d_k(\epsilon)}(\nu)n
&=\tilde h(\tilde\nu)+d_k(\epsilon)\int\tilde\psi d\tilde\nu\\
&\geq
\log |G|^{d_k(\epsilon)}-\epsilon-C_{k,\epsilon,n}'.\end{split}
\end{equation}
The Lyapunov exponent of $\nu$ may become arbitrarily close to $0$
as $n$ increases. 
To circumvent this problem, we introduce another expanding measure
$$\xi=\left(1-\frac{\epsilon}{7A_k}\right)\nu+\frac{\epsilon}{7A_k}\mu^*,$$
which is in $\mathcal M(f)$ and satisfies
\begin{align*}
    \left\|\int\boldsymbol\phi_k d\xi-\boldsymbol\alpha_k\right\|\leq&
    \left(1-\frac{\epsilon}{7A_k}\right)\left\|\int\boldsymbol\phi_k d\nu-\boldsymbol\alpha_k\right\|+\frac{\epsilon}{7A_k}\left\|\int\boldsymbol\phi_kd\nu\right\|\\
    &+\frac{\epsilon}{7A_k}\left\|\int\boldsymbol\phi_kd\mu^*\right\|\\
    \leq&\frac{\epsilon}{2}+\frac{\epsilon}{7A_k}\left\|\int\boldsymbol\phi_kd\nu\right\|+\frac{\epsilon}{3}\quad \text{by \eqref{eq20} and \eqref{eq21}}\\
    <&\frac{5\epsilon}{6}+\frac{\epsilon}{7A_k}\left(\|\boldsymbol\alpha_k\|+
\frac{\epsilon}{2}\right)\quad \text{by \eqref{eq21}}\\
<&\frac{5\epsilon}{6}+\frac{\epsilon}{14}+\frac{\epsilon^2}{14}\quad \text{by \eqref{eq20}}\\
<&\epsilon.
\end{align*}
The last inequality holds if $\epsilon<4/3$. Therefore,
$d_k(\epsilon)\geq0$ and
$$F_{d_k(\epsilon)}(\xi)\leq-\epsilon\chi(\xi)\leq-\frac{\epsilon^2}{7A_k}\chi(\mu^*).$$
On the other hand, from \eqref{eq--1} and \eqref{eq--4},
\begin{align*}
F_{d_k(\epsilon)}(\xi)n=&
\left(1-\frac{\epsilon}{7A_k}\right)F_{d_k(\epsilon)}
(\nu)n+\frac{\epsilon}{7A_k}
F_{d_k(\epsilon)}(\mu^*)n\\
\geq&
\log  |G|^{d_k(\epsilon)}-\epsilon-C_{k,\epsilon,n}',\end{align*}
and therefore
\begin{align*}
\log  
|G|^{d_k(\epsilon)}&\leq
F_{d_k(\epsilon)}(\xi)n+\epsilon+C_{k,\epsilon,n}'\\
&\leq-\frac{\epsilon^2}{7A_k}\chi(\mu^*) n+\epsilon+C_{k,\epsilon,n}'\\
&\leq-\frac{\epsilon^2\chi(\mu^*) n}{8A_k}.\end{align*}
The last inequality holds for $n$ large enough. This completes the proof of Lemma \ref{horse}(a).
% Since $D_n(\psi)=o(n)$, for any $\epsilon\in(0,\epsilon_0)$ there exists $n_0\geq1$ such that
%\begin{equation}\label{eq--2}\epsilon+d_k(\epsilon) D_n(\psi)\leq \frac{\epsilon^2\chi(\mu^*) n}{6A}
%\ \text{ for every }n\geq n_0.\end{equation}

We are  left to prove Lemma \ref{horse}(b). If $G$ is a finite set,
then the desired inequality is a consequence of Lemma \ref{horse}(a)
and $e_k(\epsilon)\geq d_k(\epsilon)$. If $G$ is a countably infinite set,
then the variational principle does not preclude the case
where both numbers in \eqref{vp} are $\infty$.
We claim this is not the case for $\beta=e_k(\epsilon)$.
Indeed,
 for any measure $\tilde\mu\in\mathcal M(\tilde\sigma)$ with $\int\tilde\psi d\tilde\mu>-\infty$,
the measure $\mu=(1/n)\sum_{j=0}^{n-1}f^j_*(\pi|_{\tilde X})_*\tilde\mu$ is in $\mathcal M(f)$ and satisfies
   \begin{align*}
   F_{e_k(\epsilon)}(\mu)n=
   \tilde h(\tilde\mu)+e_k(\epsilon)\int\tilde\psi d\tilde\mu,
\end{align*}
by Abramov-Kac's formula (\cite[Theorem 2.3]{PesSen08} \cite[Theorem 5.1]{Zwe05}).
This implies $$P(e_k(\epsilon)\tilde\psi)\leq n\cdot\sup\{F_{e_k(\epsilon)}(\mu)\colon\mu\in\mathcal M(f)\}.$$
Since 
 $e_k(\epsilon)>\beta_\infty$, the
  right-hand side is finite.
We repeat the same argument as before, replacing $d_k(\epsilon)$ by $e_k(\epsilon)$.
This completes the proof of Lemma \ref{horse}.
 \end{proof}

%We repeat the argument in the proof of Proposition \ref{up}(a), replacing
%$d_{k(\epsilon)}$ by $d_\epsilon$ and $\mu^*$ by $\nu^*$. It follows that
%${\rm dim}_H(B_k(\boldsymbol\phi,\boldsymbol\alpha)\cap J')\leq d_\epsilon$
%for every $k\geq1$, and therefore
%${\rm dim}_H(B(\boldsymbol\phi,\boldsymbol\alpha)\cap J')\leq d_\epsilon$.\qed
 \subsection{Lower bound on dimension}\label{keylow}
The next proposition will be used to obtain lower bounds on the Hausdorff dimension of level sets.

\begin{prop}\label{lowd}
Let $f\colon\varDelta\to M$ be a non-uniformly expanding Markov map
which is finitely irreducible and has uniform decay of cylinders.
Let $\boldsymbol\phi\in\mathcal{F}^{\mathbb N}$,
$\boldsymbol\alpha\in\mathbb R^{\mathbb N}$ and
let $\{\mu_i\}_{i=1}^\infty$ be a sequence of ergodic expanding measures
such that 
%$\mu_i\in\mathcal M_{\rm e}(\boldsymbol\phi_i)$ holds for %every $i\geq1$,
$$\lim_{i\to\infty}\left\|\int\boldsymbol\phi_i d\mu_i-\boldsymbol\alpha_i\right\|=0.$$
Then   $B(\boldsymbol\phi,\boldsymbol\alpha)\neq\emptyset$ and
\begin{equation*}
b_{\boldsymbol\phi}(\boldsymbol\alpha)\geq \limsup_{i\to\infty}\dim(\mu_i).\end{equation*}
\end{prop}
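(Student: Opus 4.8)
The goal is a lower bound for the Hausdorff dimension of the level set $B(\boldsymbol\phi,\boldsymbol\alpha)$, using only the hypothesis that there is a sequence of ergodic expanding measures $\mu_i$ whose expected values $\int\boldsymbol\phi_i\,d\mu_i$ approach $\boldsymbol\alpha_i$. The standard technique here is a Moran-type (or concatenation) construction: from the measures $\mu_i$ one builds a single point (indeed a Cantor-like subset) of $B(\boldsymbol\phi,\boldsymbol\alpha)$ whose dimension is at least $\limsup_i \dim(\mu_i)$. The first step is to pass from each $\mu_i$ to finite combinatorial data. Using Lemma \ref{katok} applied to $\mu_i$ (with $k=i$), one obtains for every small $\epsilon$ a word length $\ell_i$, symbols $a_i,b_i$, and a finite set $F_i \subset E^{\ell_i}(a_i,b_i)$ with $\frac{1}{\ell_i}\log\#F_i$ close to $h(\mu_i)$, with all cylinders $\varDelta_\omega$, $\omega\in F_i$, having $\frac{1}{\ell_i}S_{\ell_i}\boldsymbol\phi_i$ uniformly close to $\int\boldsymbol\phi_i\,d\mu_i$, and (using that $\mu_i$ is expanding, after discarding a negligible fraction) with $\frac{1}{\ell_i}\log\inf_{\varDelta_\omega}|(f^{\ell_i})'|$ close to $\chi(\mu_i)$. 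Thus each block $F_i$ carries entropy $\approx h(\mu_i)$, Lyapunov scale $\approx \chi(\mu_i)$, and Birkhoff data $\approx\boldsymbol\alpha_i$.

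\textbf{The concatenation scheme.} Next I would use finite irreducibility to glue the blocks: between $F_i$ (ending in $b_i$) and $F_{i+1}$ (starting in $a_{i+1}$) insert a fixed connecting word $\lambda_i \in \Lambda$ with $b_i\lambda_i a_{i+1}\in E^*$; since $\Lambda$ is finite, the connecting words contribute bounded length and bounded distortion per splice. One then chooses a rapidly increasing sequence of repetition counts $N_1 \ll N_2 \ll \cdots$ and forms the Moran set consisting of all points whose coding begins with $N_1$ freely chosen words from $F_1$, then a connector, then $N_2$ words from $F_2$, then a connector, and so on. Because $N_{i+1}$ is chosen enormous compared with the total length accumulated through stage $i$, the Birkhoff averages $\frac1n S_n\boldsymbol\phi_k$ of any point in this Cantor set, read along the natural subsequence of times, converge to $\boldsymbol\alpha_k$ for every fixed $k$ (the tail blocks $F_i$ with $i\ge k$ have $\frac{1}{\ell_i}S_{\ell_i}\boldsymbol\phi_k \to \boldsymbol\alpha_k$, and the finitely many early blocks and all connectors are washed out); one must also check convergence along all intermediate times $n$, which follows from the uniform distortion bounds $D_n(\phi_j)=o(n)$ together with the growth rate of $N_i$. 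This shows the Cantor set, hence $B(\boldsymbol\phi,\boldsymbol\alpha)$, is non-empty.

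\textbf{Dimension lower bound.} For the dimension estimate I would put a natural Bernoulli-like measure $\nu$ on the Moran set (uniform over the choices within each $F_i$-block) and apply the mass distribution principle. For a ball of radius comparable to a cylinder $\varDelta_\eta$ appearing at stage $i$, the $\nu$-mass is $\prod (\#F_j)^{-N_j}$ over the completed blocks times a correction for the partial block, while the radius is estimated from below via the product of the $\inf|(f^{\ell_j})'|^{-1}$ using the mild-distortion/uniform-decay control; taking the ratio $\log\nu(\varDelta_\eta)/\log|\varDelta_\eta|$ and letting $i\to\infty$ gives a local lower pointwise dimension at least $\liminf$ over large $j$ of $\frac{\log\#F_j}{\ell_j \chi(\mu_j)} \approx \frac{h(\mu_j)}{\chi(\mu_j)} = \dim(\mu_j)$. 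Selecting the sequence $\mu_i$ along a subsequence realizing $\limsup_i \dim(\mu_i)$, and choosing the $F_i$ so their entropy/Lyapunov ratios stay within $\epsilon$ of that value from stage $i$ onward, yields $\dim_H B(\boldsymbol\phi,\boldsymbol\alpha)\ge \limsup_i\dim(\mu_i)-O(\epsilon)$, and then letting $\epsilon\to0$. \textbf{The main obstacle} I anticipate is the bookkeeping that makes the radius-versus-mass comparison uniform: one must control the distortion across the boundaries between $F_i$-blocks and the inserted connectors $\lambda_i$, ensure the balls used in the mass distribution principle are genuinely comparable to cylinders despite $M$ being an interval or circle (so that cylinders are genuine subintervals and nested), and handle the possibility that $\chi(\mu_j)$ is small — which slows the geometric shrinking and forces $N_{j+1}$ to be taken even larger, but does not affect the dimension ratio. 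None of this requires new ideas beyond the cited Moran construction of \cite{Pes97} and the thermodynamic estimates already available, but the estimates must be assembled carefully.
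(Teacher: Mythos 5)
Your proposal follows essentially the same route as the paper's proof: Katok-type approximation of each $\mu_i$ by a finite set of cylinders via Lemma \ref{katok} (including control of entropy, Birkhoff sums, and Lyapunov scale), gluing blocks with connectors supplied by finite irreducibility, a Moran construction with rapidly growing repetition counts, and the mass distribution principle applied to a Bernoulli-like limit measure, with the same bookkeeping issues you flag (washing out connectors and early blocks, intermediate times, distortion across splices). The only detail the paper handles that you do not mention is the preliminary reduction to $h(\mu_i)>0$ (by mixing with a fixed expanding measure), which is needed so that each block set has more than one element and the limit measure gives zero mass to cylinder boundaries.
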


%Although the statement of Proposition \ref{lowd} resembles that of \cite[Theorem 3.4]{FanJorLiaRam16}, there some differences in their proofs.
%The proof of the latter essentially uses a Bernoulli approximation of invariant measures (including dimension), and 
%a mere extension of this to our setting does not work well.
%Indeed, applying Lemma \ref{katok} to each measure $\mu_i$, one can construct a Bernoulli measure 
%for some iterate of $f$ which yields an $f$-invariant measure $\mu_i'$ with similar entropy and Lyapunov exponent to that of $\mu_i$.
%However, in our setting it can happen that $\dim(\mu_i')$ is not close to $\dim(\mu_i)$.
%Therefore, we need to dispense with the Bernoulli approximation. 

%  An  idea of a proof of this lemma, which traces back to Takens $\&$ Verbitskiy \cite{TakVer03},
%  is to approximate each $\mu_n$ with a horseshoe in the sense of Lemma \ref{katok}, and 
% then construct a set of points which wander around these horseshoes longer and longer times as $n$ increases,
%in such a way that their Lyapunov exponents become $\alpha$.

\begin{proof}[Proof of Proposition \ref{lowd}]
The proof consists of two steps.
We first approximate each measure $\mu_i$ with a finite collection of cylinders in the sense of Lemma \ref{katok},
and use the finite irreducibility to glue orbits together to construct a Borel subset $\Gamma$ 
such that $\dim_H\Gamma\geq\limsup_{i\to\infty}\dim(\mu_i)$ in Step 1.
 Next we show in Step 2 that
 $\Gamma$ is contained in $B(\boldsymbol\phi,\boldsymbol\alpha)$.
\medskip

\noindent{\it Step 1: Construction of a Borel set $\Gamma$.}
We begin by claiming that we may  assume $h(\mu_i)>0$ for every $i\geq1$ without any loss of generality.
To see this, fix an expanding measure $\nu$ and define a sequence $\{\nu_i\}_{i=1}^\infty$ of expanding measures as follows:
if $h(\mu_i)>0$ then $\nu_i=\mu_i$. If $h(\mu_i)=0$, then take $\delta_i\in(0,1)$ 
 and define $\nu_i=(1-\delta_i)\mu_i+\delta_i\nu$ so that
 $\lim_{i\to\infty}$$\left\|\int\boldsymbol\phi_i d\nu_i-\boldsymbol\alpha_i\right\|=0$
 and $\limsup_{i\to\infty}\dim(\mu_i)=\limsup_{i\to\infty}\dim(\nu_i).$
 The claim follows from Lemma \ref{nonergodic}.

Choose a subsequence of $\{\mu_i\}_{i=1}^\infty$ so that $\{\dim(\mu_i)\}_{i=1}^\infty$ 
converges to the maximal possible limit. 
Let $\Lambda$ be the finite subset of $E^*$ 
given by the finite irreducibility of $f$.
 By taking a further subsequence and relabeling the indices if necessary, we may assume
 the following holds for every $i\geq1$:
 \begin{equation}\label{equa1}
 h(\mu_i)>\frac{1}{i};\end{equation} 
 \begin{equation}\label{equa2}
 \left\|\int\boldsymbol\phi_i d\mu_i-\boldsymbol\alpha_i\right\|<\frac{1}{i}.\end{equation}
  Put 
 $$c_i=\sup_{\lambda\in\Lambda}\sup_{\varDelta_{\lambda}}\left\|S_{|\lambda|}\boldsymbol\phi_i\right\|.$$
 Since $\boldsymbol\phi\in\mathcal F^{\mathbb N}$  and  $\Lambda$ is finite,
$c_i<\infty$ holds for every $i\ge 1$. Clearly, $c_i$ is monotone increasing in $i$.
 In view of Lemma \ref{katok}, fix
an integer $\ell_i\geq1$, 
symbols $a_i,b_i\in S$ and a finite subset
$F^{\ell_i}=F^{\ell_i}(a_i,b_i)$ of $E^{\ell_i}(a_i,b_i)$ 
for which the following holds for every $i\geq1$:
\begin{equation}\label{loweq1}
\ell_i\geq i;\end{equation}
\begin{equation}\label{eq-ell}
\frac{\ell_i}{i}\geq\sup_{\lambda\in\Lambda}\sup_{\varDelta_\lambda}\log|(f^{|\lambda|})'|;
\end{equation}
 \begin{equation}\label{important2}
ic_i+i|\Lambda|\cdot\left\|\boldsymbol\alpha_i\right\|\leq\frac{\ell_i}{i};
\end{equation}
\begin{equation}\label{qe1}\left|\frac{1}{\ell_i}\log \#F^{\ell_i}-h(\mu_i)\right|\leq\frac{1}{i};\end{equation}
\begin{equation}\label{qe3}\sup_{\varDelta F^{\ell_i}}\left\|\frac{1}{\ell_i}S_{\ell_i}\boldsymbol\phi_i
-\int \boldsymbol\phi_i d\mu_i\right\|\leq\frac{1}{i};
\end{equation}
\begin{equation}\label{qe2}\sup_{\varDelta F^{\ell_i}}\left|\frac{1}{\ell_i}\log|(f^{\ell_i})'|-\chi(\mu_i)\right|\leq\frac{1}{i}.
\end{equation}
For each $(b_i,a_i)$
fix $\lambda_{b_ia_i}\in\Lambda$ such that $b_i\lambda_{b_ia_i}a_i\in E^*$,
and for each $(b_{i},a_{i+1})$ fix
$\lambda_{b_ia_{i+1}}\in\Lambda$ such that $b_i\lambda_{b_ia_{i+1}}a_{i+1}\in E^*$.
%These words in $\Lambda$ are used to construct circulations within $\varDelta F^{\ell_i}(a_i,b_i)$,
%and transitions from $\varDelta F^{\ell_i}(a_i,b_i)$ to $\varDelta F^{\ell_{i+1}}(a_{i+1},b_{i+1})$
%respectively.
For each integer $t\geq0$ we define a finite subset $F^{\ell_i,t}$ of $E^*$ as follows.
Put $F^{\ell_i,0}=E^0$ and $F^{\ell_i,1}=F^{\ell_i}$.
For $t\geq2$, $F^{\ell_i,t}$ is the set of elements of $E^*$ of the form
$$\omega_1 \lambda_{b_ia_i} \omega_2\lambda_{b_ia_i}\cdots  \omega_{t-1}\lambda_{b_ia_i}\omega_{t}$$
with $\omega_1,\ldots,\omega_{t}\in F^{\ell_i}$.
Clearly we have
\begin{equation}\label{sls}
\#F^{\ell_i,t}= \left(\#F^{\ell_i}\right)^{t}. 
 \end{equation}
 
 Let $\{t_i\}_{i=1}^\infty$ be a sequence of positive integers to be determined later.
For integers $N\geq2$ and $s\geq0$,
 denote by $E^{t_1,\ldots,t_{N-1},s}$ the set of elements of $E^*$
of the form $$\eta_1\lambda_{b_1a_{2}} \eta_2\lambda_{b_2a_3}\cdots \eta_{N-1}\lambda_{b_{N-1}a_N} \eta_N $$ with
$\eta_i\in F^{\ell_i,t_i}$ for $i=1,\ldots,N-1$
and $\eta_N\in F^{\ell_N,s}$.
Clearly, all words in
   $E^{t_1,\ldots,t_{N-1},s}$ have the same length and
\begin{equation}\label{sls'}
\begin{split}
 \#E^{t_1,\ldots,t_{N-1},s}=\left(\prod_{i=1}^{N-1}\#F^{\ell_i,t_i}\right)\max\{\#F^{\ell_N,s},1\}.
 \end{split}\end{equation}
 For each integer $T\geq t_1$ there exists a unique $N=N(T)$ and $s=s(T)$ such that 
\begin{equation}\label{k}
T=t_1+\cdots+t_{N-1}+s,\ 0\leq s\leq t_N-1.\end{equation}
Set
$$\Gamma= \bigcap_{T=t_1}^{\infty}\overline{\varDelta E^{t_1,\ldots,t_{N(T)-1},s(T)}}.$$
This is an intersection of decreasing closed sets, and so a compact set.
For each $\omega\in  E^{t_1,\ldots,t_{N-1},s}$ fix
a point $x(\omega)\in \varDelta_\omega\cap \Gamma$. 
Let $\nu_T$ denote the uniform distribution on the finite set $\bigcup\{x(\omega)\colon\omega\in  
E^{t_1,\ldots,t_{N-1},s} \}.$
Let $\nu$ be an accumulation point of the sequence $\{\nu_T\}_{T=t_1}^\infty$
in the weak*-topology on the space of measures on $M$.
We show that
\begin{equation}\label{mass}
\liminf_{r\to 0}\frac{\log \nu (B(x,r)\cap\varDelta)}{\log r} 
\geq \lim_{i\to\infty}\dim(\mu_i)\quad\forall x\in \Gamma,
\end{equation}
where $B(x,r)$ denotes the connected open subset of $M$ of length $2r$ centered at $x$.
%=[x-r,x+r]$.
From the mass distribution principle and the compactness of $\Gamma$ it follows that
\begin{equation}\label{eqs}{\rm dim}_H\Gamma\geq\lim_{i\to\infty}\dim(\mu_i).\end{equation}

In order to show \eqref{mass}, for each integer $T\geq t_1$ in \eqref{k}
 define \begin{equation*}
r_{T}= \exp\left(-t_{N-1}\ell_{N-1}\left(\chi(\mu_{N-1})+ \frac{3}{N-1}\right)
-s\ell_{N}\left(\chi(\mu_N)+ \frac{2}{N}\right)\right).\end{equation*}
\begin{lemma}\label{choose}
One can choose $\{t_i\}_{i=1}^\infty$ inductively
so that $r_T\searrow 0$ as $T\nearrow\infty$.
\end{lemma}
\begin{proof}If $s<t_N-1$
then $N(T+1)=N(T)$ and $s(T+1)=s(T)+1$, and so
$$\frac{r_{T+1}}{r_T}=\exp\left(-\ell_{N}\left(\chi(\mu_N)+ \frac{2}{N}\right)\right)<1.$$
If $s=t_N-1$, then $N(T+1)=N(T)+1$ and $s(T+1)=0$, and so
$$
r_T= \exp\left( -t_{N-1}\ell_{N-1}\left(\chi(\mu_{N-1})+
\frac{3}{N-1}\right)
-(t_{N}-1)\ell_{N}\left(\chi(\mu_{N})+ \frac{2}{N}\right)\right),$$
and
$$r_{T+1}= \exp\left(-t_{N}\ell_{N}\left(\chi(\mu_{N})+ \frac{3}{N}\right)\right).$$
Therefore, by choosing $\{t_i\}_{i=1}^\infty $ inductively such that  $t_i$ is large enough
compared to $t_{i-1}$ one can make sure that
$r_{T+1}/r_T\leq1/2$ for every $T\geq t_1$.
\end{proof}

Let $r\in(0,r_{t_1}]$. Then there exists $T\geq t_1$ such that
\begin{equation}\label{ak}r_{T+1}<r\leq r_T.\end{equation}
For each $\omega\in E^{t_1,\ldots,t_{N-1},s}$ we have
\begin{equation}
\begin{split}\label{suplog}
\sup_{\varDelta_\omega}\log|(f^{|\omega|})'|&\leq\sum_{i=1}^{N-1} t_{i}\ell_{i}\left(\chi(\mu_{i})+\frac{2}{i}\right)+s \ell_{N}
\left(\chi(\mu_{N})+\frac{2}{N}\right) \\
&\leq t_{N-1}\ell_{N-1}\left(\chi(\mu_{N-1})+\frac{3}{N-1}\right)+s \ell_{N}
\left(\chi(\mu_{N})+\frac{2}{N}\right).
\end{split}
\end{equation}
The first inequality follows from \eqref{eq-ell} and \eqref{qe2}.
The second one holds provided
 each $t_i$ is chosen large enough
compared to $t_1,\ldots,t_{i-1}$.

     \begin{lemma}\label{inf-image}
  $\displaystyle{\inf_{\omega\in E^*} |f^{|\omega|}\varDelta_\omega|}>0.$
 %In particular, $\sup_\varDelta|Df|=\infty$.
 \end{lemma}
 \begin{proof}
 The infimum is equal to $\inf_{a\in S}|f\varDelta_a|$.
 Let $b\in E^*$. For each $a\in S$ there exists $\lambda\in\Lambda$
 such that $a\lambda b\in E^*$. Take one connected component of $(f^{|\lambda|+1}|_{\varDelta_a})^{-1}\varDelta_b$
 and denote it by $A$. Since $f^{|\lambda|}(fA)=\varDelta_b$, the mean value theorem implies
 $\sup_{[\lambda]}|(f^{|\lambda|})'||fA|\geq |\varDelta_b|$. Since $\Lambda$ is a finite set,
 $|f\varDelta_a|\geq|fA|\geq|\varDelta_b|/\sup_{\lambda\in\Lambda}\sup_{[\lambda]}|(f^{|\lambda|})'|>0$
 as required.
 \end{proof}

Let $\kappa>0$ denote the infimum in Lemma \ref{inf-image}. 
By the mean value theorem, Lemma \ref{inf-image} and \eqref{suplog},
\begin{align*}
|\varDelta_{\omega}|&\geq \frac{|f^{|\omega|}\varDelta_\omega|}{\sup_{\varDelta_\omega}|(f^{|\omega|})'|}\\
&\geq\kappa\exp\left(-t_{N-1}\ell_{N-1}\left(\chi(\mu_{N-1})+\frac{3}{N-1}\right)-s\ell_N\left(\chi(\mu_{N})+\frac{2}{N}\right)\right)\\
&=\kappa r_T.
\end{align*}
Hence, for any $x\in \Gamma$  we have 
%the number of cylinders
%corresponding to words in $E^{t_1,\ldots,t_{N-1},s}$
% which intersect $B(x,r)$ is at most
 \begin{equation}
 \begin{split}\label{covering}
   \#\left\{\omega \in E^{t_1,\dots ,t_{N-1},s}   \colon B(x,r)\cap\Delta_\omega\neq \emptyset
   \right\}&\le \frac{2r_{T}}{\inf
_{\omega\in   E^{t_1,\ldots,t_{N-1},s}} |\varDelta_{\omega}|}+2\\
&\leq 2(\kappa ^{-1}+1).
 \end{split}
 \end{equation}
%$$ \frac{2r_{T}}{\inf
%_{\omega\in   E^{t_1,\ldots,t_{N-1},s}} %|\varDelta_{\omega}|}+2\leq 2(1+\kappa ^{-1}).$$
\begin{lemma}\label{boundary}
For every $\omega\in E^{t_1,\ldots,t_{N-1},s}$ we have
$\nu(\partial\varDelta_\omega)=0$.
\end{lemma}
\begin{proof}
By construction, each set at the $T$-th level contains a finite number of sets at the $(T+1)$-th level, and this number of branches is independent of the sets at the
 $T$-th level.
%\textcolor{red}{($w\in E^{t_1,\ldots,t_{N(k)-1},s(k)}$,
%$\#\{w'\in E^*\colon ww'\in E^{t_1,\ldots,t_{N(k+1)-1},s(k+1)} \}$ is independent of $w$.}
From the positivity of entropy \eqref{equa1}
and \eqref{qe1} with $i=N-1$ we have 
 $\#F^{\ell_{N-1}}>1$. Hence,
the number of branches at the $T$-th lvel in the case $s<t_N-1$ 
is $\#F^{\ell_{N-1},t_{N-1}}>1$.
A similar argument shows that the number of branches
in the case $s=t_N-1$ is
$\#F^{\ell_{N},t_{N}}>1$.
For every $\omega\in E^{t_1,\ldots,t_{N-1},s}$
and for every $p\geq T$,
$$\nu_p(\varDelta_\omega)
=\frac{1}{\#(F^{\ell_1})^{t_1}\cdots
\#(F^{\ell_{N-1}})^{t_{N-1}}
\#(F^{\ell_N})^{s}}.$$
Since the number of branches is strictly bigger than $1$ and cylinders of fixed length have pairwise disjoint interiors, the claim follows.
\end{proof}
From Lemma \ref{boundary}, the weak*-convergence of measures gives
$\nu(\varDelta_{\omega})=\lim_{p\to\infty}\nu_p(\varDelta_{\omega})$
for every $\omega\in E^{t_1,\ldots,t_{N-1},s}$.
By  \eqref{covering} we then have
\begin{align*}
\nu(B(x,r)\cap\Gamma)\leq&\frac{2(\kappa ^{-1}+1)}{   (\#F^{\ell_{N-1}})^{t_{N-1}}(\# F^{\ell_N})^{s}    }\\
\leq& 2(\kappa ^{-1}+1)\times\\
&\exp\left(-t_{N-1}\ell_{N-1}\left(h(\mu_{N-1})-\frac{1}{N-1}\right)-
s\ell_{N}\left(h(\mu_{N})-\frac{1}{N}\right)\right).\end{align*}
By the definition of $r_T$ and  \eqref{ak} this yields
\begin{align*}
\frac{\log\nu (B(x,r)\cap\Gamma)}{\log r}\geq&\frac{\log\nu (B(x,r)\cap\Gamma)}{\log r_T}\\
\geq&\frac{t_{N-1}\ell_{N-1}\left(h(\mu_{N-1})-
1/(N-1)\right)+s\ell_{N}\left(h(\mu_{N})-1/N\right)}
{t_{N-1}\ell_{N-1}\left(\chi(\mu_{N-1})+3/(N-1)\right)
+s\ell_{N}\left(\chi(\mu_{N})+2/N\right)}\\
&+\frac{\log(2(\kappa ^{-1}+1))}{\log r_T}.\end{align*}
We have $T\to\infty$ as  $r\to 0$, and so  $N \to\infty$ by  \eqref{k}.
 Hence \eqref{mass} follows.
\medskip

 \noindent{\it Step 2: Verification of $\Gamma\subset B(\boldsymbol\phi,\boldsymbol\alpha)$.}
 It remains to fix $\{t_i\}_{i=1}^\infty$ so that
  $\Gamma\subset B(\boldsymbol\phi,\boldsymbol\alpha)$ holds,
  namely for all $x\in\Gamma$,
  $$\lim_{n\to\infty}\left\|\frac{1}{n}S_n\boldsymbol\phi_k(x)-\boldsymbol\alpha_k\right\|=0\quad\forall k\geq1.$$
For each $i\ge 2$ we choose $t_i$
large enough compared to $t_1,\ldots,t_{i-1}$ so that the following holds in addition to all the previously used inequalities:
%\begin{equation}\label{loweq3}t_i<t_{i+1};\end{equation}

\begin{equation}\label{loweq2}
\sum_{j=1}^i\frac{3t_j\ell_j}{j}\leq\frac{4t_i\ell_i}{i};
\end{equation}

\begin{equation}\label{loweq0}c_{i-1}+
\sup_{1\leq p\leq \ell_i}\sup_{\varDelta F^{\ell _i}}\|S_p\boldsymbol\phi_i\|
+(|\Lambda|+\ell_i)\left\|\boldsymbol\alpha_i\right\|\leq t_{i-1}.\end{equation}

 Let $k\geq1$.
  Let $T\geq t_1$ be such that $k\leq N-1$. 
  \begin{lemma}\label{equation-a}
Let $i\geq1$, $t\geq1$ be integers such that $\eta_i\in F^{\ell_i,t}$. Then
\begin{equation*}
\sup_{\varDelta_{\eta_i\lambda_{b_ia_{i+1}}}}\|S_{|\eta_i\lambda_{b_ia_{i+1}}|}\boldsymbol\phi_k-|\eta_i\lambda_{b_ia_{i+1}}|\boldsymbol\alpha_k\|\leq\frac{2t\ell_i}{i}+tc_k+t|\Lambda|\cdot\|\boldsymbol\alpha_k\|.\end{equation*}
 \end{lemma}
 \begin{proof}
 The definition of $c_k$ gives
 \begin{equation}\label{qe-3} \sup_{\varDelta_{\lambda_{b_ia_{i+1}}}}\|
S_{|\lambda_{b_ia_{i+1}}|}\boldsymbol\phi_k-|\lambda_{b_ia_{i+1}}|\boldsymbol\alpha_k \|<c_k+|\Lambda|\cdot\|\boldsymbol\alpha_k\|.\end{equation}
 The desired inequality for $t=1$ is a consequence of \eqref{equa2}, \eqref{qe3} and \eqref{qe-3}.
 In the case $t\geq2$, we have
 $\eta_i=\omega_1 \lambda_{b_ia_i} \omega_2\cdots  \lambda_{b_ia_i}\omega_{t}$ with
$\omega_1,\ldots,\omega_{t}\in F^{\ell_i}$ and $\lambda_{b_ia_i}\in\Lambda$.
 By \eqref{equa2} and \eqref{qe3}, for every $1\leq j\leq t-1$ we have
\begin{align*}\sup_{\varDelta_{\omega_j\lambda_{b_ia_i}}}\|
S_{|\omega_j\lambda_{b_ia_i}|}\boldsymbol\phi_k-|\omega_j\lambda_{b_ia_i}|\boldsymbol\alpha_k \|\leq&
\sup_{\varDelta_{\omega_j}}\|
S_{\ell_i}\boldsymbol\phi_k-\ell_i\boldsymbol\alpha_k \|
+\sup_{\varDelta_{\lambda_{b_ia_i}}}\|
S_{|\lambda_{b_ia_i}|}\boldsymbol\phi_k-|\lambda_{b_ia_i}|\boldsymbol\alpha_k \|\\
\leq&\sup_{\varDelta_{\omega_j}}\left\|
S_{\ell_i}\boldsymbol\phi_k -\ell_i\int\boldsymbol\phi_k d\mu_i\right\|+\left\|\ell_i\int\boldsymbol\phi_k d\mu_i-\ell_i\boldsymbol\alpha_k\right\|\\
&+c_k+|\Lambda|\cdot\|\boldsymbol\alpha_k\|\\
\leq&\frac{2\ell_i}{i}+c_k+|\Lambda|\cdot\|\boldsymbol\alpha_k\|.\end{align*}
Summing this inequality over all $1\leq j\leq t$
yields the desired inequality.
The case $t=1$ is covered by the same argument.
 \end{proof}

  Recall that each
 $\omega\in E^{t_1,\ldots,t_{N-1},s}$ has the form 
 $$\omega= \eta_1\lambda_{b_1a_2}   \cdots \eta_{k-1}\lambda_{b_{k-1}a_k}\cdots \eta_{N-1}\lambda_{b_{N-1}a_{N}}\eta_N,$$ with
 $\eta_i\in F^{\ell_i,t_i}$, $\lambda_{b_ia_{i+1}}\in\Lambda$ for $1\leq i\leq N-1$ and
 $\eta_N\in F^{\ell_N,s}$. Use the triangle inequality and then split
$$\sup_{\varDelta_\omega}\|S_{|\omega|}\boldsymbol\phi_k-|\omega|\boldsymbol\alpha_k\|\leq I+I\!I,$$ where 
\begin{align*}
I=\sup_{\varDelta_{\omega_I}}\|S_{| \omega_I|}\boldsymbol\phi_k-| \omega_I|\boldsymbol\alpha_k\|\ \text{ and }\
I\!I=\sup_{\varDelta_{\omega_{I\!I}}}\|S_{|\omega_{I\!I}|}\boldsymbol\phi_k-|\omega_{I\!I}|\boldsymbol\alpha_k\|,
\end{align*}
and $\omega_I$, $\omega_{I\!I}\in E^*$ are given by
$$\omega_I= \eta_1\lambda_{b_1a_2}   \cdots \eta_{k-1}\lambda_{b_{k-1}a_k}\ \text{ and }\ \omega_{I\!I}= \eta_k\lambda_{b_ka_{k+1}}   \cdots \eta_{N-1}\lambda_{b_{N-1}a_N}\eta_N.$$
%\begin{align*}
%I&=\sup_{\varDelta_\omega}\|S_{| \eta_1\lambda_{b_1a_2}   \cdots \eta_{k-1}\lambda_{b_{k-1}a_k}|}\boldsymbol\phi_k-| %\eta_1\lambda_{b_1a_2}   \cdots \eta_{k-1}\lambda_{b_{k-1}a_k}|\boldsymbol\alpha_k\|,\\
%I\!I&=\sup_{\varDelta_{\eta_k\lambda_{b_ka_{k+1}}   \cdots \eta_{N-1}\lambda_{b_{N-1}a_N}\eta_N}}\|S_{| %\eta_k\lambda_{b_ka_{k+1}}   \cdots \eta_{N-1}\lambda_{b_{N-1}a_N}\eta_N|}\boldsymbol\phi_k-| \eta_k
%\lambda_{b_ka_{k+1}}   \cdots \eta_{N-1}\lambda_{b_{N-1}a_N}\eta_N|\boldsymbol\alpha_k\|.
%\end{align*}
Applying Lemma \ref{equation-a} to $\eta_i\lambda_{b_ia_{i+1}}$ for $k\leq i\leq N-1$ in $\omega_{I\!I}$
 and
using once more Lemma \ref{equation-a} to deal with the last word $\eta_N$ yields
\begin{equation} \label{equation-b}
\begin{split}
I\!I\leq&
\sum_{i=k}^{N-1}\left(\frac{2t_i\ell_i}{i}+t_ic_i+t_i|\Lambda|\cdot\|\boldsymbol\alpha_k\| \right)+
s\left(\frac{2\ell_N}{N}+c_N+|\Lambda|\cdot\|\boldsymbol\alpha_k\|\right)\\
=&\sum_{i=k}^{N-1}\frac{2t_i\ell_i}{i}+\sum_{i=k}^{N-1}t_i\left(c_i+|\Lambda|\cdot\|\boldsymbol\alpha_k\|\right)+s\left(\frac{2\ell_N}{N}+c_N+|\Lambda|\cdot\|\boldsymbol\alpha_k\|\right)\\
\leq&\sum_{i=k}^{N-1}\frac{2t_i\ell_i}{i}+t_{N-1}(N-1)\left(c_{N-1}+
|\Lambda|\cdot\|\boldsymbol\alpha_k\|\right)+s\left(\frac{2\ell_N}{N}+c_N+|\Lambda|\cdot\|\boldsymbol\alpha_k\|\right)\\
\leq&\sum_{i=k}^{N-1}\frac{2t_i\ell_i}{i}+\frac{t_{N-1}\ell_{N-1}}{N-1}+\frac{3s\ell_N}{N}
\ \text{ by \eqref{important2}}\\
\leq&\sum_{i=k}^{N-1} \frac{3t_{i}\ell_{i}}{i}+\frac{3s\ell_N}{N}.
\end{split}\end{equation}
The same argument gives
$$I\leq\sum_{i=1}^{k-1}\frac{3t_i\ell_i}{i}.$$
  Now, let $x\in \Gamma$.
  For each integer $n\geq t_1\ell_1$ there exists an integer $T\geq t_1$ such that
   if
  $\omega\in E^{t_1,\ldots,t_{N-1},s}$
 and $x\in\varDelta_\omega$, then the following holds:
 \begin{equation}\label{equation-c}0\leq n-|\omega|\leq|\Lambda|+\ell_N;\end{equation}
\begin{equation}\label{equation-d} 
\|S_{n-|\omega|}\boldsymbol\phi_k(\sigma^{|\omega|}x)\|\leq c_{N-1}+\sup_{1\leq p\leq \ell_N}\sup_{\varDelta F^{\ell _N}}\|S_p\boldsymbol\phi_k\|.
\end{equation}
From \eqref{equation-b}, \eqref{equation-c}, \eqref{equation-d} we have
\begin{align*}
\|S_n\boldsymbol\phi_k(x)-n\boldsymbol\alpha_k\|\leq&\|S_{|\omega|}\boldsymbol\phi_k(x)-|\omega|\boldsymbol\alpha_k\|
+\|S_{n-|\omega|}\boldsymbol\phi_k(\sigma^{|\omega|}x)-(n-|\omega|)\boldsymbol\alpha_k\|\\
\leq&\sum_{i=1}^{N-1} \frac{3t_{i}\ell_{i}}{i}+\frac{3s\ell_N}{N}
+ c_{N-1}\\
&+\sup_{1\leq p\leq \ell_N}\sup_{\varDelta F^{\ell _N}}\|S_p\boldsymbol\phi_k\|+
(|\Lambda|+\ell_N)\|\boldsymbol\alpha_k\|.\end{align*}
On the right-hand side, we use \eqref{loweq2} to bound the first two terms, and \eqref{loweq0} to bound the remaining terms. We have 
\begin{align*}
\|S_n\boldsymbol\phi_k(x)-n\boldsymbol\alpha_k\|
\leq&\frac{4t_{N-1}\ell_{N-1}}{N-1}+\frac{3s\ell_N}{N}+t_{N-1}\\
\leq&\frac{4n}{N-1}+\frac{t_{N-1}\ell_{N-1}}{N-1}\ \text{  by \eqref{loweq1}, \eqref{equation-c}}\\
\leq&\frac{5n}{N-1}\ \text{ by \eqref{equation-c}}.
\end{align*}
Since $N$ increases as $n$ does, it follows that $\lim_{n\to\infty}\|(1/n)S_n\boldsymbol\phi_k(x)-\boldsymbol\alpha_k\|=0$.
Since $k\geq1$ is arbitrary, this completes the proof of Proposition \ref{lowd}.
\end{proof}

 \subsection{Proof of the Main Theorem}\label{end}
 Let $f\colon\varDelta\to M$ be a non-uniformly expanding Markov map which is finitely irreducible, has mild distortion and uniform decay of cylinders. 
  Let $\boldsymbol\phi\in\mathcal{F}^{\mathbb N}$ and
   $\boldsymbol\alpha\in\mathbb R^{\mathbb N}$.
     % \subsubsection*{Proof of Main Theorem(a)} Assume that
  %   for any $\epsilon>0$ and any integer $k\geq1$ there %exists an expanding measure  
  %   $\mu\in\mathcal M(f)$ such that
  % $\left\|\int\boldsymbol\phi_k %d\mu-\boldsymbol\alpha_k\right\|<\epsilon.$
   %       Lemma \ref{nonergodic} ensures that such a %measure can be taken to be ergodic.
  % Proposition \ref{lowd} shows
  %   $B(\boldsymbol\phi,\boldsymbol\alpha)\neq\emptyset.$
  %   Conversely, if $x\in %B(\boldsymbol\phi,\boldsymbol\alpha)$
  %   then using the finite irreducibility and %$D_1(\phi_i)<\infty$ for each $i\geq1$,
  %   one can find for any $\epsilon>0$ and any integer %$k\geq1$ an $f$-invariant measure 
  %   $\mu$ supported on a single periodic orbit 
  %   such that
  %   $\left\|\int\boldsymbol\phi_k %d\mu-\boldsymbol\alpha_k\right\|<\epsilon.$ If $\mu$ is %an expanding measure, then 
  %   it is done. Otherwise,  
  %   pick an expanding measure $\nu\in\mathcal M(f)$ 
  %   and $\delta\in(0,1)$ such that the expanding measure %$\mu'=(1-\delta)\mu+\delta\nu$
  %   satisfies
  %   $\left\|\int\boldsymbol\phi_k %d\mu'-\boldsymbol\alpha_k\right\|<\epsilon$,
  %   and proceed in the same way. \qed

\subsubsection*{Proof of Main Theorem(a)}  Assume that
     for any $\epsilon>0$ and any integer $k\geq1$ there exists a measure  
     $\mu\in\mathcal M(f)$ such that
   $\left\|\int\boldsymbol\phi_k d\mu-\boldsymbol\alpha_k\right\|<\epsilon.$
   Then there exists an expanding measure $\mu' \in \mathcal{M}(f)$
   such that
   $\left\|\int\boldsymbol\phi_k d\mu'-\boldsymbol\alpha_k\right\|<\epsilon.$
  By Lemma \ref{nonergodic}, there exists an ergodic expanding measure $\mu''$
   such that
   $\left\|\int\boldsymbol\phi_k d\mu''-\boldsymbol\alpha_k\right\|<\epsilon.$
   Proposition \ref{lowd} shows
     $B(\boldsymbol\phi,\boldsymbol\alpha)\neq\emptyset.$
     Conversely, if $x\in B(\boldsymbol\phi,\boldsymbol\alpha)$
     then using the finite irreducibility and $D_1(\phi_i)<\infty$ for each $i\geq1$,
     one can find for any $\epsilon>0$ and any integer $k\geq1$ a measure 
     $\mu\in\mathcal M(f)$ supported on a single periodic orbit 
     such that
     $\left\|\int\boldsymbol\phi_k d\mu-\boldsymbol\alpha_k\right\|<\epsilon.$ 
      \qed

\subsubsection*{Proof of Main Theorem(b)}
Assume \eqref{dim} and $B(\boldsymbol\phi,\boldsymbol\alpha)\neq\emptyset$.
Put $$d_\infty=\displaystyle{\lim_{k\to\infty}d_k},$$
where $d_k$ is the constant defined in Proposition \ref{up}.
Below we will verify $b_{\boldsymbol\phi}(\boldsymbol\alpha)=d_\infty$
%Since the limit in $k$ and the limit in $\epsilon$ in the definition of $d_k$
%are exchangeable, we obtain the desired formula. 
by showing that 
  the upper bound obtained in Section \ref{bsec} and
  the lower bound in Section \ref{keylow} coincide.
  We then show that if all $\phi_i$ are bounded then the constant $\beta_\infty$
  is a lower bound for $b_{\boldsymbol\phi}(\boldsymbol\alpha)$.  Lastly we show the  remaining assertions.

 We begin by claiming that
  \begin{equation}\label{lowbd}
  b_{\boldsymbol\phi}(\boldsymbol\alpha)\geq d_\infty.\end{equation}
 Indeed, by the definition of $d_\infty$, for every $c>0$ and every $k\ge 1$ there exists  $\mu_k\in\mathcal M(f)$ such that
$\dim(\mu_k)>d_\infty -c$ and
$\left\|\int\boldsymbol\phi_k d\mu_k-\boldsymbol\alpha_k\right\|<1/k.$ We may assume that each $\mu_k$ is expanding and, by Lemma \ref{nonergodic}, also  ergodic.  Then Proposition \ref{lowd} yields $b_{\boldsymbol\phi}(\boldsymbol\alpha)\geq d_\infty - c$. Since $c>0$ was arbitrary, we have thus  shown
$b_{\boldsymbol\phi}(\boldsymbol\alpha)\geq d_\infty$.

%Indeed, for otherwise 
%   there exists a constant $c>b_{\boldsymbol\phi}(\boldsymbol\alpha)$
% such that 
%$$c< \lim_{\epsilon\to0}\sup\left\{
%\dim(\mu)\colon\mu\in\mathcal M(f),\ 
%\left\|\int\boldsymbol\phi_k d\mu-\boldsymbol\alpha_k \right\|<\epsilon\right\}$$
%for each $k\geq 1$.
%$$c<\sup\left\{
%\frac{h(\mu)}{\chi(\mu)}\colon\mu\in\mathcal M^+(f),\ \text{ergodic},\ 
%\sup_{0\leq m\leq i}\left|\int\phi_md\mu-\alpha_m\right|\leq\frac{1}{i}\right\}.$$
%Then 
%Take an expanding measure $\mu_k\in\mathcal M(f)$ such that
%$\dim(\mu_k)>c$ and
%$\left\|\int\boldsymbol\phi_k d\mu_k-\boldsymbol\alpha_k\right\|<1/k.$
%By Lemma \ref{nonergodic}, such a measure can be take to be ergodic.
%Proposition \ref{lowd} yields $b_{\boldsymbol\phi}(\boldsymbol\alpha)\geq c$,
%a contradiction. We have shown
%$b_{\boldsymbol\phi}(\boldsymbol\alpha)\geq d_\infty$.

To prove  the reverse inequality we distinguish  two cases. First, 
we assume $\sup_\varDelta\|\boldsymbol\phi_{k_0}\|=\infty$ for some $k_0\geq1$.
The saturation \eqref{dim} implies
 $b_{\boldsymbol\phi}(\boldsymbol\alpha)\leq\delta_0$.
If this inequality is strict, then we have 
 $d_\infty\le b_{\boldsymbol\phi}(\boldsymbol\alpha)<\delta_0$ 
from \eqref{lowbd}, and
Proposition \ref{up}(a) gives $b_{\boldsymbol\phi}(\boldsymbol\alpha)\leq 
d_\infty$.  We claim that if $b_{\boldsymbol\phi}(\boldsymbol\alpha)=\delta_0$
then
$d_\infty=\delta_0$ holds,
for otherwise $ d_\infty<\delta_0$ and so there exists $\epsilon>0$ such that
$d_\infty+\epsilon<\delta_0$. Hence, we have 
$d_k<d_\infty +\epsilon<\delta_0$ for $k\geq k_0$ large enough and
Proposition \ref{up}(a)  gives the desired contradiction 
 $b_{\boldsymbol\phi}(\boldsymbol\alpha)
 \leq d_\infty+\epsilon<\delta_0$.
 
 Next we assume $\sup_\varDelta\|\boldsymbol\phi_k\|<\infty$ for every $k\geq1$.
 A slight modification of the argument in the previous case 
 with Proposition \ref{up}(b) shows
  $b_{\boldsymbol\phi}(\boldsymbol\alpha)
 \leq \max\{d_\infty,\beta_\infty\}.$
The reverse inequality 
 and the equalities $b_{\boldsymbol\phi}(\boldsymbol\alpha)
 = \max\{d_\infty,\beta_\infty\}=d_\infty$
 %\subsubsection*{\textcolor{blue}{{\bf Completed}}} Now moving on to the first variational formula,
% we will verify $b_{\boldsymbol\phi}(\boldsymbol\alpha)=\lim_{\epsilon\to0}d_\epsilon$.
%If $\lim_{\epsilon\to0}d_\epsilon<\delta_0$ then Proposition \ref{up}(b) gives
%$b_{\boldsymbol\phi}(\boldsymbol\alpha)\leq\lim_{\epsilon\to0}d_\epsilon$.
%If $\lim_{\epsilon\to0}d_\epsilon=\delta_0$, then %$b_{\boldsymbol\phi}(\boldsymbol\alpha)\leq\lim_{\epsilon\to0}d_\epsilon$ holds since $ %b_{\boldsymbol\phi}(\boldsymbol\alpha)\leq\dim_HJ\leq\delta_0$ from \eqref{dim}.
%On the other hand, Proposition \ref{lowd} implies
%$b_{\boldsymbol\phi}(\boldsymbol\alpha)\geq\lim_{\epsilon\to0}d_\epsilon$.
are consequences of \eqref{lowbd} and the next lemma which is an adaptation of 
\cite[Lemma 5.1]{FanJorLiaRam16} to our setting.
\begin{lemma}\label{fan'}
%Assume all $\phi_i$ are bounded.
Let $k\geq1$ be an integer, $\epsilon>0$ and $\mu\in\mathcal M(f)$ be an expanding measure
be such that
$$\left\|\int\boldsymbol\phi_kd\mu-\boldsymbol\alpha_k\right\|<\epsilon.$$
If  $\sup_\varDelta\|\boldsymbol\phi_k\|<\infty$, then there exists an expanding measure $\nu\in\mathcal M(f)$ such that
$$\dim(\nu)>\beta_\infty-\epsilon\ \text{ and }\
\left\|\int\boldsymbol\phi_kd\nu-\boldsymbol\alpha_k\right\|<2\epsilon.$$
\end{lemma}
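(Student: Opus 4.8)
The plan is to realize $\nu$ as a convex combination $\nu_\theta=(1-\theta)\mu+\theta\sigma$ of the given measure $\mu$ with an auxiliary measure $\sigma\in\mathcal M(f)$ whose dimension is nearly $\beta_\infty$ and whose Lyapunov exponent is enormous; a small weight $\theta\in(0,1)$ keeps the $\boldsymbol\phi_k$-average of $\nu_\theta$ within $2\epsilon$ of $\boldsymbol\alpha_k$, while the size of $\chi(\sigma)$ pushes $\dim(\nu_\theta)$ up to essentially $\dim(\sigma)$. First I would dispose of the vacuous case $\beta_\infty<\epsilon$ (which includes $\beta_\infty=-\infty$): here one simply takes $\nu=\mu$, since $\dim(\mu)\ge 0>\beta_\infty-\epsilon$ and $\|\int\boldsymbol\phi_k\,d\mu-\boldsymbol\alpha_k\|<\epsilon<2\epsilon$. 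So from now on assume $\beta_\infty\ge\epsilon>0$; note moreover $\beta_\infty\le 1$, because $F_1(\mu')=h(\mu')-\chi(\mu')\le 0$ for every $\mu'\in\mathcal M(f)$ (Ruelle's inequality, valid since $\log|f'|\ge 0$), hence $\sup F_1<\infty$.

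The heart of the argument is the construction of $\sigma$. Fix $\beta$ with $\beta_\infty-\epsilon/2<\beta<\beta_\infty$. By the definition of $\beta_\infty$ we have $\sup\{F_\beta(\mu')\colon\mu'\in\mathcal M(f)\}=\infty$, so there is a sequence $(\mu_n)$ in $\mathcal M(f)$ with $F_\beta(\mu_n)=h(\mu_n)-\beta\chi(\mu_n)\to\infty$. I would check that $\chi(\mu_n)\to\infty$: any measure with zero Lyapunov exponent is supported on the finite set $\{x_a\colon a\in\Omega\}$ and therefore has zero entropy, so $F_\beta$ vanishes on it; and since $h(\mu_n)\le\chi(\mu_n)$, if $\chi(\mu_n)$ stayed bounded along a subsequence then $F_\beta(\mu_n)$ would too, a contradiction. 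In particular $\mu_n$ is expanding for $n$ large, $F_\beta(\mu_n)>0$, and $\dim(\mu_n)=h(\mu_n)/\chi(\mu_n)=\beta+F_\beta(\mu_n)/\chi(\mu_n)>\beta$.

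Finally I would fix the weight. Put $B=\sup_\varDelta\|\boldsymbol\phi_k\|<\infty$; then $|\int\phi_i\,d\mu_n|\le B$ for every $i\le k$ and every $n$, so $\|\int\boldsymbol\phi_k\,d\mu_n-\boldsymbol\alpha_k\|\le B+\|\boldsymbol\alpha_k\|$ uniformly in $n$. Choose $\theta\in(0,1)$ small enough, depending only on $\epsilon$, $B$ and $\|\boldsymbol\alpha_k\|$, that $(1-\theta)\epsilon+\theta(B+\|\boldsymbol\alpha_k\|)<2\epsilon$. Then $\nu_n:=(1-\theta)\mu+\theta\mu_n$ satisfies $\|\int\boldsymbol\phi_k\,d\nu_n-\boldsymbol\alpha_k\|<2\epsilon$ for every $n$. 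With $\theta$ now fixed, affineness of $h$ and of $\chi$ gives
\[
\dim(\nu_n)=\frac{(1-\theta)h(\mu)+\theta h(\mu_n)}{(1-\theta)\chi(\mu)+\theta\chi(\mu_n)}>\frac{(1-\theta)h(\mu)+\theta\beta\chi(\mu_n)}{(1-\theta)\chi(\mu)+\theta\chi(\mu_n)},
\]
and the right-hand side tends to $\beta$ as $\chi(\mu_n)\to\infty$. Hence $\dim(\nu_n)>\beta-\epsilon/2>\beta_\infty-\epsilon$ for $n$ large, and taking $\nu=\nu_n$ for such an $n$ finishes the proof: $\nu\in\mathcal M(f)$ as a convex combination of measures with finite Lyapunov exponent, and $\chi(\nu)\ge(1-\theta)\chi(\mu)>0$, so $\nu$ is expanding.

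The one delicate point — and the only place the boundedness hypothesis $\sup_\varDelta\|\boldsymbol\phi_k\|<\infty$ is used — is the order of the quantifiers: the weight $\theta$ must be selected uniformly in $n$ from the $\boldsymbol\phi_k$-estimate \emph{before} $n$ is sent to infinity in the dimension estimate. Everything else is routine convexity bookkeeping, so I do not anticipate a genuine obstacle.
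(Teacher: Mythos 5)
Your proof is correct and follows essentially the same route as the paper: the paper also obtains $\nu$ as a convex combination of $\mu$ with auxiliary expanding measures whose dimensions approach $\beta_\infty$ and whose Lyapunov exponents tend to infinity, fixing the weight uniformly via the boundedness of $\boldsymbol\phi_k$ (this is the adaptation of \cite[Lemma 5.1]{FanJorLiaRam16} it alludes to). The only divergence is in how the auxiliary sequence is produced --- the paper adapts \cite[Lemma 2.5]{FanJorLiaRam16}, while you extract it directly from $\sup F_\beta=\infty$ for $\beta<\beta_\infty$ together with $h(\cdot)\le\chi(\cdot)$ on $\mathcal M(f)$ --- which is a clean, self-contained substitute.
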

\begin{proof}
One can adapt \cite[Lemma 2.5]{FanJorLiaRam16} to our setting
to show that there exists a sequence $\{\mu_n\}_{n=1}^\infty$
of expanding measures which are supported on a finite union of cylinders and such that $\lim_{n\to\infty}\dim(\mu_n)=\beta_\infty$
and $\lim_{n\to\infty}\chi(\mu_n)=\infty$.
Using this and arguing along the line of the proof of
 \cite[Lemma 5.1]{FanJorLiaRam16} proves the assertion of the lemma.%The proofs there work verbatim.
\end{proof}
%If $\sup_X\|\boldsymbol\phi_k\|<\infty$, then
%Lemma \ref{fan'} implies
%$$\lim_{k\to\infty}\lim_{\epsilon\to0}\max\left\{\sup\left\{
%\dim(\mu)\colon\mu\in\mathcal M(f),\ \left\|\int\boldsymbol\phi_k d\mu-\boldsymbol\alpha_k %\right\|<\epsilon\right\},\beta_\infty\right\}$$
%$$=\lim_{k\to\infty}\lim_{\epsilon\to0}\sup\left\{
%\dim(\mu)\colon\mu\in\mathcal M(f),\ \left\|\int\boldsymbol\phi_k d\mu-\boldsymbol\alpha_k %\right\|<\epsilon\right\}.$$
%\end{lemma}
%\begin{proof}
%Let $k\geq1$ be an integer and let $\epsilon>0$.
%Let $\mu\in\mathcal M_{\rm e}(f)$
%be such that
%$\left\|\int\boldsymbol\phi_k d\mu-\boldsymbol\alpha_k\right\|<\epsilon.$
%By Lemma \ref{fan'},
% there exists $\nu\in\mathcal M_{\rm e}(f)$ such that
%$\dim(\nu)>\beta_\infty-\epsilon$ and
%$\left\|\int\boldsymbol\phi_k d\nu-\boldsymbol\alpha_k\right\|<2\epsilon.$
%This implies the desired equality.
%\end{proof}
%and
%$$\lim_{\epsilon\to0}\lim_{k\to\infty}\max\left\{\sup\left\{
%\dim(\mu)\colon\mu\in\mathcal M_{\rm e}(f),\ \left\|\int\boldsymbol\phi_k d\mu-\boldsymbol\alpha_k %\right\|<\epsilon\right\},\beta_\infty\right\}$$
%$$=\lim_{\epsilon\to0}\lim_{k\to\infty}\sup\left\{
%\dim(\mu)\colon\mu\in\mathcal M_{\rm e}(f),\ \left\|\int\boldsymbol\phi_k d\mu-\boldsymbol\alpha_k %\right\|<\epsilon\right\}.$$
We are almost ready to complete the proof of the Main Theorem(b).
For the proof of the remaining assertions note that,
for each fixed $k\geq1$ we have $$\dim_HB(\boldsymbol\phi,\boldsymbol\alpha_k)=\lim_{\epsilon\to0}\sup\left\{
\dim(\mu)\colon\mu\in\mathcal M(f),\ \left\|\int\boldsymbol\phi_k d\mu-\boldsymbol\alpha_k \right\|<\epsilon\right\}.$$
Therefore $b_{\boldsymbol\phi}(\boldsymbol\alpha)=\dim_HB(\boldsymbol\phi,\boldsymbol\alpha)=\lim_{k\to\infty}\dim_HB(\boldsymbol\phi,\boldsymbol\alpha_k)$ holds,
proving the second assertion in (b).
%The fact that one may restrict the supremum in the conditional variational formula to compactly supported measures follows from 
%Lemma \ref{katok} and Lemma \ref{nonergodic}.
%\qed

%\subsubsection*{Proof of Main Theorem(c)}
%Assume \eqref{dim}. The first assertion follows from Main Theorem(b). 
Now assume in addition that $f$ has a neutral periodic point. 
%From Main Theorem(b) it follows that
%$\dim_HJ=\delta_0$, proving the first %assertion.}
%To conclude $\dim_HJ=\delta_0$, it suffices to show
%$\dim_HJ\geq \delta_0$. Let $\mu$ be an expanding measure. If $\mu$ is ergodic,
%then the definition of dimension implies $\dim_HJ\geq\dim(\mu)$.
%If $\mu$ is non-ergodic, then we use Lemma \ref{nonergodic} to find an ergodic expanding measure
%with similar entropy and similar Lyapunov exponent to those of $\mu$, and proceed in the same way.
%Since $\mu$ is an arbitrary expanding meausre, we obtain $\dim_HJ\geq \delta_0$.
Since $f$ is saturated,
there is an expanding measure with dimension arbitrarily close to $\delta_0$. Considering convex combinations
of such measures and measures supported on single neutral periodic orbits, we obtain an expanding measure with dimension arbitrarily close to $\delta_0$
and Lyapunov exponent arbitrarily close to $0$. From this and 
the first assertion in (b), the last one in (b) follows.
\qed

\section{Verifications of the assumptions in the Main Theorem}
%This section is geared to applications of the Main Theorem
%to concrete examples in the next section. 
In this section we provide sufficient conditions for the uniform decay of cylinders
and the saturation by expanding measures assumed in the Main Theorem.
%are given in Section \ref{vdecay} and Section \ref{dist-cont} respectively.
\subsection{Uniform decay of cylinders}\label{vdecay}
Let $f\colon\varDelta\to M$ be a non-uniformly expanding Markov map.
Inspired by \cite[Section 8]{MauUrb03},
we introduce the following condition:
 \begin{itemize}

      \item[(M3)] 
  there exists $s>1$ such that if  $a_1,a_2\in S$
  are such that $f \varDelta_{a_1}\supset \varDelta_{a_2}$ and
  $a_2$ is an expanding index,  or else $a_{1}\neq a_2$,
 then $\inf_{\varDelta_{a_1a_2}}|(f^2)'|\geq s.$
 
 \end{itemize}

 %In the application of the Main Theorem to Bowen-Series maps in Section 5, we will verify (M3) for some power of a given map for which all 
 %neutral periodic points are fixed points.

%\begin{remark}\label{rem4}
 % Condition (M3) 
% implies that any neutral periodic point is a fixed point.
 % \end{remark}

\begin{lemma}\label{uniform}
Let $f\colon\varDelta\to M$ be a non-uniformly expanding Markov map.
 If (M3) holds, then $f$ has uniform decay of cylinders.
\end{lemma}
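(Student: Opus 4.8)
The plan is to show that under (M3), the length of an $n$-cylinder decays geometrically along every other coordinate, with a uniform ratio $s^{-1}<1$ coming from the two-step expansion hypothesis. The delicate point is that neutral indices prevent us from getting expansion at a single step, so we pair up consecutive coordinates. First I would fix an admissible word $\omega=\omega_0\cdots\omega_{n-1}\in E^n$ and examine, for each $0\le j\le n-2$, the pair $(\omega_j,\omega_{j+1})$. Two cases arise: either $\omega_{j+1}$ is an expanding index, or $\omega_j\ne\omega_{j+1}$, and in either case (M3) gives $\inf_{\varDelta_{\omega_j\omega_{j+1}}}|(f^2)'|\ge s$. The only remaining case is $\omega_j=\omega_{j+1}$ with $\omega_{j+1}$ a neutral index; since there are only finitely many neutral indices and each neutral periodic point is periodic, a maximal run of a repeated neutral index has the form $a\,a\cdots a\,b$ with $b\ne a$ (the word cannot be eventually constant at a neutral index unless the cylinder degenerates), and the transition $(a,b)$ at the end of such a run again falls under (M3). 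Thus, walking along $\omega$ from left to right, I can select a collection of disjoint pairs of consecutive indices, at least $\lfloor (n-1)/K\rfloor$ many for some constant $K$ depending only on $\#\Omega$ and the periods of the neutral points, at each of which $|(f^2)'|\ge s$ on the relevant sub-cylinder.

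Next I would convert this into a length estimate. By the mean value theorem, for any $\omega\in E^n$,
\[
|\varDelta_\omega|\le \frac{|f^{n-1}\varDelta_\omega|}{\inf_{\varDelta_\omega}|(f^{n-1})'|}\le \frac{\sup_{a\in S}|f\varDelta_a|}{\inf_{\varDelta_\omega}|(f^{n-1})'|},
\]
and $|(f^{n-1})'|=\prod_{j=0}^{n-2}|f'\circ f^j|$ on $\varDelta_\omega$. Bounding the factors at the selected disjoint pairs below by $s$ (a product of $\lfloor (n-1)/K\rfloor$ terms each $\ge s^{1/1}$ after grouping, i.e. the product over a pair is $\ge s$) and the remaining factors below by $1$ — which is legitimate since $|f'|\ge 1$ everywhere for a non-uniformly expanding map — I get $\inf_{\varDelta_\omega}|(f^{n-1})'|\ge s^{\lfloor (n-1)/K\rfloor}$. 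Hence
\[
\sup_{\omega\in E^n}|\varDelta_\omega|\le \Big(\sup_{a\in S}|f\varDelta_a|\Big)\, s^{-\lfloor (n-1)/K\rfloor}\xrightarrow[n\to\infty]{}0,
\]
provided $\sup_{a\in S}|f\varDelta_a|<\infty$, which holds because $M$ is bounded. This is exactly uniform decay of cylinders.

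The main obstacle I anticipate is the bookkeeping around repeated neutral indices: one must argue carefully that in an admissible word a run $a^m$ of a neutral index $a$ is either followed by a different symbol (so that the pair $(a,a')$ with $a'\ne a$ at the boundary of the run triggers (M3)) or else the word is eventually constant at $a$, and in the latter situation the corresponding "cylinder" $\bigcap_j f^{-j}\varDelta_a$ shrinks to the neutral periodic point because that point is topologically repelling — so for words of finite length $n$ this tail contributes at most one long run and the count $\lfloor(n-1)/K\rfloor$ of good pairs is still linear in $n$. Making the constant $K$ explicit in terms of the finitely many neutral indices and their periods, and verifying that the selected pairs can indeed be taken pairwise disjoint (so that the product lower bound is valid), is the step that needs the most care; everything else is a routine distortion-free application of the mean value theorem.
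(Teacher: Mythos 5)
Your identification of the ``good pairs'' --- every consecutive pair except $(a,a)$ with $a$ a neutral index --- is correct, and so is the mean-value-theorem reduction. But the counting step is wrong, and with it the final estimate. A maximal run $a^m$ of a repeated neutral index can have \emph{any} length $m\le n$: its length is not bounded by any constant $K$ depending on $\#\Omega$ or on the periods of the neutral points (for the R\'enyi map, $1^m$ is admissible for every $m$ and $|\varDelta_{1^m}|\sim 1/m$). Inside such a run there are no good pairs at all, so for $\omega=a^n$ you get \emph{zero} good pairs, not $\lfloor (n-1)/K\rfloor$; correspondingly $\inf_{\varDelta_{a^n}}|(f^{n-1})'|$ stays bounded (the derivative is $1$ at the neutral point), and the decay of $|\varDelta_{a^n}|$ is typically only polynomial. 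Hence the geometric bound $\sup_{\omega\in E^n}|\varDelta_\omega|\le C\,s^{-\lfloor(n-1)/K\rfloor}$ that your argument culminates in is not merely unproven --- it is false for parabolic maps. Your remark that the infinite intersection $\bigcap_j f^{-j}\varDelta_a$ degenerates to the repelling neutral point does not rescue this: it gives no rate, it is never quantitatively used in your final inequality, and long runs need not sit only at the tail of $\omega$ (consider $a^{\lfloor n/2\rfloor}b\,a^{\lfloor n/2\rfloor-1}$, which has exactly two good pairs).

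The missing ingredient is a separate treatment of words dominated by a long neutral run. The paper first proves (Lemma \ref{shrink}) that $g(m)=\max_{a\in\Omega}|\varDelta_{a^m}|\to 0$ as $m\to\infty$ (by a soft argument: a nondegenerate interval fixed by a branch with $|f'|\equiv 1$ would produce infinitely many neutral points), and then splits each $\omega\in E^n$ according to whether its longest repeated-neutral block exceeds $\sqrt n$. If it does, $|\varDelta_\omega|\le g(\lfloor\sqrt n\rfloor)$ since $|f'|\ge 1$ pushes the estimate forward to the block; if it does not, there are at least of order $\sqrt n$ disjoint good pairs and (M3) gives $|\varDelta_\omega|\lesssim s^{-c\sqrt n}$. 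Both bounds tend to $0$ uniformly in $\omega$, which is all that uniform decay of cylinders requires. Your argument covers only the second regime, and the uniform-in-$\omega$ rate one actually obtains is sub-geometric (dictated by whichever of $g(\lfloor\sqrt n\rfloor)$ and $s^{-c\sqrt n}$ is larger), not geometric as you claim.
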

For a proof of Lemma \ref{uniform} we need the next lemma. For $a\in S$ such that $aa$ is admissible and $n\geq1$, let $a^n\in E^n$ denote the $n$-string of $a$.
\begin{lemma}\label{shrink}
Let $a\in \Omega$ be such that $aa$ is admissible. Then $|\varDelta_{a^n}|\to0$ as $n\to\infty$.  
 \end{lemma}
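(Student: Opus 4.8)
\noindent\textit{Proof strategy.}

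The plan is to exploit the identity $f_a(\varDelta_{a^n})=\varDelta_{a^{n-1}}$ together with the fact that $|f_a'|>1$ on $\varDelta_a$ away from the single point $x_a$. Since $aa$ is admissible, condition (M2) gives $f\varDelta_a\supset\varDelta_a$, hence $\varDelta_{a^{n-1}}\subset\varDelta_a\subset f_a(\varDelta_a)$; combined with $\varDelta_{a^n}=\varDelta_a\cap f^{-1}\varDelta_{a^{n-1}}$ and the injectivity of $f_a=f|_{\varDelta_a}$, this yields $\varDelta_{a^n}=f_a^{-1}\varDelta_{a^{n-1}}$, and therefore $f_a(\varDelta_{a^n})=\varDelta_{a^{n-1}}$ for every $n\ge2$. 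In particular each $\varDelta_{a^n}$ is a non-empty subinterval of $\varDelta_a$, and the sequence $(\varDelta_{a^n})_n$ is nested and decreasing.

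Since $f_a$ is a $C^1$ diffeomorphism, hence monotone, on the interval $\varDelta_a$, the change of variables formula gives
\[
|\varDelta_{a^{n-1}}|=|f_a(\varDelta_{a^n})|=\int_{\varDelta_{a^n}}|f_a'(x)|\,dx,
\]
so that
\[
|\varDelta_{a^{n-1}}|-|\varDelta_{a^n}|=\int_{\varDelta_{a^n}}\bigl(|f_a'(x)|-1\bigr)\,dx\ge0\qquad(n\ge2).
\]
Thus $(|\varDelta_{a^n}|)_n$ decreases to some limit $L\ge0$. Suppose, for contradiction, that $L>0$, and put $I=\bigcap_{n\ge1}\varDelta_{a^n}$. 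Then $I$ is a subinterval of $\varDelta_a$ of length $L>0$. Because $f$ is non-uniformly expanding, $|f_a'|>1$ on $\varDelta_a\setminus\{x_a\}$; since the single point $x_a$ carries no length, $c:=\int_I(|f_a'(x)|-1)\,dx>0$. As $\varDelta_{a^n}\supset I$ and the integrand is non-negative, $|\varDelta_{a^{n-1}}|-|\varDelta_{a^n}|\ge c$ for all $n\ge2$; summing over $n=2,\dots,N$ gives $|\varDelta_a|-|\varDelta_{a^N}|\ge(N-1)c$, which is absurd for large $N$ since the left-hand side is bounded by $|\varDelta_a|<\infty$. Hence $L=0$, that is, $|\varDelta_{a^n}|\to0$.

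I expect the only mildly delicate point to be the treatment of the boundary of $\varDelta_a$: the partition elements may be closed and share endpoints, so the identity $f_a(\varDelta_{a^n})=\varDelta_{a^{n-1}}$ and the change of variables should be read up to the Lebesgue-null set of endpoints, which is harmless since everything in the argument is a length. The conceptual content is that, although $f_a$ is only neutrally repelling at $x_a$, it strictly expands Lebesgue measure on every subinterval of positive length, so the cylinders $\varDelta_{a^n}$ cannot stabilise at positive length.
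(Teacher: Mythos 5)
Your proof is correct and follows essentially the same route as the paper's: both argue by contradiction that the nested intersection $I=\bigcap_n\varDelta_{a^n}$ would have positive length, and both derive the contradiction from the fact that $f_a$ strictly expands Lebesgue measure on any positive-length subset of $\varDelta_a$ because $|f'|\geq1$ with equality only at finitely many neutral points. The paper phrases this via $f(I)=I$ forcing $|f'|\equiv1$ on $I$, while you telescope the differences $|\varDelta_{a^{n-1}}|-|\varDelta_{a^n}|$; this is only a stylistic difference.
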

 \begin{proof}
  Put $I=\bigcap_{n=1}^\infty\varDelta_{a^n}$ and assume $|I|>0$ by contradiction. Since $f(I)=I$ and $|f'|\geq1$ everywhere on $I$, $|f'|\equiv1$ on $I$.
  Since there are only finitely many neutral points, we obtain a contradiction.
\end{proof}

\begin{proof}[Proof of Lemma \ref{uniform}]
We slightly modify the proof of \cite[Lemma 8.1.2]{MauUrb03}.
%Let $S'$ denote the set of $a\in S$ such that $aa$ is admissible, 
%$f$ is defined on the set $\bigcap_{n=1}^\infty\overline{\varDelta_{a^n}}$ and this set is a singleton
%consisting of a neutral fixed point of $f$.
%If $a\in S'$, then
%$\overline{\varDelta_a}$ contains a neutral fixed point.
Let $$g(n)=\max\{|\varDelta_{a^n}|\colon a\in \Omega, \text{ $aa$ is admissible}\}.$$
The finiteness of $\Omega$ and
Lemma \ref{shrink} give $\lim_{n\to\infty}g(n)=0$.
Let $\omega\in E^n$. 
Look at the longest block of the same index in $\Omega$ appearing in $\omega$.
If the length of this block exceeds $\sqrt{n}$ then we have 
$|\varDelta_{\omega}|\leq g(\lfloor\sqrt{n}\rfloor)$ by (M2).
Otherwise, we have 
%there exists %$s'\in(0,s)$ such that
$|\varDelta_{\omega}|\leq s^{(\lfloor\sqrt{n}\rfloor -1)/2}$ by (M3). Hence, $f$ has uniform decay of cylinders.
%we can find in $\omega$ at least %$\frac{n-\sqrt{n}}{\sqrt{n}}=\sqrt{n}-1%$
%$\lfloor(n-\sqrt{n})/\sqrt{n})\rfloor=\lfloor\sqrt{n}\rfloor-1$
% indices in $S\setminus S'$. 
% Note that, if $a\notin S'$ is a neutral index and $aa$ is admissible 
%then $\inf_{\varDelta_{aa}}|f'|>1.$
%From this observation and (M3), there exists $s'\in(0,s)$ such that
%$|\varDelta_{\omega}|\leq s'^{(\lfloor\sqrt{n}\rfloor -1)}$. Hence %$f$ has uniform decay of cylinders.
\end{proof}

\begin{remark}
 Condition (M3) is not necessary for the uniform decay of cylinders.
The Farey map $f\colon [0,1]\to[0,1]$ given by
$fx=\frac{x}{1-x}$ for $0\leq x< \frac{1}{2}$ and
$fx=\frac{1-x}{x}$ elsewhere
%$$fx=\begin{cases}\displaystyle{\frac{x}{1-x}}\ \text{ for }0\leq x< \frac{1}{2},\\
%\\
%\displaystyle{\frac{1-x}{x}}\ \text{ for }\frac{1}{2}\leq x\leq 1,\end{cases}$$
has a Markov partition $\{[0,\frac{1}{2}),[\frac{1}{2},1]\}$. 
It does not satisfy (M3) and has uniform decay of cylinders
by Lemma \ref{shrink}.
%for instance, from the estimates in \cite[Example %4]{Yur94}.
\end{remark}
\subsection{Saturation by expanding measures}\label{dist-cont}
%Towards applications of the Main Theorem,
One sufficient condition for
\eqref{dim} is the existence of
an induced Markov map which captures sets of full Hausdorff dimension, 
some power of which is uniformly expanding, finitely irreducible and
satisfies R\'enyi's condition. Then, the thermodynamic formalism for the induced map
and a finite approximation technique
provide us with expanding measures of dimension arbitrarily close to $\dim_HJ$. However, 
to verify R\'enyi's condition
for induced maps usually requires explicit formulas of individual maps (see e.g., \cite[Lemma 2.7 and Lemma 2.8]{BowSer79}, \cite[Lemma 3.4]{KesStr04}). 
Below we provide a convenient sufficient condition for the saturation \eqref{dim}, 
replacing R\'enyi's condition by conditions in terms of
the Markov partition of the original map.

\begin{prop}\label{verify}
Let $f\colon\varDelta\to M$ be a non-uniformly expanding Markov map with $C^2$ branches
 satisfying R\'enyi's condition and (M3). Assume that
%\textcolor{red}{0) renyi + M3  implies log $\tilde f$ Hoelder
%1) uniform exp + Hoelder implies (1.2)
%2) (1.2) for induced + iii implies (1.2) for original
%3) M3 + (i), (iv) implies $\tilde{f}^2$ is unif.  expanding (by Lemma 5.5)}
\begin{itemize}
 \item[(i)] if $a\in \Omega$, $aa$ is admissible and 
  $\varDelta_a$ contains no neutral fixed point, then $\varDelta_a$ is closed,
  \end{itemize}
  and moreover, there exist a subset $F$ of $E^*$ and a  function
$\tau\colon \{\varDelta_\omega\}_{\omega\in F}\to \mathbb N\setminus\{0\}$ 
such that the following holds:
  \begin{itemize}
\item[(ii)] the cylinders $\{\varDelta_\omega\}_{\omega\in F}$ 
have disjoint interiors,
and for each $\omega\in F$ and $0\leq n\leq\tau(\varDelta_\omega)-1$, 
$f^n\varDelta_\omega$
does not contain a neutral
fixed point;

\item[(iii)] 
the induced map $$\tilde f\colon x\in K\mapsto f^{\tau(\varDelta(x))}x\in M$$ where
 $K=\bigcup_{\omega\in F}\varDelta_\omega$ and
$\varDelta(x)$ is an element containing $x$,
is a finitely irreducible non-uniformly expanding Markov map
with a Markov partition $\{\varDelta_\omega\}_{\omega\in F}$;
%with $\tilde S \subset E^*$;}\textcolor{green}{$\tilde S=F$???}
\item[(iv)] we have $$\dim_HJ=\dim_H\left(\bigcap_{n=0}^\infty \tilde f^{-n}K\right).$$
     \end{itemize}
Then the following holds:
\begin{itemize}
\item[(a)] there exists $C>0$ such that for every $\omega\in F$  and all $x,y\in\varDelta_\omega$,
\begin{equation*}\label{controldist}\log\frac{(\tilde f)'x}
{(\tilde f)'y}\leq C|\tilde fx-\tilde fy|;\end{equation*}

    \item[(b)] 
there exists an ergodic expanding measure in $\mathcal M(f)$ with dimension arbitrarily close to $\dim_HJ$.
In particular, $f$ is saturated;

\item[(c)] $\tilde f$ has mild distortion and is saturated, and $(\tilde f)^2$ is uniformly expanding.

\end{itemize}
\end{prop}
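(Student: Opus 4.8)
The plan is to establish (a) first, then the assertion in (c) that $(\tilde f)^2$ is uniformly expanding, then (b), and finally the remaining parts of (c), since each step rests on the previous ones. Throughout write $\tau=\tau(\varDelta_\omega)$ for $\omega\in F$, so that $f^{j}\varDelta_\omega$ is the cylinder $\varDelta_{\omega_j\cdots\omega_{|\omega|-1}}$ for $0\le j\le\tau-1$; recall that $f$ has uniform decay of cylinders by (M3) and Lemma~\ref{uniform}, and hence so does $\tilde f$, since each $n$-cylinder of $\tilde f$ equals an $f$-cylinder of length at least $n$. The core of (a) is a uniform \emph{summable backward contraction}: there are $C_0>0$, $\lambda>1$ with
\[
\inf_{f^{j}\varDelta_\omega}|(f^{\tau-j})'|\ \ge\ C_0^{-1}\lambda^{\,\tau-j}\qquad (\omega\in F,\ 0\le j\le\tau-1).
\]
Granting this, the mean value theorem applied to the inverse branches of $f^{\tau-j}|_{f^{j}\varDelta_\omega}$ gives $\sum_{j=0}^{\tau-1}|f^{j}\varDelta_\omega|\le C_0(\lambda-1)^{-1}|\tilde f\varDelta_\omega|$, so by R\'enyi's condition, used exactly as in the proof of Lemma~\ref{mild} with $M_0=\sup_{a}\sup_{\varDelta_a}|f_a''|/|f_a'|^2<\infty$, one gets $\bigl|\log\tfrac{(\tilde f)'x}{(\tilde f)'y}\bigr|\le M_0\sum_{j=1}^{\tau}|f^{j}x-f^{j}y|\le B$ for all $x,y\in\varDelta_\omega$, with $B$ a uniform constant which (by the same argument) also bounds the distortion of each sub-branch $f^{\tau-j}|_{f^{j}\varDelta_\omega}$. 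A standard bootstrap then upgrades this to (a): bounded distortion of the sub-branches yields $|f^{j}x-f^{j}y|\le e^{B}|f^{j}\varDelta_\omega|\,|\tilde fx-\tilde fy|/|\tilde f\varDelta_\omega|$, and summing and re-inserting into the R\'enyi estimate produces $\log\tfrac{(\tilde f)'x}{(\tilde f)'y}\le C|\tilde fx-\tilde fy|$ for a suitable $C>0$.

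The summable backward contraction is the main obstacle, and is where (i), (ii), (M3) and the finiteness of $\Omega$ are used. Fix $\omega\in F$, $0\le j\le\tau-1$, and decompose $\omega_j\cdots\omega_{\tau-1}$ into maximal runs of equal symbols. By (ii) none of $\varDelta_{\omega_j},\dots,\varDelta_{\omega_{\tau-1}}$ contains a neutral fixed point, and by (M3) a pair of consecutive symbols $\omega_i\omega_{i+1}$ contributes $\inf_{\varDelta_{\omega_i\omega_{i+1}}}|(f^2)'|\ge s$ unless $\omega_i=\omega_{i+1}$ is a neutral index; so the only obstruction is a maximal run $a^{m}$ of a single neutral index $a$ with $aa$ admissible. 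For such $a$, hypothesis (i) forces $\varDelta_a$ to be closed, and then Lemma~\ref{shrink} shows that $\varDelta_{a^{n}}=f_a^{-(n-1)}(\varDelta_a)$ shrinks to a single point, necessarily a fixed point $z_a$ of $f_a$ with $|f_a'(z_a)|\ge1$, hence $|f_a'(z_a)|>1$ since $\varDelta_a$ contains no neutral fixed point. Picking $m_a$ with $\lambda_a:=\inf_{\varDelta_{a^{m_a}}}|f_a'|>1$ and setting $m_0=\max_{a\in\Omega}m_a$, $\lambda=\min\{s^{1/2},\min_{a\in\Omega}\lambda_a\}>1$ (finite since $\#\Omega<\infty$), one sees that in any run $a^{m}$ all but at most $m_0$ consecutive factors of $(f^{m})'$ are $\ge\lambda_a$. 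A routine combinatorial bookkeeping finishes it: pairing consecutive symbols, the ``bad'' pairs occur only inside short segments that are not already geometrically expanding and are flanked by ``good'' pairs, so the finitely many uncontrolled factors cost only a uniform multiplicative constant, which yields the displayed estimate.

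Next I would show directly that $(\tilde f)^2$ is uniformly expanding. Since $|f'|\ge1$ on $\varDelta$, a point $x$ in the domain of $(\tilde f)^2$ with $|((\tilde f)^2)'(x)|=1$ would satisfy $|f'(f^{i}x)|=1$ for $0\le i\le\tau_1+\tau_2-1$, where $\tau_1,\tau_2\ge1$ are the two successive return times; for each such $i$ then $\inf_{\varDelta_{\omega_i\omega_{i+1}}}|(f^2)'|\le|(f^2)'(f^{i}x)|=1<s$, so (M3) forces all the indices $\omega_i$ to equal one neutral index $a$, and since $\{|f'|=1\}=\{x_b:b\in\Omega\}$ is finite we get $f^{i}x=x_a$ for all $i$; thus $x_a$ is a neutral fixed point lying in the domain element $\varDelta_\omega$ of $x$ under $\tilde f$, contradicting (ii) with $n=0$. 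Hence $|((\tilde f)^2)'|>1$ everywhere. Then $\tilde f$ has mild distortion: with (a) in hand, the proof of Lemma~\ref{mild} applies with $C$ in place of $M_0$, using that $\tilde f$ has uniform decay of cylinders.

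It remains to prove saturation. Work with $(\tilde f)^2$, a finitely irreducible uniformly expanding Markov map with the R\'enyi-type distortion (a) and uniform decay of cylinders. By the thermodynamic formalism for finitely irreducible countable Markov shifts \cite{MauUrb01,MauUrb03} and the approximation of the Hausdorff dimension of the limit set by finite subsystems, $\dim_H\bigl(\bigcap_n\tilde f^{-n}K\bigr)$ equals $\delta_0(\tilde f)$ and is attained in the limit by ergodic equilibrium (Bowen) measures supported on finitely many cylinders; this gives the saturation of $\tilde f$, completing (c). For (b), set $\tilde J=\bigcap_n\tilde f^{-n}K$, so $\dim_HJ=\dim_H\tilde J$ by (iv). Given $\epsilon>0$, choose $\tilde\mu$ as above with $\dim_{\tilde f}(\tilde\mu)>\dim_H\tilde J-\epsilon$ and $\operatorname{supp}\tilde\mu$ a finite union of cylinders, so $\tau$ is bounded there and $\int\tau\,d\tilde\mu<\infty$; spreading $\tilde\mu$ to the $f$-invariant measure $\mu$ proportional to $\sum_{\omega\in F}\sum_{i=0}^{\tau(\varDelta_\omega)-1}f^{i}_*(\tilde\mu|_{\varDelta_\omega})$ gives an ergodic expanding measure in $\mathcal M(f)$, and by the Abramov and Kac formulas \cite{PesSen08,Zwe05}, $h(\mu)=h_{\tilde f}(\tilde\mu)/\int\tau\,d\tilde\mu$ and $\chi(\mu)=\chi_{\tilde f}(\tilde\mu)/\int\tau\,d\tilde\mu$, whence $\dim(\mu)=\dim_{\tilde f}(\tilde\mu)>\dim_HJ-\epsilon$. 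As $\epsilon>0$ was arbitrary, this is the first claim of (b); since $\dim(\nu)\le\dim_HJ$ for all $\nu\in\mathcal M(f)$, we get $\delta_0=\dim_HJ$, i.e.\ $f$ is saturated.
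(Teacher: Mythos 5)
Your treatment of (b) and (c) (inducing, H\"older/bounded distortion of the induced potential, approximation of the pressure by finite subsystems, and spreading the equilibrium states back via Abramov--Kac) follows the paper's route and is essentially correct. The proof of (a), however, has a genuine gap caused by a misreading of hypothesis (ii). Condition (ii) asserts that the \emph{image cylinders} $f^n\varDelta_\omega$ contain no neutral fixed point; it does not assert that the partition elements $\varDelta_{\omega_n}$ visited along the inducing orbit contain none. In the motivating examples these are very different: for the first-return map of the R\'enyi map to $(1/2,1)$, the words of $F$ have the form $i\,1^m j$, the images $f^n\varDelta_{i1^mj}=\varDelta_{1^{m-n+1}j}$ avoid the neutral fixed point $0$, but the element $\varDelta_1$ traversed $m$ times certainly contains it. Your reduction therefore covers only runs $a^m$ with $\varDelta_a$ closed and carrying an expanding fixed point (handled via (i), Lemma \ref{shrink} and the mechanism of Lemma \ref{g}), and omits the central case of a run of a neutral index whose domain \emph{does} contain the neutral fixed point.

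For such runs the asserted uniform exponential backward contraction $\inf_{f^{j}\varDelta_\omega}|(f^{\tau-j})'|\ge C_0^{-1}\lambda^{\tau-j}$ is simply false: near a neutral fixed point the derivative is $1+o(1)$, and the accumulated expansion over an excursion of length $m$ is only polynomial in $m$ (for the R\'enyi map, $(f_1^m)'\asymp m^2$ on $\varDelta_{1^mj}$). Hence neither input to your bootstrap --- the summability $\sum_{j}|f^{j}\varDelta_\omega|\le C|\tilde f\varDelta_\omega|$ nor the uniformly bounded distortion of the sub-branches $f^{\tau-j}|_{f^{j}\varDelta_\omega}$ --- can be derived this way. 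Both statements are true, but the correct mechanism is the Thaler-type estimate of the paper's Lemma \ref{scope'}, which exploits $\sum_n|I_n|\le 1$ for the backward images $I_n$ of a fundamental interval under the neutral branch, together with the a priori distortion bound $\sup_{x,y\in I_n}\log\bigl((g^{n-k})'(g^kx)/(g^{n-k})'(g^ky)\bigr)\le C_0$; this is what replaces geometric decay. The paper's block decomposition into types (I)--(III) is exactly designed to separate this case from the two you did treat, and without an argument of this kind your proof of (a) does not go through. (A smaller point: your pointwise argument that $|((\tilde f)^2)'|>1$ concludes that $x$ is a neutral \emph{fixed} point from the fact that the orbit stays in the finite set $\{x_b\colon b\in\Omega\}$; a neutral period-two orbit inside a single $\varDelta_a$ is not excluded by (ii) alone and needs the extra step carried out in Lemma \ref{g}.)
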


\noindent{\it Proof.}
%To prove (c), we decompose
%each branch $\tilde f|_{\varDelta_\omega}$ into finitely many
%blocks of iterates of $f$.
In order to treat neutral indices we need a couple of lemmas.
The next one controls distortion of a branch whose domain contains a neutral fixed point.

\begin{lemma}\label{scope'}
%Let $0<c<1$ and 
%Let $g\colon[0,c)\to [0,1]$ be a $C^{2}$ map satisfying
%$g(0)=0$, $g'(0)=1$ and $g'>1$ on $(0,c)$.
Let $g\colon[0,1]\to \mathbb R$ be a $C^{2}$ map satisfying
$g(0)=0$, $g'(0)=1$ and $g'>1$ on $(0,1]$.
There exists a constant $C>0$ such that for any $n\geq1$
and all $x$, $y\in (g^{-n}(1),g^{-n+1}(1)]$ we have
$$\log\frac{(g^{n})'(x)}{(g^n)'(y)}\leq C|g^n(x)-g^n(y)|.$$
\end{lemma}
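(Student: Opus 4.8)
The plan is to reduce the estimate to a comparison with the model parabolic map near the origin. First I would introduce the standard coordinate change that linearizes the dynamics near the neutral fixed point. Since $g$ is $C^2$ with $g(0)=0$, $g'(0)=1$, the second-order expansion gives $g(x)=x+cx^2+o(x^2)$ for some $c>0$ (one checks $c\geq 0$ from $g'>1$ on $(0,1]$, and $c>0$ would be the generic case; if $c=0$ a higher-order term dominates but the argument is analogous). The preimages $u_n:=g^{-n}(1)$ form a decreasing sequence converging to $0$, and the classical asymptotics for parabolic maps yield $u_n\asymp 1/(cn)$ and, more importantly, $(g^n)'(u_n)=\prod_{j=0}^{n-1}g'(g^{-n+j}(1))\asymp n^{2}$ (up to multiplicative constants), while $|g^n(x)-g^n(y)|$ for $x,y$ in the fundamental domain $(u_n,u_{n-1}]$ ranges over a set of size comparable to $1$. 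So the inequality we must prove reads, roughly, $\log\big((g^n)'(x)/(g^n)'(y)\big)\leq C$ uniformly — i.e. bounded distortion on each fundamental interval — with the factor $|g^n(x)-g^n(y)|$ on the right being harmless since it is bounded below only when $x,y$ are far apart; when they are close we need the genuine Lipschitz-type bound.

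The key steps, in order, are: (1) Write $\log\frac{(g^n)'(x)}{(g^n)'(y)}=\sum_{j=0}^{n-1}\log\frac{g'(g^{j}x)}{g'(g^{j}y)}$ and bound each term by $\sup|(\log g')'|\cdot|g^{j}x-g^{j}y|\leq (\sup|g''|/\inf g')\,|g^{j}x-g^{j}y|$. (2) Establish the telescoping/Koebe-type control $\sum_{j=0}^{n-1}|g^{j}x-g^{j}y|\leq C'|g^{n}x-g^{n}y|$: this is where the parabolic asymptotics enter. Using $u_n\asymp 1/(cn)$ one shows that for $x,y\in(u_n,u_{n-1}]$ the images $g^{j}x,g^{j}y$ lie in $(u_{n-j},u_{n-j-1}]$, an interval of length $\asymp 1/(c(n-j)^2)$, and that $|g^{j}x-g^{j}y|\asymp \frac{(n-j)^{?}}{n^{?}}|g^n x - g^n y|$ — the precise exponent comes from differentiating the conjugacy to the model map $z\mapsto z/(1-z)$ or $z\mapsto z+z^2$. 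The resulting series is dominated by a convergent one times $|g^n x-g^n y|$. (3) Combine (1) and (2) to get the claimed inequality with $C=(\sup|g''|/\inf g')\cdot C'$.

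The main obstacle is step (2): making the distortion-of-orbit estimate $\sum_{j}|g^{j}x-g^{j}y|\lesssim |g^{n}x-g^{n}y|$ rigorous and uniform in $n$. The clean way is to pass through an explicit conjugacy. One shows there is a $C^1$ (indeed $C^{1+\text{H\"older}}$) homeomorphism $h$ near $0$ conjugating $g$ to the exactly-solvable model $\psi(z)=z/(1-z)$ (whose inverse iterates are $\psi^{-n}(z)=z/(1+nz)$, so all quantities are computable in closed form), with $h'$ bounded above and below on $[0,1]$. Transporting the desired inequality through $h$ costs only the distortion of $h$, which is bounded, and for $\psi$ the inequality is a direct computation: $(\psi^n)'(x)=1/(1-nx+\ldots)^2$ type expressions give $\log\frac{(\psi^n)'(x)}{(\psi^n)'(y)}=2\log\frac{1+n\psi^n(y)}{1+n\psi^n(x)}$-style identities that are manifestly Lipschitz in $\psi^n(x)-\psi^n(y)$ with a constant independent of $n$. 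Alternatively, one avoids conjugacy entirely and argues by a direct induction on $n$ controlling the quantity $Q_n(x,y)=|g^n x-g^n y|^{-1}\sum_{j\le n}|g^j x - g^j y|$, using the mean value theorem and the lower bound $g'\geq 1$ together with the quantitative repulsion $g'(g^{-n+j}(1))\geq 1+c'/(n-j)$; I expect the conjugacy route to be cleanest to write down and least error-prone.
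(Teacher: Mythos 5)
Your skeleton (telescope $\log\frac{(g^n)'x}{(g^n)'y}$, bound each term by $\sup|g''|\cdot|g^jx-g^jy|$ using $g'\ge 1$, then show $\sum_{j}|g^jx-g^jy|\lesssim|g^nx-g^ny|$) is exactly right, but the way you propose to prove the key summation estimate has a genuine gap. Both of your routes lean on the asymptotics $u_n\asymp 1/(cn)$, $(g^n)'(u_n)\asymp n^2$, and a conjugacy with bi-bounded derivative to the model $z\mapsto z/(1-z)$. These all require a nondegenerate quadratic tangency $g''(0)\neq 0$, which is not among the hypotheses: the lemma only assumes $g\in C^2$, $g'(0)=1$, $g'>1$ on $(0,1]$, and e.g. $g(x)=x+x^4$ satisfies these with $u_n\asymp n^{-1/3}$ and no $C^1$ conjugacy (with $h'$ bounded above and below) to the quadratic model. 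Your hedge that ``if $c=0$ a higher-order term dominates'' is not sufficient either, since a $C^2$ map with $g''(0)=0$ need not have any definite order of tangency. Even in the nondegenerate case, the existence and $C^1$-regularity of the conjugacy is itself a nontrivial theorem (Szekeres/Thaler-type), so it cannot be invoked as a free reduction; and the ``direct induction'' alternative is left entirely unexecuted.

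The point you are missing is that no asymptotics are needed at all. The fundamental intervals $I_m=(g^{-m}(1),g^{-m+1}(1)]$ are pairwise disjoint subintervals of $[0,1]$, so $\sum_m|I_m|\le 1$. Since $g^jx,g^jy\in I_{n-j}$ for $x,y\in I_n$, each term satisfies $\log\frac{g'(g^jx)}{g'(g^jy)}\le C_0|I_{n-j}|$ with $C_0=\sup|g''|$, and summing over a tail gives the uniform bounded-distortion bound $\log\frac{(g^{n-k})'(\xi)}{(g^{n-k})'(\eta)}\le C_0$ for all $\xi,\eta\in I_{n-k}$, with constant independent of $n$ and $k$. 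The mean value theorem applied to $g^{n-k}$ on $I_{n-k}$ (which maps onto $g(I_1)$, a fixed interval) then converts this into $|g^kx-g^ky|\le e^{2C_0}|g(I_1)|^{-1}\,|I_{n-k}|\,|g^nx-g^ny|$, and summing over $k$ uses $\sum_k|I_{n-k}|\le 1$ once more to give the lemma with $C=C_0e^{2C_0}/|g(I_1)|$. This soft argument is both shorter and valid in the degenerate case your approach cannot handle.
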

\begin{proof}
%We perform a telescoping argument.
Put $C_0=\sup_{[0,1]}|g''|$
and $I_n=(g^{-n}(1),g^{-n+1}(1)]$. For $x,y\in I_n$ and
$0\leq k\leq n-1$ we have 
%Using the monotonicity of $Df$ on $[0,c]$ and the formula $Df(x)=1+(1+\alpha)x^\alpha$ we have
%$$\log\frac{Df(f^kx)}{Df(f^ky)}\leq\log\frac{Df(a_{n-k-1})}{Df(a_{n-k})}\leq\frac{Df(a_{n-k-1})-Df(a_{n-k})}{Df(a_{n-k})}
%<(1+\alpha)(a_{n-k-1}^\alpha-a_{n-k}^\alpha).$$
\begin{align*}
\log\frac{g'(g^k(x))}{g'(g^k(y))}
\leq|g'(g^k(x))-g'(g^k(y))|\leq C_0|I_{n-k}|.\end{align*}
Therefore
% \begin{equation*}\label{lem}\sup_{x,y\in I_n}\log\frac{Df^{n}(x)}{Df^n(y)}\leq C.\end{equation*}
   \begin{equation*}\label{lem}\sup_{x,y\in I_n}\log\frac{(g^{n-k})'(g^k(x))}{(g^{n-k})'(g^k(y))}\leq C_0,\end{equation*}
and
%\begin{equation*}
%\frac{|f^kx-f^ky|}{|I_{n-k}|}\leq e^C\frac{|f^nx-f^ny|}{|I_1|},\end{equation*}
%and 
\begin{equation*}
\begin{split}
\log\frac{g'(g^k(x))}{g'(g^k(y))}
&\leq C_0\frac{|g^k(x)-g^k(y)|}{|I_{n-k}|}|I_{n-k}|\leq \frac{C_0e^{2C_0}}{|g(I_{1})|}|g^n(x)-g^n(y)||I_{n-k}|.\end{split}
\end{equation*}
Put $C=C_0e^{2C_0}/|gI_1|$. 
Summing this over all $0\leq k\leq n-1$ yields
the desired inequality. 
\end{proof}

The next lemma will be used to treat the occurrence of the same neutral index consecutively
 which does not correspond to a neutral fixed point.
\begin{lemma}\label{g}
There exists $\rho>1$  such that
if $a\in \Omega$ is such that $aa$ is admissible and
 $\varDelta_{a}$ does not contain a neutral fixed point, then
$$\inf_{\varDelta_{aa}}|(f^2)'|\geq\rho.$$
Moreover,  $(\tilde f)^2$ is uniformly expanding. 
\end{lemma}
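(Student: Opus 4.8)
The plan is to prove the two assertions in turn; the first is a compactness argument and the second combines it with (M3) and the distortion estimate of Lemma~\ref{scope'}. For the first, fix a neutral index $a$ with $aa$ admissible and such that $\varDelta_a$ contains no neutral fixed point. By hypothesis (i) the set $\varDelta_a$ is closed, hence compact, and on it $|f_a'|$ is continuous, is everywhere $\ge 1$, and equals $1$ only at $x_a$. Then $|(f^2)'|=|f_a'|\cdot(|f_a'|\circ f_a)$ is continuous and $\ge 1$ on the compact set $\varDelta_{aa}=\varDelta_a\cap f_a^{-1}\varDelta_a$. Were $\inf_{\varDelta_{aa}}|(f^2)'|$ equal to $1$, it would be attained at a point $x_*\in\varDelta_{aa}$ with $|f_a'(x_*)|=|f_a'(f_ax_*)|=1$, forcing $x_*=x_a=f_ax_a$; but then $x_a\in\varDelta_a$ would be a neutral fixed point, a contradiction. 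Hence $\inf_{\varDelta_{aa}}|(f^2)'|>1$ for each such $a$, and since $\Omega$ is finite we may set $\rho$ to be the minimum of $s$ (the constant from (M3)) and these finitely many infima; then $\rho>1$. Call an admissible $2$-word $a_1a_2$ \emph{bad} if $a_1=a_2\in\Omega$ and $\varDelta_{a_1}$ contains a neutral fixed point; by (M3) and the above, $\inf_{\varDelta_{a_1a_2}}|(f^2)'|\ge\rho$ for every admissible $2$-word that is not bad.

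For the second assertion, let $x$ be a point in the domain of $(\tilde f)^2$, write $(\tilde f)^2x=f^Nx$ with $N=\tau(\varDelta(x))+\tau(\varDelta(\tilde fx))\ge 2$, and put $x_j=f^jx$. If the $2$-word recording the partition elements of some consecutive pair $(x_j,x_{j+1})$, $0\le j\le N-2$, is not bad, then $|((\tilde f)^2)'x|=\prod_{i=0}^{N-1}|f'(x_i)|\ge|f'(x_j)|\,|f'(x_{j+1})|\ge\rho>1$. Otherwise all these $2$-words are bad; since consecutive itineraries share a symbol and an element without a neutral fixed point cannot carry a bad pair, all $x_j$ lie in one element $\varDelta_a$ with $a\in\Omega$ and $\varDelta_a$ containing a neutral fixed point $x_a$. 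By (ii) the cylinder $\varDelta(x)$ does not contain $x_a$, so it lies on one side of $x_a$ inside $\varDelta_a$; let $M$ be the largest integer with $x\in\varDelta_{a^M}$, so $\varDelta(x)\subset\varDelta_{a^M}\setminus\varDelta_{a^{M+1}}$ and $M\ge N>\tau(\varDelta(x))$.

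Conjugating $f_a$ near $x_a$ to a map $g$ as in Lemma~\ref{scope'} (replacing $f_a$ by $f_a^2$ if $f_a$ reverses orientation), the cylinder $\varDelta(x)$ is carried into a single interval $I_M=(g^{-M}(1),g^{-M+1}(1)]$ and $\tilde f|_{\varDelta(x)}=f^{\tau(\varDelta(x))}|_{\varDelta(x)}$ into $g^{\tau(\varDelta(x))}|_{I_M}$, which by the argument of Lemma~\ref{scope'} has distortion bounded by a constant $C=C(a)$. The mean value theorem then gives $|(\tilde f)'x|\ge e^{-C}|\tilde f\varDelta(x)|/|\varDelta(x)|$. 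Here $|\varDelta(x)|\le|\varDelta_{a^M}|$, and since $\tilde f\varDelta(x)$ is a full branch image of the finitely irreducible induced map $\tilde f$ we have $|\tilde f\varDelta(x)|\ge\kappa:=\inf_{\omega\in F}|\tilde f\varDelta_\omega|>0$ by Lemma~\ref{inf-image} applied to $\tilde f$. By Lemma~\ref{shrink}, $|\varDelta_{a^n}|\to 0$, so we may fix $M_0=M_0(a)$ with $|\varDelta_{a^{M_0}}|<e^{-C}\kappa/2$; then $M>M_0$ forces $|((\tilde f)^2)'x|\ge|(\tilde f)'x|\ge e^{-C}\kappa/|\varDelta_{a^M}|>2$, while $M\le M_0$ forces $x\notin\varDelta_{a^{M_0+1}}$, hence $|x-x_a|\ge\delta_a:=\operatorname{dist}(x_a,\varDelta_a\setminus\varDelta_{a^{M_0+1}})>0$ and $|((\tilde f)^2)'x|\ge|f'(x)|\ge\inf\{|f_a'(y)|\colon y\in\varDelta_a,\ |y-x_a|\ge\delta_a\}=:c_a>1$. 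Taking the least of $\rho$, $2$ and the finitely many $c_a$, and passing to branch-boundary points by continuity, we conclude $\inf|((\tilde f)^2)'|>1$, so $(\tilde f)^2$ is uniformly expanding.

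The delicate case is the last one, where a $(\tilde f)^2$-orbit segment never leaves a single $\varDelta_a$ about a parabolic point $x_a$ and every factor $|f'(x_j)|$ may be arbitrarily close to $1$; the resolution is that a segment hugging $x_a$ must lie in an exponentially small cylinder whose $\tilde f$-image has definite size — which is where Lemma~\ref{scope'}, the shrinking of $\varDelta_{a^n}$, and the uniform lower bound $\kappa$ on branch images of the finitely irreducible $\tilde f$ come in — whereas a segment at bounded distance from $x_a$ already carries one genuinely expanding factor. Passing to $(\tilde f)^2$ rather than $\tilde f$ guarantees that every branch block has length at least two, so that the bad-pair analysis is never vacuous.
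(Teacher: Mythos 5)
Your proof of the first assertion is essentially the paper's argument: compactness of $\varDelta_{aa}$ (available because hypothesis (i) of Proposition \ref{verify} makes $\varDelta_a$ closed in this case), the fact that $x_a$ is the only point of $\varDelta_a$ with $|f_a'|=1$, and finiteness of $\Omega$ to get a uniform $\rho$. For the second assertion the paper writes only that uniform expansion of $(\tilde f)^2$ ``follows from (M3)''; that one line is exactly your Case A: writing $(\tilde f)^2x=f^Nx$ with $N\ge 2$, any consecutive pair in the length-$N$ itinerary that is not of the form $aa$ with $\varDelta_a$ containing a neutral fixed point contributes a factor $\ge\min\{s,\rho\}$ while all other factors are $\ge1$. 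Your Case B --- the whole itinerary is a single parabolic symbol --- is not obviously excluded by the abstract hypotheses (i)--(iv) (it is excluded in the Bowen--Series application, where every word of $F$ ends in a fresh generator, which is presumably why the paper does not discuss it), so handling it by the escape-time/distortion argument built on Lemma \ref{scope'}, Lemma \ref{shrink} and Lemma \ref{inf-image} is a genuine and worthwhile addition rather than a detour.

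Two steps in Case B need to be shored up. First, the containment $\varDelta(x)\subset\varDelta_{a^M}\setminus\varDelta_{a^{M+1}}$ is asserted but not justified: if the word $\omega\in F$ with $\varDelta(x)=\varDelta_\omega$ had length $\le M$, then $\omega=a^{|\omega|}$ and $\varDelta_\omega$ would strictly contain $\varDelta_{a^M}$. The justification is that $\varDelta_{a^{|\omega|}}$ contains $x_a$, contradicting (ii) with $n=0$; hence $|\omega|\ge M+1$, $\omega$ begins with $a^Mb$ for some $b\ne a$ by maximality of $M$, and the containment (up to boundary points) follows from disjointness of interiors of cylinders of equal length. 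Second, the constant $c_a=\inf\{|f_a'(y)|\colon y\in\varDelta_a,\ |y-x_a|\ge\delta_a\}$ is claimed to exceed $1$ by a compactness argument, but hypothesis (i) makes $\varDelta_a$ closed only when it contains \emph{no} neutral fixed point; here $\varDelta_a$ may be half-open (e.g.\ $\varDelta_1=[0,\tfrac12)$ for the R\'enyi map), and nothing in the axioms forbids $|f_a'|\to1$ at the missing endpoint, which would force $c_a=1$ and break the bound. The repair is to use that in this case $x\in\varDelta_{a^2}$ (since $N\ge2$) and that $\overline{\varDelta_{a^2}}$ is a compact subset of $\varDelta_a$ (every point of $\varDelta_a\setminus\{x_a\}$ is moved strictly away from $x_a$ by $f_a$), so the infimum of $|f_a'|$ over $\{y\in\overline{\varDelta_{a^2}}\colon|y-x_a|\ge\delta_a\}$ is attained and exceeds $1$. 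With these two repairs your argument is complete and, for the statement at the level of generality of Proposition \ref{verify}, more careful than the paper's.
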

\begin{proof}
Since
 $\varDelta_{a}$ is closed by the assumption (i) in Proposition \ref{verify}
and $|f'|\geq1$,
there is a fixed point in $\varDelta_a$.
By the assumption of Lemma \ref{g}  this fixed point is not neutral. Further, since $|f'|\geq1$, there is no other 
fixed point in
$\varDelta_{a}$. It follows that  $|(f^{2})'|$ is bounded away from $1$ on $\varDelta_{aa}$ for otherwise, since $\varDelta_a$ is closed, there would exist a neutral periodic point in $\varDelta_a$ of period two.
%Therefore, there exists $L_a\geq2$ such that $|f'|$ is bounded away
%from $1$ on $\varDelta_{a^{L_a}}$.
%Set $L=2\max_{a} L_a-1$, where the maximum is taken over all neutral indices
%as in Lemma \ref{g}. Then
The uniform expansion of $(\tilde f)^2$ follows from (M3).
\end{proof}

To prove Proposition \ref{verify}(a), let $\omega\in E^*$.
We decompose $\omega$
into a concatenation of admissible words
$\omega=\omega(0)\omega(1)\cdots\omega(k)$ 
with the following properties:
\begin{itemize}
    \item
    each 
$\omega(i)$ $(0\leq i\leq k)$ is one of the following types: 
(I) a string of expanding indices;
 (II)
a single neutral index $a$
for which $\varDelta_a$ contains a neutral fixed point;
(III) a single neutral index $a$
for which $\varDelta_a$ does not contain a neutral fixed point;
\item %the decomposition saturated:
if $k'\leq k$ and
$\omega=\omega'(1)\cdots
\omega'(k')$
is another decomposition with the first property, 
then $k=k'$ and $\omega(i)=\omega'(i)$ for every $0\leq i\leq k$.
\end{itemize}
% Since $f$ satisfies R\'enyi's condition, $D_1(\log|f'|)<\infty$ holds.
%From this and Lemma \ref{inf-image}, 
% there exists a constant $\rho>1$ such that
%for all expanding index $a\in S$ but finitely many ones,
%\inf_{\varDelta_a}|f_a'|\geq\rho.$
%Put $S'=\{a\in S\colon \inf_{\varDelta_a}|f_a'|<\rho\}$.
%Put
%$$A=\{i\in[0,k]\cap\mathbb N\colon\text{$\eta_i$ consists only of expanding indices}\}$$
%$$A'=\{i\in A\colon\text{ $\eta_i\in S'$}\}$$
%and
% $$B=[0,k]\cap\mathbb N-A.$$
   %\textcolor{blue}{If $a\in S$ is an expanding index and $\varDelta_a$
 % is not closed, then we may have $\inf_{\varDelta_a}|f'|=1$.}
We claim that there exists a constant $C>0$ which is independent of $\omega$ such that the following holds for every $0\leq i\leq k$ and all $x,y\in\varDelta_{\omega(i)}$:
\begin{equation}\label{control1}
    \log\frac{(f^{|\omega(i)|})'x}{(f^{|\omega(i)|})'y}\leq C|f^{|\omega(i)|}x-f^{|\omega(i)|}y|.
\end{equation}
Indeed, an inequality of the form \eqref{control1} in the case (I) applies to $\omega(i)$ follows from R\'enyi's condition and (M3).
For those $i$ such that (II) applies to $\omega(i)$,
we use Lemma \ref{scope'}. For those $i$
such that (III) applies to $\omega(i)$, we use
R\'enyi's condition, Lemma \ref{g}
and the finiteness of the number of neutral indices.

Assume $k\geq1$. 
Condition (M3) implies
\begin{equation}\label{expansion1}\inf_{\varDelta_{\omega(i)
\omega(i+1)}}|(f^{|\omega(i)
\omega(i+1)|})'|\geq s\quad\text{
$0\leq \forall i\leq k-1$}.\end{equation}
Now, let
$x,y\in \varDelta_\omega$. 
%Note that
%$\tilde f x=f_{\eta_k}\circ\cdots\circ f_{\eta_1}x$. 
The chain rule gives
\begin{equation*}\log\frac{(\tilde f)'x}
{(\tilde f)'y}=
\log\frac{(f^{|\omega(0)|})'x}{(f^{|\omega(0)|})'y}+\sum_{i=1}^{k}\log\frac{(f^{|\omega(i)|})'f^{|\omega(0)\cdots\omega(i-1)|}x}
{(f^{|\omega(i)|})'f^
{|\omega(0)\cdots\omega(i-1)|}y}.\end{equation*}
Applying \eqref{control1} to the right-hand side,
we have
\begin{equation}\label{cont1}
\log\frac{(\tilde f)'x}
{(\tilde f)'y}\leq C\sum_{i=1}^{k+1}|f^{|\omega(0)\cdots\omega(i-1)|}x-f^
{|\omega(0)\cdots\omega(i-1)|}y|.\end{equation}
We apply the mean value theorem to each term in the series and use the uniform expansion
 $\inf_{\varDelta_{\omega(i)\cdots\omega(k)}}|(f^{|\omega(i)\cdots\omega(k)|})'|\geq s^{k-i+1}$
from  \eqref{expansion1} to deduce that
\begin{align*}
\log\frac{(\tilde f)'x}
{(\tilde f)'y}&\leq C\sum_{i=1}^{k+1}\frac{|\tilde fx-\tilde fy|}{\inf_{\varDelta_{
\omega(i)\cdots\omega(k)}}|(f^{|\omega(i)\cdots\omega(k)|})'|}\leq C\left(\sum_{i=1}^{k}s^{-k+i}+1\right)|\tilde fx-\tilde fy|\\
&\leq C\frac{2s-1}{s-1}|\tilde fx-\tilde fy|.
\end{align*}
%Put $C=C_0\frac{2s-1}{s-1}$. 
The case $k=1$ is covered by \eqref{control1}.
This completes the proof of Proposition \ref{verify}(a).

By Lemma \ref{uniform}, $f$ has uniform decay of cylinders.
Let $\tilde X\subset F^{\mathbb N}$ $(\tilde X\subset X)$ denote the topological Markov shift
determined by $\tilde f$ with alphabet $F$,
%%$$\tilde f|_{\bigcap_{n=0}^\infty \tilde f^{-n}K}$, 
and let $\tilde\pi\colon \tilde X\to \bigcap_{n=0}^\infty \tilde f^{-n}K\subset K$ denote the coding map defined
as in Section \ref{CM}, namely
$\tilde\pi( \omega)\in\bigcap_{n=0}^\infty \overline{\tilde f^{-n}\varDelta_{ \omega_n}}$
for $\omega=(\tilde \omega_n)_{n=0}^\infty\in\tilde X$.
Define the  induced potential 
$\tilde\psi\colon\tilde X\to\mathbb R$ by
$\tilde\psi=-\log|D\tilde f\circ\tilde\pi|.$
From Proposition \ref{verify}(a),
$\tilde\psi$ is H\"older continuous. For each $\beta\in\mathbb R$,
denote by $P(\beta\tilde\psi)$ the pressure of 
the potential $\beta\tilde\psi$
(see \eqref{press} or \cite[Section 2.1]{MauUrb03} for the definition). The critical exponent is given by
$$\delta=\inf\{\beta\geq0\colon P(\beta\tilde\psi)\leq0\}.$$

We show $\dim_HJ\le \delta$. Since $(\tilde f)^2$ is uniformly expanding  by Lemma \ref{g}, it follows that $\beta \mapsto P(\beta\tilde\psi)$ is strictly decreasing. Hence, $P((\delta + \epsilon ) \tilde\psi)<0$ for any $\epsilon>0$. By a standard covering argument we verify that the 
Hausdorff $(\delta+\epsilon)$-measure of $\tilde\pi(\tilde X)$ is finite. Namely, for sufficiently large $n \in \mathbb N$, we cover $\tilde\pi(\tilde X)$ by cylinders $ \omega\in F^n$. Since $\tilde \psi$ is H\"older continuous by 
Proposition \ref{verify}(a) and $(\tilde f)^2$ is uniformly expanding, $\tilde \psi$ has the bounded distortion property (cf.  \cite[Lemma 2.3.1]{MauUrb03}). In particular,  $$\sup_{ \omega \in F^n}\frac{|\tilde\pi([\omega])|^{\delta+\epsilon}}{ \exp\left((\delta+\epsilon )\sup \sum_{i=0}^{n-1}\tilde\psi\circ\tilde\sigma^i|_{[\omega]}\right)}<\infty,$$  %\textcolor{blue}{$S_{|\omega|}\tilde\psi|_{[\omega]}\to
%S_{n}\tilde\psi|_{[\omega]}$
%($S_n$ wrt the shift $\tilde X$)}
where $\tilde\sigma\colon\tilde X\to\tilde X$ is the left shift.
Combining this with $P((\delta + \epsilon ) \tilde\psi)<0$, it follows that  the Hausdorff 
$(\delta+\epsilon)$-measure of $\tilde\pi(\tilde X)$ is finite.
Since $\epsilon>0$ is arbitrary we obtain
$$\dim_HJ=
\dim_H\left(\bigcap_{n=0}^\infty \tilde f^{-n}K\right)\leq\dim_H\tilde\pi(\tilde X)\leq\delta.$$

Next, we verify that $\delta_0\geq\delta.$ To prove this we first make use of the approximation property
of the pressure function $\beta\mapsto P(\beta\tilde\psi)$ by compact invariant subsystems.
Let $\tilde X_n$ denote the finite subshift of $\tilde X$ given by all sequences containing exclusively
the first $n$ symbols of the alphabet of $\tilde X$.
Denote by $P_n(\beta\tilde\psi)$ the corresponding pressure for the potential $\beta\tilde\psi|_{\tilde X_n}$. 
By \cite[Theorem 2.1.6]{MauUrb03}, $P(\beta \tilde\psi)=\lim_{n\to\infty} P_n(\beta\tilde\psi)$ holds. Denote by $\delta_n$ the unique solution
of the equation $P_n(\beta \tilde\psi)=0$. It is then straightforward to verify that 
$\delta=\lim_{n\to\infty}\delta_n$ (see \cite[Theorem 1.7]{JaeKes10}).
Let $\tilde\mu_n$ denote the equilibrium state for the potential $\delta_n\tilde\psi|_{\tilde X_n}$.
Since $\tau$ is integrable against $\tilde\mu_n$,
 the measure $$\mu_n=\left(\sum_{\omega\in F}\tau(\varDelta_\omega)\tilde\mu_n(\varDelta_\omega)\right)^{-1}\sum_{\omega\in F} \sum_{j=0}^{\tau(\varDelta_\omega)-1}f^j_*\tilde\pi_*\tilde\mu_{n}|_{\varDelta_\omega}$$ is in $\mathcal M(f)$, ergodic, expanding and satisfies 
%$|\dim(\mu_n)-\delta_n|<1/n$ by 
$\dim(\mu_n)=\delta_n$
by Abramov-Kac's formula.
This proves $\delta_0\geq\delta$,
and so Proposition \ref{verify}(b) holds.

The bounded distortion property of $\tilde\psi$ implies
 that $\tilde f$ has mild distortion.
The above approximation by finite subsystems implies \eqref{dim} for $\tilde f$.
This completes the proof of Proposition \ref{verify}(c).
\qed

\begin{remark}
The assumption on the smoothness of $f$ in Proposition \ref{verify} can be relaxed,
so as to accommodate the Manneville-Pomeau type neutral fixed points.
It is enough to assume that a branch containing a neutral fixed point $p$ in its domain is $C^{1+\theta}$, 
$\theta\in(0,1)$, concave near $p$, and that
there exist $A\neq0$, $\beta\in(0,\frac{\theta}{1-\theta})$ such that  
$ f'x= 1+A (x-p)^{\beta}+o((x-p)^{\beta})\quad(x\to p).$
A control of distortion as in Lemma \ref{scope'} holds with
exponent $\theta$,
% $|f^nx-f^ny|$ there replaced by $|f^nx-f^ny|^\theta.$
which one can prove using %Thaler's asymptotics 
\cite[Lemma 2, Corollary]{Tha83}. % on $p_n$ as $n\to\infty$.
For details, see \cite{Nak00,Urb96} for example.
\end{remark}

  \section{Mixed Birkhoff spectra associated with the BCF expansion}\label{BCF1}
%We no longer deal with general non-uniformly expanding Markov maps,
%freeing up the use of the letter $f$ to denote this map.
%  For uniformly expanding Markov maps with finitely many branches, the Birkhoff spectra of H\"older continuous observables
%which are not cohomologous to a constant are non-constant real-analytic functions \cite{Pes97,PesWei01}.
% For uniformly expanding Markov maps with infinitely many branches, 
% Fan et al. \cite[Theorem 7.2]{FanJorLiaRam16} constructed
% an example of a bounded continuous observable with a locally flat spectrum, in which %the spectrum on the flat part is constantly equal to a constant in $(0,1)$.

In this last section,
let $f\colon[0,1)\to[0,1)$
 be the R\'enyi map
with a Markov partition $\{\varDelta_i\}_{i=1}^\infty$,
$\varDelta_i=[1-\frac{1}{i},1-\frac{1}{i+1})$.
We prove the two theorems in the introduction
on mixed Birkhoff spectra for the R\'enyi map.

\subsection{Proof of Theorem \ref{degenerate}}
We need a couple of lemmas.
%We need a conditional variational formula for the Lyapunov spectrum of the R\'enyi map.
\begin{lemma}\label{lalpha}
  There exists an ergodic expanding measure with dimension arbitrarily close to $1$.
  In particular, $f$ is saturated.
  Moreover,
 for every $\alpha\in(0,\infty)$,
   \begin{equation*}
   \dim_H L(\alpha)=
  \lim_{\epsilon\to0}\sup\left\{\dim(\mu)\colon \mu\in{\mathcal M}(f),\ \left|\chi(\mu)-\alpha\right|<\epsilon\right\}.\end{equation*}
\end{lemma}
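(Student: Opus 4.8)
The statement of Lemma \ref{lalpha} has three assertions about the R\'enyi map $f$: (1) there is an ergodic expanding measure with dimension arbitrarily close to $1$, and hence $f$ is saturated; (2) for every $\alpha\in(0,\infty)$, $\dim_H L(\alpha)$ equals the conditional variational formula involving the Lyapunov exponent. The plan is to obtain all of these as instances of the machinery already developed, by verifying that $f$ satisfies the hypotheses of Proposition \ref{verify} and then specializing the Main Theorem.

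First I would set up the inducing scheme. The R\'enyi map has a single neutral fixed point at $x=0$ sitting in $\varDelta_1=[0,\tfrac12)$, and all other branches are expanding with $|f'|>1$; one checks directly that each branch is $C^2$, that R\'enyi's condition $\sup_a\sup_{\varDelta_a}|f_a''|/|f_a'|^2<\infty$ holds (using the explicit formula $f(x)=\tfrac1{1-x}-\lfloor\tfrac1{1-x}\rfloor$), and that (M3) holds. Then I take $F=\{1^n a\colon n\ge 0,\ a\ge 2\}$ (reduced words where a maximal block of $1$'s is followed by a non-$1$ symbol) with return time $\tau(\varDelta_{1^n a})=n+1$, so that the induced map $\tilde f$ is the jump transformation that skips the orbit segment spent near $0$. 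This $\tilde f$ is the classical uniformly expanding countable-branch Markov map (essentially a variant of the Gauss-type map), it is fully branched hence finitely irreducible, and assumption (iv) of Proposition \ref{verify} — that $\dim_H J=\dim_H\bigcap_{n\ge0}\tilde f^{-n}K$ — holds because the set of points whose orbit eventually stays in $\varDelta_1$ (i.e. converges to $0$) is countable, as follows from Lemma \ref{B} / the uniform decay of cylinders. Assumptions (i), (ii) are immediate since the only neutral index is $1$ and $\varDelta_1$ contains the neutral fixed point $0$. Applying Proposition \ref{verify}(b) gives ergodic expanding measures in $\mathcal M(f)$ with dimension arbitrarily close to $\dim_H J$; since $\dim_H[0,1)=1$ and these measures have dimension $\le 1$, while $E(n)\nearrow$ has dimension approaching... — more directly, the Gauss-type induced map $\tilde f$ has $\delta=1$ (its limit set is all of $[0,1)$ up to a countable set), so $\delta_0=\delta=1$, proving (1) and the saturation $\dim_H J=\delta_0=1$.

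For assertion (2), I would specialize the Main Theorem to the single observable $\phi_1=\log|f'|$ (with $k=1$, $\boldsymbol\phi=(\log|f'|,0,0,\dots)$ say, or just invoke the single-observable case directly). Note $\log|f'|\in\mathcal F$ by Lemma \ref{mild} since $f$ has uniform decay of cylinders and satisfies R\'enyi's condition. For $\alpha\in(0,\infty)$ we have $L(\alpha)=B(\boldsymbol\phi,\boldsymbol\alpha)$ with $\alpha_1=\alpha$, and $B(\boldsymbol\phi,\boldsymbol\alpha)\neq\emptyset$: this follows from Main Theorem(a) together with the existence, for every $\alpha>0$, of invariant measures (e.g. convex combinations of a periodic measure with large Lyapunov exponent and the neutral fixed point measure, or of two periodic measures straddling $\alpha$) whose Lyapunov exponent is within $\epsilon$ of $\alpha$. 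Since $f$ is saturated, Main Theorem(b) then yields
\[
\dim_H L(\alpha)=\lim_{\epsilon\to0}\sup\{\dim(\mu)\colon\mu\in\mathcal M(f),\ |\chi(\mu)-\alpha|<\epsilon\},
\]
where the outer $\lim_{k\to\infty}$ is trivial since only $\phi_1$ is relevant; this is exactly the claimed formula.

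The main obstacle I anticipate is the verification of assumption (iv) of Proposition \ref{verify}, i.e. that passing to the induced map does not lose Hausdorff dimension. The concern is the set of points that are "captured by the cusp," namely those whose forward orbit under $f$ eventually lands in $\varDelta_1$ forever (hence is not in the domain of infinitely many iterates of $\tilde f$). One must argue this set is countable — or at least of dimension strictly below $\dim_H J=1$, which suffices — and the cleanest route is Lemma \ref{B}: since $f$ has uniform decay of cylinders (guaranteed by (M3) via Lemma \ref{uniform}), $J\setminus J^*$ is countable, and the orbits trapped at $0$ are contained in a set of this type. A secondary routine check is confirming that the induced map $\tilde f$ genuinely has critical exponent $\delta=1$; this is standard for Gauss-type maps but I would cite the relevant computation (the pressure of $\beta\tilde\psi$ at $\beta=1$ is nonnegative because Lebesgue measure, though not $f$-invariant, is $\tilde f$-quasi-invariant, giving $\dim_H$ of the induced limit set equal to $1$) or invoke Ledrappier's result \cite{Led81b} combined with the absence of an a.c.i.p. for $f$ to pin down $\delta$ from above and below.
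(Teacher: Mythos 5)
Your proposal is correct and follows essentially the same route as the paper: verify R\'enyi's condition and (M3), feed an inducing scheme into Proposition \ref{verify} to get ergodic expanding measures of dimension close to $\dim_HJ=1$ (hence saturation), and then read off the formula for $\dim_HL(\alpha)$ from Main Theorem(b) applied to the observable $\log|f'|$. The only difference is cosmetic — the paper induces on the first return map to $(1/2,1)$ rather than your jump transformation over the blocks $1^na$ — and you additionally spell out the non-emptiness of $L(\alpha)$ and the countability argument for hypothesis (iv), which the paper leaves implicit.
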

\begin{proof}
Clearly, $f$ satisfies (M3).
  A direct calculation shows that $f$ satisfies R\'enyi's condition, and so $\log|f'|\in\mathcal F$ by Lemma \ref{mild}.
        It is easy to see that the first return map to the interval $(1/2,1)$ is a fully branched Markov map which satisfies all the assumptions in Proposition \ref{verify}. Hence, the first assertion of Lemma \ref{lalpha}.
The second one is a consequence of Main Theorem(b).
\end{proof}

\begin{lemma}\label{seq}
There exists a sequence 
$\{\mu_p\}_{p=1}^\infty$ of expanding measures such that 
$$\lim_{p\to\infty}\dim(\mu_p)=1\ \text{ and }\ \lim_{p\to\infty}\chi(\mu_p)=0,$$
$$\int b_1d\mu_p<\infty\quad\text{for every $p\geq1$}\ \text{ and }\
\lim_{p\to\infty}\int b_1d\mu_p=\infty.$$ 

\end{lemma}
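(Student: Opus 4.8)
The plan is to realize each $\mu_p$ as a convex combination of three explicit $f$-invariant measures whose entropies and Lyapunov exponents I can control directly, using only Lemma \ref{lalpha}, Lemma \ref{katok} and the gluing argument already used in the proof of Lemma \ref{ab}.

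First I would fix, for each $p$, an ergodic expanding measure $\nu_p\in\mathcal M(f)$ supported on a finite union of cylinders with $\dim(\nu_p)\geq 1-1/p$; such $\nu_p$ exist by Lemma \ref{lalpha} together with the finite-approximation scheme of Lemma \ref{ab} (approximate a measure of almost full dimension by a finite family of cylinders via Lemma \ref{katok}, then glue orbits using finite irreducibility). Since $b_1$ is constant on each $\varDelta_i$, it is bounded on the support of $\nu_p$, so $\int b_1\,d\nu_p<\infty$ and $0<\chi(\nu_p)<\infty$. Second, let $\delta_0$ be the Dirac mass at the neutral fixed point $0\in\varDelta_1$; it lies in $\mathcal M(f)$ with $h(\delta_0)=\chi(\delta_0)=0$ and $b_1\equiv 2$ on $\varDelta_1$. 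Third, for $N\geq 2$ the branch of $f$ on $\varDelta_N=[1-\frac1N,1-\frac1{N+1})$ is a continuous increasing bijection onto $[0,1)$, so it has a fixed point $p_N\in\varDelta_N$ of $f$; then $\delta_{p_N}\in\mathcal M(f)$ with $h(\delta_{p_N})=0$, $b_1(p_N)=N+1$, and, from $N^2\leq|f'|<(N+1)^2$ on $\varDelta_N$, $2\log N\leq\chi(\delta_{p_N})<2\log(N+1)$. The single structural fact I will exploit is that $\chi(\delta_{p_N})=o(N)$ as $N\to\infty$: putting weight on $\delta_{p_N}$ inflates $\int b_1$ at only a logarithmic cost in Lyapunov exponent.

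Now set $\mu_p=a_p\nu_p+b_p\,\delta_{p_{N_p}}+(1-a_p-b_p)\,\delta_0$ and choose parameters in the order $a_p$, then $N_p$, then $b_p$. Since $h$ and $\chi(\cdot)=\int\log|f'|\,d(\cdot)$ are affine and vanish on $\delta_0$ and on $\delta_{p_{N_p}}$, we get $h(\mu_p)=a_ph(\nu_p)$, $\chi(\mu_p)=a_p\chi(\nu_p)+b_p\chi(\delta_{p_{N_p}})$, hence $\dim(\mu_p)=\dim(\nu_p)\big/\big(1+b_p\chi(\delta_{p_{N_p}})/(a_p\chi(\nu_p))\big)$, and $\int b_1\,d\mu_p=a_p\int b_1\,d\nu_p+b_p(N_p+1)+2(1-a_p-b_p)$. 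I would first take $a_p\le\frac12$ with $a_p\chi(\nu_p)=\min\{1/p,\chi(\nu_p)/2\}\to0$; then, using $\chi(\delta_{p_N})/N\to0$, choose $N_p$ so large that $\chi(\delta_{p_{N_p}})/N_p< a_p\chi(\nu_p)/p^2$ (this forces $N_p\to\infty$); finally set $b_p$ equal to the geometric mean of $p/N_p$ and $a_p\chi(\nu_p)/(p\,\chi(\delta_{p_{N_p}}))$. A routine check then gives $b_pN_p>p\to\infty$ (so $\int b_1\,d\mu_p\to\infty$, while $\int b_1\,d\mu_p<\infty$ for each fixed $p$), $b_p\chi(\delta_{p_{N_p}})/(a_p\chi(\nu_p))<1/p\to0$ (so $\chi(\mu_p)=a_p\chi(\nu_p)(1+o(1))\to0$ and $\dim(\mu_p)=\dim(\nu_p)/(1+o(1))\to1$), and $b_p\to0$, so $1-a_p-b_p\in(0,1)$ and $\mu_p$ is expanding because $\chi(\mu_p)\ge a_p\chi(\nu_p)>0$. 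The only real work is this bookkeeping — keeping the three targets (dimension $\to1$, Lyapunov exponent $\to0$, $\int b_1\to\infty$) simultaneously satisfiable — and its feasibility is exactly the inequality $\chi(\delta_{p_N})=o(N)$.
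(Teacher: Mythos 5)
Your proof is correct, and the core construction is the one the paper uses: produce ergodic expanding measures of dimension close to $1$ supported on finitely many cylinders (Lemma \ref{lalpha} plus the Lemma \ref{katok} approximation, exactly as in Lemma \ref{ab}), then perturb by the point mass at the fixed point $p_N\in\varDelta_N$, exploiting that $b_1(p_N)=N+1$ grows linearly while $\chi(\delta_{p_N})\asymp 2\log N$ grows only logarithmically. The paper takes the two-term mixture $\mu_p=(1-p^{-1/2})\xi_p+p^{-1/2}\delta_p$ with the cell index tied to $p$ and the extra normalization $\chi(\xi_p)\ge p^{-1/4}$, and this yields $\dim(\mu_p)\to1$ and $\int b_1\,d\mu_p\to\infty$ by the same bookkeeping as yours; but for the third conclusion, $\chi(\mu_p)\to0$, the paper argues by contradiction, invoking the strict monotonicity of the Lyapunov spectrum of the R\'enyi map (\cite[Theorem 4.2]{Iom10}) together with Main Theorem(b). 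Your route is genuinely different on this point and arguably cleaner: by adding the third component $\delta_0$ (zero entropy, zero Lyapunov exponent) you can scale $\chi(\mu_p)$ down to zero without touching the ratio $h/\chi$, so all three limits are forced directly by explicit parameter choices and no external result about the Lyapunov spectrum is needed. (This $\delta_0$-dilution is in fact the same trick the paper uses afterwards, in the proof of Theorem \ref{degenerate}, where mixing with $\delta_1$ preserves dimension.) The only blemish is cosmetic: for very small $p$ your bounds only give $b_p<1/(2p^2\log 2)$ and $a_p\le 1/2$, so $a_p+b_p$ could exceed $1$ at $p=1$; since the lemma is a statement about limits, this is repaired by strengthening the choice of $N_p$ (e.g.\ requiring $\chi(\delta_{p_{N_p}})/N_p<a_p\chi(\nu_p)/p^3$ and $N_p\ge p$) or by discarding finitely many initial terms.
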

\begin{proof}
 By Lemma \ref{lalpha}, there exists an ergodic expanding measure with dimension
arbitrarily close to $1$. Since such measures are approximated by a finite collection of cylinders 
in the sense of Lemma \ref{katok},
it is possible to take a sequence $\{\xi_p\}_{p=1}^\infty$ of expanding measures
  such that $\chi(\xi_p)\geq p^{-1/4}$
and $\int b_1d\xi_p<\infty$ hold for every $p\geq1$
and $\lim_{p\to\infty}\dim(\xi_p)=1$.
Let $\delta_p$ denote the unit point mass at the fixed point of $f$ in $\varDelta_p$,
and put
$$\mu_p=\left(1-\frac{1}{\sqrt{p}}\right)\xi_p+\frac{1}{\sqrt{p}}\delta_{p}.$$
Then we have $h(\delta_p)=0$,
$\chi(\delta_p)\leq 2\log(p+1)$ and 
\[\int b_1d\mu_p\geq\frac{1}{\sqrt{p}} \int b_1d\delta_p=\frac{p+1}{\sqrt{p}}\rightarrow \infty,\text{ as }p\rightarrow \infty.\]
Moreover,
for any $c>1$ there exists $p_0\geq1$ such that 
$\chi(\mu_p)\leq c\chi(\xi_p)$, for every $p\geq p_0$. Hence
$$\dim(\mu_p)\geq\frac{\left(1-\frac{1}{\sqrt{p}}\right)h(\xi_p)}{c\chi(\xi_p)},$$
which implies
$\liminf_{p\to\infty}\dim(\mu_p)\geq 1/c$. Letting $c\to 1$ yields
$\lim_{p\to\infty}\dim(\mu_p)=1$.

If $\limsup_{p\to\infty}\chi(\mu_p)>0$, then from Main Theorem(b) it follows that
$\dim_HL(\alpha)=1$ for $\alpha=\limsup_{p\to\infty}\chi(\mu_p)$.
This contradicts the fact that the Lyapunov spectrum $\alpha\in[0,\infty)\mapsto\dim_HL(\alpha)$
is strictly monotone decreasing \cite[Theorem 4.2]{Iom10}.
\end{proof}

%\begin{remark}
%There is no compact set which contains the support of $\mu_k$ for every $k\geq1$.
%\end{remark}

Let $\{\mu_p\}_{p=1}^\infty$ be a sequence of expanding measures as in Lemma \ref{seq}. 
Let $\alpha\in[2,\infty)$.
For each integer $j\geq1$ fix $p(j)\geq1$ such that
$$\int b_1d\mu_{p(j)}>\alpha+j.$$
Define $t_j\in(0,1]$ implicitly by
$$t_j\int b_1d\mu_{p(j)}+(1-t_j)\int b_1d\delta_1=\alpha+\frac{1}{j}\in\left(2,\alpha+j\right],$$
and put $$\nu_{j}=t_j\mu_{p(j)}+(1-t_j)\delta_1.$$ 
Then $\nu_j$ is an expanding measure and satisfies
$\int b_1d\nu_j=\alpha+1/j$.
Note that we have $t_j\to0$ as  $j\to\infty$.

Let $\boldsymbol\phi=(\phi_i)_{i=1}^\infty\in\mathcal F^{\mathbb N}$ be such that $\phi_i$ is bounded (not necessarily continuous) for every $i\geq1$. Put $\boldsymbol\alpha=(\phi_i(0))_{i=1}^\infty\in\mathbb R^{\mathbb N}$.
By a diagonal argument, one can choose a subsequence $\{\nu_{j_\ell}\}_{\ell=1}^\infty$
of $\{\nu_j\}_{j=1}^\infty$ such that
 $\lim_{\ell\to\infty}\left\|\int \boldsymbol\phi_k d\nu_{j_\ell}-\boldsymbol\phi_k(0)\right\|=0$ for each $k\geq1$.
%and thus $\int\1_{(0,1/2)}d\nu_j\to 1$,
%where $\1_{(0,1/2)}$ denotes the indicator function 
 %    of the interval $(0,1/2)$.
Since $h(\delta_1)=0=\chi(\delta_1)$, 
$\dim(\nu_j)=\dim(\mu_{p(j)})$ holds.
For each $k\geq1$,
Proposition \ref{lowd} yields \begin{align*}\lim_{\epsilon\to0}\sup&\left\{\dim(\mu)\colon
\mu\in\mathcal M(f),\
\left|\int\boldsymbol\phi_kd\mu-\boldsymbol\alpha_k\right|<\epsilon,\left|\int b_1d\mu-\alpha\right|<\epsilon\right\}\\
&\geq\lim_{\ell\to\infty}\dim(\nu_{j_\ell})=\lim_{\ell\to\infty}\dim(\mu_{p(j_\ell)})=1.\end{align*}
Since $k\geq1$ is arbitrary, the Hausdorff dimension of the level set is bounded from below by $1$, and so it is $1$.
In the case $\alpha=\infty$,
for each $p\geq2$ define an expanding measure $\nu_{p}=(1-\frac{1}{p})\mu_{p}+\frac{1}{p}\delta_1.$
From Lemma \ref{seq}, %$\lim_{k\to\infty}\int\1_{(0,1/2)}d\nu_k=1$ and
$\lim_{p\to\infty}\int b_1d\nu_p=\infty$.
A slight modification of the proof of Proposition \ref{lowd} and the same reasoning in the case 
where $\alpha$ is finite
shows that the Hausdorff dimension of the level set is bounded from below by
$\lim_{p\to\infty}\dim(\nu_{p})=\lim_{p\to\infty}\dim(\mu_{p})=1.$
This completes the proof of Theorem \ref{degenerate}.
 %and using Lemma \ref{seq}.
\qed

%  \begin{remark}\label{unifdim}
% If the map $f$ in Theorem \ref{dimthm} is uniformly expanding, then $\dim_HBE(\boldsymbol\alpha)$
%  is continuous on the set of all frequency vectors.
%This follows from a slight modification of the proof of Theorem \ref{dimthm}(a).
%  \end{remark}
  
% \subsection{Irregular sets with no Birkhoff average}\label{irregular}
%Recall that $B'(\boldsymbol\phi)$ in \eqref{decomp} is the set of points in $J$ for which the Birkhoff average of some $\phi_i$ does not exist.
%If $B'(\boldsymbol\phi)\neq\emptyset$ and \eqref{dim} holds, then
%using arguments in \cite{BarSch00} one can show that $\dim_HB'(\boldsymbol\phi)=\dim_HJ$. \textcolor{red}{ (Should be removed for shortening?)}

\subsection{Proof of Theorem \ref{dimthm}}\label{pthmb}
%For each $j$ let $\delta_j$ denote the $f$-invariant measure supported on 
%the orbit of $x_j$. By \eqref{non-empty} and the transitivity of $f$,
%there exists an expanding measure $\mu\in\mathcal M(f)$
%such that $\mu(A_j)=0$ holds for every $1\leq j\leq k$.
%For each $1\leq j\leq k$ let $\delta_j$ denote the $f$-invariant measure supported on 
%the fixed point in  $\varDelta_j$. Fix an expanding measure $\mu\in\mathcal M(f)$
%such that $\mu(\bigcup_{j=1}^k\varDelta_j)=0$ holds.
%Using convex combinations of the measures $\delta_1,\ldots,\delta_k$, $\mu$,
%one can show that the condition in Main Theorem(a) holds for every frequency vector
%$\boldsymbol\alpha\in \mathbb R^k$, and thus $BE_{\mathscr{A}}(\boldsymbol\alpha)\neq\emptyset$.
The dimension formula in the first assertion is a consequence of Main Theorem(b).
We prove the second assertion.
 From Lemma \ref{lalpha} and $\lim_{\alpha\to\infty}\dim_HL(\alpha)=1/2$ as in
    \cite[p.219]{Iom10} we have
 \begin{equation}\label{atinfty}
   \limsup_{\alpha\to\infty}\sup\left\{\dim(\mu)\colon\mu\in\mathcal M(f),\ \chi(\mu)=\alpha\right\}\leq\frac{1}{2}.
   \end{equation}
   %\begin{proof} 
%For any $\beta>\beta_\infty$ we have   
%\begin{align*}
%&\alpha \cdot\sup\left\{\dim(\mu)\colon \mu\in{\mathcal M}(f),\ \chi(\mu)=\alpha\right\}\\
%&\leq \sup\left\{h(\mu)-\beta\chi(\mu)\colon\mu\in\mathcal M(f), \ \chi(\mu)=\alpha\right\}+\alpha\beta\\
%&\leq \sup\{h(\mu)-\beta\chi(\mu)\colon\mu\in\mathcal M(f)\}+\alpha\beta<\infty.
%\end{align*}
%Dividing both sides by $\alpha$, and letting $\alpha\to\infty$
%and then $\beta\to\beta_\infty$ yields the desired inequality.
%\end{proof}  
%Changing the indices of the Markov partition 
% if necessary, we may assume
%that $\inf_{\varDelta_i} |f'|$ is monotone increasing in $i$.
%Since $f$ has mild distortion, 
%we have $D_1(\log|f'|)<\infty$ and thus
%\begin{equation}\label{diverge}
%\lim_{i\to\infty}\inf_{\varDelta_i} |f'|=\infty.\end{equation}
Let $\boldsymbol\alpha\in\mathbb R^{\mathbb N}$ be a frequency vector
such that $\sum_{i=1}^\infty\alpha_i<1$. 
%Let $\delta>0$.
%By Main Theorem(b),
%there exists $k(\delta)\geq1$ such %that for every $k\geq k(\delta)$,
%$$\dim_HBE(\boldsymbol\alpha)\leq\de%lta+
%   \lim_{\epsilon\to0}\max\left\{\sup\left\{\dim(\mu)\colon \mu\in\mathcal M(f),\
%   \max_{1\leq i\leq k}\left|\mu(\varDelta_i) -\alpha_i\right|<\epsilon\right\},\beta_\infty\right\}.$$   
Fix $\epsilon_0\in(0,1-\sum_{i=1}^\infty\alpha_i)$ and
 put $c=1-\sum_{i=1}^\infty\alpha_i-\epsilon_0>0.$
  For each integer $k\geq 1$ take $\epsilon(k)\in(0,\epsilon_0/k)$.
  Then
  \begin{equation}\label{sev}
  \begin{split}\lim_{\epsilon\to0}&\sup\left\{\dim(\mu)\colon \mu\in\mathcal M(f),\
   \max_{1\leq i\leq k}\left|\mu(\varDelta_i) -\alpha_i\right|<\epsilon\right\}\\
   &\leq
   \sup\left\{\dim(\mu)\colon \mu\in\mathcal M(f),\
   \max_{1\leq i\leq k}\left|\mu(\varDelta_i) -\alpha_i\right|<\epsilon(k)\right\}\\
   &\leq \sup\left\{\dim(\mu)\colon \mu\in\mathcal M(f),\
   \sum_{i=k+1}^\infty\mu(\varDelta_i)>c\right\},
   \end{split}
   \end{equation}
   where the last inequality follows from
   $$1-\sum_{i=1}^k\alpha_i-k\epsilon(k)>1-\sum_{i=1}^k
   \alpha_i-\epsilon_0\geq c>0.$$
%The assertion in   \eqref{diverge} implies
Since $\lim_{i\to\infty}\inf_{\varDelta_i} |f'|=\infty$ we have
   \begin{equation}\label{equat}\lim_{k\to\infty}\inf\left\{\chi(\mu)\colon\mu\in\mathcal M(f),\ \sum_{i=k+1}^\infty\mu(\varDelta_i)>c\right\}=\infty.\end{equation}
 By \eqref{atinfty} and \eqref{equat}, letting $k\to\infty$ in \eqref{sev} proves
\begin{equation*} \label{eq:binfty}
   \lim_{k\to \infty}\lim_{\epsilon\to0}\sup\left\{\dim(\mu)\colon \mu\in\mathcal M(f),\
   \max_{1\leq i\leq k}\left|\mu(\varDelta_i) -\alpha_i\right|<\epsilon\right\}\le \frac{1}{2}.
   \end{equation*}
   This inequality and
  the dimension formula together imply $\dim_HBE(\boldsymbol\alpha)=1/2$.\qed

\subsection*{Acknowledgments}
The first-named author was partially supported by the JSPS KAKENHI 17K14203. 
The second-named author was partially supported by the JSPS KAKENHI 
19K21835, 20H01811.

\end{document}